\title[Equivalence between coefficient systems and systems of idempotents]{Equivalence of categories between coefficient systems and systems of idempotents}
\author{Thomas Lanard}
\email{thomas.lanard@univie.ac.at}
\begin{document}

\begin{abstract}
The consistent systems of idempotents of Meyer and Solleveld allow to construct Serre subcategories of $\rep[R]{G}$, the category of smooth representations of a $p$-adic group $G$ with coefficients in $R$. In particular, they were used to construct level 0 decompositions when $R=\Zl$, $\lprime \neq p$, by Dat for $\gl{n}$ and the author for a more general group. Wang proved in the case of $\gl{n}$ that the subcategory associated with a system of idempotents is equivalent to a category of coefficient systems on the Bruhat-Tits building. This result was used by Dat to prove an equivalence between an arbitrary level zero block of $\gl{n}$ and a unipotent block of another group. In this paper, we generalize Wang's equivalence of category to a connected reductive group on a non-archimedean local field.
\end{abstract}

\maketitle

\section*{Introduction}

Let $\kk$ be a non-archimedean local field and $G$ the $\kk$-points of a connected reductive group over $\kk$. Let $p$ be the characteristic of the residue field of $\kk$. Denote by $\bt$ the semi-simple Bruhat-Tits building of $G$. The building is a polysimplicial complex, partially ordered by the order relation $\sigma \leq \tau$ if $\sigma$ is a face of $\tau$.

Let $R$ be a commutative ring in which $p$ is invertible. A coefficient system $\Gamma$ on $\bt$ with coefficients in $R$ is a contravariant functor from the category $(\bt,\leq)$ to the category of $R$-modules. In other words, this is the data of $R$-modules $(V_{\sigma})_{\sigma \in \bt}$ and $R$-morphisms $\varphi_{\tau}^{\sigma}:V_{\sigma} \rightarrow V_{\tau}$ if $\tau \leq \sigma$, such that $\varphi_{\sigma}^{\sigma} = \Id$, and $\varphi_{\omega}^{\tau} \circ \varphi_{\tau}^{\sigma} = \varphi_{\omega}^{\sigma}$ if $\omega \leq \tau \leq \sigma$. A coefficient system is said to be $G$-equivariant if for every $g \in G$ and for every $\sigma \in \bt$ there is an isomorphism $\alpha_{g,\sigma}: V_{\sigma} \rightarrow V_{g\sigma}$ compatible with the $\varphi_{\tau}^{\sigma}$ and such that $\alpha_{1,\sigma}=\Id$, $\alpha_{g,h\sigma} \circ \alpha_{h,\sigma} = \alpha_{gh,\sigma}$. To $\Gamma$ we associate a graduated cellular chain complex $C_{*}(\bt,\Gamma)=\bigoplus_{\sigma \in \bt} V_\sigma$ (see Section \ref{secsystdecoeff} for more details and the definition of the differential). The homology of $C_{*}(\bt,\Gamma)$ is then noted $H_{*}(\bt,\Gamma)$.

\medskip

When $R=\mathbb{C}$, a fundamental example of coefficient systems is given by the work of Schneider and Stuhler \cite{SchneiderStuhler}. Let $\sigma \in \bt$ and let $\para{G}{\sigma}$ be the parahoric subgroup of $G$ at $\sigma$. Schneider and Stuhler then constructed a filtration of $\para{G}{\sigma}$ by compact open subgroups $\para{G}{\sigma} \supseteq U_{\sigma}^{(0)} \supseteq U_{\sigma}^{(1)} \supseteq \cdots \supseteq U_{\sigma}^{(r)} \supseteq \cdots$. Then, they associate to $r\in \mathbb{N}$ and $V$ a smooth $RG$-module, the $G$-equivariant coefficient system $\sigma \mapsto V^{U_{\sigma}^{(r)}}$. If $V$ is genertated by $V^{U_{x}^{(r)}}$ for some vertex $x$, then the chain complex of this coefficient system provides a projective resolution of $V$.

\medskip

Complex representations of $p$-adic groups were first studied. Then, in order to extend the local Langlands programme to representations with coefficients in a field, or a ring, as general as possible, it appeared interesting to study $R$-representations. This was first initiated by Vignéras in \cite{vignerasmod}. She extended the natural projective resolutions of Schneider and Stuhler to representations on vector spaces over fields of characteristic not equal to $p$ in \cite{vignsheves}.

\medskip

Meyer and Solleveld generalize these process in \cite{meyer_resolutions_2010}. Denote by $\mathcal{H}_{R}(G)$ the Hecke algebra of $G$ with coefficients in $R$. Let $\bts$ be the set of vertices in $\bt$, that is the set of dimension 0 polysimplices.  Let $e=(e_{\sigma})_{\sigma\in \bt}$ be a system of idempotents of $\mathcal{H}_{R}(G)$ satisfying properties of consistency (see \cite[Def 2.1]{meyer_resolutions_2010} or Definition \ref{defcoherent}). Then the functor $\sigma \to e_{\sigma}(V)$ provides a $G$-equivariant coefficient system. They then showed that if we denote by $\rep[R]{G}$ the abelian category of smooth $G$ representations with coefficients in $R$ and $\rep[R][e]{G}$ the full subcategory of objects $V$ in $\rep[R]{G}$ such that $V=\sum_{x\in \bts}e_{x}V$, then $\rep[R][e]{G}$ is a Serre subcategory (\cite[Thm 3.1]{meyer_resolutions_2010}). We recall that a Serre subcategory is a full subcategory $\mathfrak{S}$ of an Abelian category $\mathfrak{A}$ such that for every exact sequence $0 \to A \to B \to C$ in $\mathfrak{A}$ we have that $B \in \mathfrak{S}$ if and only if $A \in \mathfrak{S}$ and $C \in \mathfrak{S}$.

\medskip

This result allowed Dat in \cite{dat_equivalences_2014} to reconstruct the depth 0 blocks of $\gl{n}(F)$ for $R=\Zl$. Subsequently, decompositions on $\Zl$ were obtained for $G$ which splits over an unramified extension of $\kk$ in \cite{lanard} and \cite{lanard2}.

\medskip

Let $e$ be a consistent system of idempotents. We also assume that if $x$ is a vertex of $\sigma$ then $e_\sigma \in \hecke{\fix{G}{x}}{R}$, where $\fix{G}{x}$ is the pointwise stabilizer of $x$ in $G$. A $e$-coefficient system is a $G$-equivariant coefficient system such that $\varphi_{x}^{\sigma} : V_{\sigma} \rightarrow V_{x}$ induces an isomorphism $V_{\sigma} \overset{\backsim}{\rightarrow} e_{\sigma}(V_{x})$. We denote by $\coef$ the category of $e$-coefficient systems.

\begin{The*}
Let $\Gamma$ be a $e$-coefficient system on $\bt$. Then, the chain complex $C_{*}(\bt,\Gamma)$ is exact except in degree 0.
\end{The*}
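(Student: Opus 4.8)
The plan is to reduce the assertion to the coefficient systems that come from representations, for which it is the resolution theorem of Schneider--Stuhler and its generalisations. For $W \in \rep[R][e]{G}$ let $\Gamma_W$ denote the $e$-coefficient system $\sigma \mapsto e_\sigma W$ with transition maps $\varphi_\tau^\sigma \colon e_\sigma W \to e_\tau W$, $w \mapsto e_\tau w$; the coherence relations among the $e_\sigma$ make this a well-defined $e$-coefficient system, and because $W = \sum_{x \in \bts} e_x W$ the augmented complex $C_*(\bt,\Gamma_W) \to W \to 0$ is exact by the generalisation to $\rep[R][e]{G}$ of the resolution theorem of Schneider--Stuhler \cite{SchneiderStuhler} (see \cite{vignsheves,meyer_resolutions_2010}). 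In particular $C_*(\bt,\Gamma_W)$ is exact outside degree $0$. So it is enough to prove the following.

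\emph{Claim.} Every $e$-coefficient system $\Gamma$ is isomorphic to $\Gamma_V$, where $V := H_0(\bt,\Gamma)$ is equipped with the $G$-action induced by the $G$-equivariant structure of $\Gamma$ on $C_0(\bt,\Gamma) = \bigoplus_{x \in \bts} V_x$.

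To prove the claim I would first check that $V = H_0(\bt,\Gamma)$ lies in $\rep[R][e]{G}$: it is smooth because each $V_x = e_x V_x$ is a smooth $R[\fix{G}{x}]$-module (the idempotent $e_x \in \hecke{\fix{G}{x}}{R}$ being bi-invariant under a compact open subgroup), and $V = \sum_x e_x V$ because the images of the $V_x$ span $V$ and each lands in $e_x V$. Next, for a vertex $x$ the composite $\psi_x \colon V_x \hookrightarrow C_0(\bt,\Gamma) \twoheadrightarrow V$ is $\fix{G}{x}$-equivariant, and since $e_x$ acts as the identity on $V_x$ its image is contained in $e_x V$; for a general $\sigma$ with vertex $x$ one sets $\psi_\sigma \colon V_\sigma \xrightarrow{\ \varphi_x^\sigma\ } e_\sigma V_x \xrightarrow{\ \psi_x\ } e_\sigma(e_x V) = e_\sigma V$, which is independent of the chosen vertex because the composite $V_\sigma \to V_x \hookrightarrow C_0(\bt,\Gamma) \to H_0(\bt,\Gamma)$ changes only by a boundary when $x$ is replaced by an adjacent vertex of $\overline\sigma$. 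One verifies that $\psi = (\psi_\sigma)_\sigma$ is a morphism of $G$-equivariant coefficient systems $\Gamma \to \Gamma_V$, and that $\psi$ is an isomorphism as soon as every $\psi_x$ ($x$ a vertex) is an isomorphism onto $e_x V$; granting that, $\psi$ induces an isomorphism of chain complexes $C_*(\bt,\Gamma) \cong C_*(\bt,\Gamma_V)$ and the theorem follows from the first paragraph.

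The heart of the matter, and the step I expect to be the main obstacle, is thus the bijectivity of $\psi_x \colon V_x \to e_x V$. Since the $G$-action on $C_*(\bt,\Gamma)$ commutes with the differential, $e_x V = e_x H_0(\bt,\Gamma) = H_0(e_x C_*(\bt,\Gamma))$ where $e_x C_n(\bt,\Gamma) = \bigoplus_{\dim \sigma = n} e_x V_\sigma$, and surjectivity (resp.\ injectivity) of $\psi_x$ amounts to saying that every class in this homology group is represented by an element of the summand $V_x$ (resp.\ that such a representative is unique). I would establish this by reducing to a finite-dimensional contractible piece of the building: a chain with finite support and smooth stalk values is fixed by a compact open subgroup $K$, which one may shrink so that it fixes the support pointwise, whence the chain lies in $C_*(\bt^K,\Gamma|_{\bt^K})$, and $\bt^K$ is non-empty and convex --- hence contractible --- by the Bruhat--Tits fixed point theorem. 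On $\bt^K$ one then propagates along geodesics towards $x$, the compatibility at each step being supplied by the defining isomorphisms $V_\tau \xrightarrow{\sim} e_\tau V_x$ of an $e$-coefficient system together with the relations $e_\tau e_\sigma = e_\sigma$ for $\tau \le \sigma$; this is where the coherence of $(e_\sigma)_\sigma$ enters essentially. (Alternatively one can dispense with the claim and run directly, on $C_*(\bt^K,\Gamma|_{\bt^K})$ for an arbitrary $e$-coefficient system, the contracting-homotopy argument of Schneider--Stuhler and Meyer--Solleveld, the only extra ingredient over the classical case being the acyclicity of the induced coefficient systems on the links of $\bt^K$, which are Tits buildings of finite reductive groups.)
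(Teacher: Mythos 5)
Your global strategy is legitimate and is in fact the alternative route the author explicitly flags in the introduction: if one knows that every $e$-coefficient system $\Gamma$ is isomorphic to $\sigma\mapsto e_\sigma(H_0(\bt,\Gamma))$ for some $V=H_0(\bt,\Gamma)\in\rep[R][e]{G}$ (the paper's Theorem \ref{theEsssurj}), then acyclicity outside degree $0$ follows from the Meyer--Solleveld resolution theorem \cite[Thm.\ 2.4]{meyer_resolutions_2010} applied to $V$, exactly as you argue, and there is no circularity. The problem is that you have not proved the Claim; you have only relocated the entire difficulty of the paper into it and then sketched it away. The step ``on $\bt^K$ one then propagates along geodesics towards $x$'' presupposes well-defined transport maps $V_\tau\to V_x$ for non-adjacent $\tau$ and $x$, obtained by composing the elementary maps $p^\tau_\sigma$ and $\varphi^\sigma_\tau$ along a chain of polysimplices. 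That such a composite is independent of the chosen chain is precisely Proposition \ref{proIndependanceChemin}, whose proof occupies Sections \ref{secAdmpaths}--\ref{secapplocales} and is the technical core of the paper (for a general reductive group the polysimplicial hull $H(\tau,\sigma)$ is not controlled by taut paths as in the $\gl{n}$ case of \cite{Wanga}, which is why the author introduces admissible paths). Your sketch gives no argument for this independence, and without it neither the surjectivity of $\psi_x$ nor its well-definedness on classes can be established.

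The injectivity of $\psi_x\colon V_x\to e_xH_0(\bt,\Gamma)$ is a second genuine gap: it amounts to showing that a nonzero element of $V_x$ cannot become a boundary, and in the paper this is Proposition \ref{proinjisigma}, which rests on transplanting the consistency relations onto the derived idempotents $e_x^\Sigma\in\End(H_0(\Sigma,\Gamma))$ (Proposition \ref{prosystconst}) and running the Meyer--Solleveld decomposition of \cite[Thm.\ 2.12]{meyer_resolutions_2010} on $H_0(\Sigma,\Gamma)$ for finite convex $\Sigma$. Nothing in your outline supplies this. Your parenthetical fallback (run the contracting-homotopy argument directly on $C_*(\bt^K,\Gamma|_{\bt^K})$) is likewise only an appeal to authority: the paper's actual proof does proceed by exhausting $\bt$ by finite convex subcomplexes and imitating Meyer--Solleveld, but the imitation is only possible once the local maps $\varepsilon^\tau_\sigma$ and their composition law (Proposition \ref{proEpsilonCompo}) are available, and the base case of a single polysimplex requires a separate idempotent decomposition of $V_\sigma$. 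In short: the reduction is sound, but the statement you reduce to is not easier than the theorem, and the two points you would need to make the reduction work --- path-independence of the transport maps and injectivity of $V_x\to H_0(\bt,\Gamma)$ --- are exactly what the paper spends most of its length proving.
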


For $V$ a smooth $R$-representation of $G$, let $\Gamma(V)$ be the $e$-coefficient system defined by $V_\sigma=e_\sigma(V)$. We then prove the following theorem

\begin{The*}
The functor
\[\begin{array}{ccc}
 \rep[R][e]{G} & \to & \coef \\
 V & \mapsto & \Gamma(V) \\
\end{array}\]
admits a quasi-inverse $\Gamma \mapsto H_0(\bt,\Gamma)$, so induces an equivalence of categories.
\end{The*}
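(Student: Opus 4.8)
The plan is to show the two functors $V \mapsto \Gamma(V)$ and $\Gamma \mapsto H_0(\bt,\Gamma)$ are mutually quasi-inverse, using the exactness theorem (the first displayed Theorem) as the crucial input. First I would check that both functors are well-defined: $\Gamma(V)$ is a $e$-coefficient system by the hypotheses on $e$ (the parahoric-support condition ensures $\varphi_x^\sigma : e_\sigma V \to e_x V$ is the inclusion, which is an iso onto $e_\sigma(e_x V) = e_\sigma V$ since $e_\sigma \in \hecke{\fix{G}{x}}{R}$ and $e_\sigma e_x = e_\sigma$); and for $\Gamma$ a $e$-coefficient system, $H_0(\bt,\Gamma) = \mathrm{coker}(C_1(\bt,\Gamma) \to C_0(\bt,\Gamma))$ carries a smooth $G$-action since the whole chain complex is $G$-equivariant, and one must verify $H_0(\bt,\Gamma) \in \rep[R][e]{G}$, i.e. $H_0 = \sum_{x\in\bts} e_x H_0$ — this holds because $C_0 = \bigoplus_{x\in\bts} V_x = \bigoplus_{x\in\bts} e_x V_x$ already has this property and it passes to the quotient.

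Next, the two natural isomorphisms. For the composite $V \mapsto \Gamma(V) \mapsto H_0(\bt,\Gamma(V))$: the chain complex $C_*(\bt,\Gamma(V))$ is, up to the degree shift, exactly the Meyer–Solleveld / Schneider–Stuhler style complex $\cdots \to \bigoplus_{\dim\sigma = 1} e_\sigma V \to \bigoplus_{x \in \bts} e_x V \to 0$, which is known (this is essentially \cite[Thm 3.1]{meyer_resolutions_2010}, or can be re-derived) to be a resolution of $V$ when $V \in \rep[R][e]{G}$; hence $H_0(\bt,\Gamma(V)) \cong \operatorname{coker}(C_1 \to C_0) \cong V$, naturally in $V$. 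Concretely the augmentation $C_0(\bt,\Gamma(V)) = \bigoplus_{x} e_x V \to V$, $(v_x)_x \mapsto \sum_x v_x$, is surjective because $V = \sum_x e_x V$, and its kernel is the image of the differential — this last point is precisely the content one needs from the exactness statement applied to $\Gamma(V)$, combined with the standard computation that $H_0$ of the full building complex with these coefficients recovers $V$.

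For the other composite $\Gamma \mapsto H_0(\bt,\Gamma) \mapsto \Gamma(H_0(\bt,\Gamma))$ I would build a natural morphism of coefficient systems $\Gamma \to \Gamma(H_0(\bt,\Gamma))$ and show each component $V_\sigma \to e_\sigma H_0(\bt,\Gamma)$ is an isomorphism. The map is induced by $\varphi_x^\sigma$ followed by the quotient $C_0 \twoheadrightarrow H_0$ for any vertex $x \le \sigma$ (independence of $x$ uses the cocycle relations); since $\Gamma$ is a $e$-coefficient system, $V_\sigma \cong e_\sigma V_x$ and one checks the image lands in $e_\sigma H_0$ and that the resulting map is bijective by a diagram chase in the chain complex — surjectivity is immediate from $H_0$ being a quotient of $C_0 = \bigoplus_x V_x$ together with $e_\sigma(V_x) = V_\sigma$, while injectivity again invokes exactness in positive degrees to identify the relevant kernel with the image of $C_1$. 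I expect the main obstacle to be exactly this injectivity/bijectivity verification: controlling $e_\sigma H_0(\bt,\Gamma)$ for non-vertex simplices $\sigma$ requires knowing that applying $e_\sigma$ to the (co)kernel computation is compatible with the local structure of the building near $\sigma$, i.e. that one may compute using only the star of $\sigma$; the key geometric fact is contractibility of these stars (or of the relevant subcomplexes), which feeds into the exactness theorem. A clean way to organize this is to first reduce, by $G$-equivariance and the parahoric-support hypothesis, to a statement inside the finite reductive quotient $\para{G}{x}/\radpara{G}{x}$ and the link of $x$, where the combinatorics is finite-dimensional and the vanishing of higher homology can be checked directly.
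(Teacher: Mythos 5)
Your overall architecture is sound: exhibiting the two natural isomorphisms $V \cong H_0(\bt,\Gamma(V))$ and $\Gamma \cong \Gamma(H_0(\bt,\Gamma))$ does give the equivalence, and the first of these is indeed supplied by the Meyer--Solleveld resolution theorem. The gap is in the second composite, which is where essentially all of the work of the paper lies (it is Theorem \ref{theEsssurj}). Your claim that surjectivity of $V_\sigma \to e_\sigma(H_0(\bt,\Gamma))$ ``is immediate from $H_0$ being a quotient of $C_0=\bigoplus_x V_x$ together with $e_\sigma(V_x)=V_\sigma$'' is false: $H_0(\bt,\Gamma)=\sum_y i_y(V_y)$ with $y$ running over \emph{all} vertices of the building, so one must show $e_\sigma(i_y(V_y)) \subseteq i_x(V_\sigma)$ for $y$ arbitrarily far from $\sigma$, and the identity $e_\sigma(V_y)=V_\sigma$ is simply not available there. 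The paper handles this by choosing a chain of adjacent vertices $y=z_0,\dots,z_m=x$ with $z_{i+1}\in H(z_i,x)$, using consistency to write $e_\sigma i_y = e_\sigma e_{z_m}\cdots e_{z_1}e_y i_y$, and then pushing along the chain with the key compatibility $e_{z_{i+1}}\circ i_{z_i} = i_{z_{i+1}}\circ \varepsilon_{z_{i+1}}^{z_i}$ (Lemma \ref{lemDiagCommix}); that compatibility in turn requires the local maps $\varepsilon_\sigma^\tau$ and their independence of the admissible path, i.e.\ the whole of Sections \ref{secAdmpaths}--\ref{secapplocales}. Nothing in your sketch constructs or controls these transition maps.

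The injectivity half has the same problem. Injectivity of $i_x : V_x \to H_0(\bt,\Gamma)$ is a degree-zero statement about how the image of $\partial : C_1 \to C_0$ meets the summand $V_x$; it does not follow from vanishing of higher homology, so ``invoking exactness in positive degrees'' gives you nothing here. In the paper it is obtained by producing retractions $p_x^\Sigma : H_0(\Sigma,\Gamma)\to V_x$ with $p_x^\Sigma\circ i_x^\Sigma=\Id$ for finite convex $\Sigma$ (again built from the local maps), and by showing that the induced idempotents $e_x^\Sigma=i_x^\Sigma\circ p_x^\Sigma$ form a consistent system so that $\bigcap_{x}\Ker(e_x^{\Sigma})=0$ (Propositions \ref{prosystconst} and \ref{prousig}). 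Finally, your proposed reduction ``to the link of $x$ and the finite reductive quotient'' misreads where the difficulty sits: the obstruction is global (contributions to $H_0$ from vertices far from $\sigma$), not local, which is precisely why the paper develops admissible paths between arbitrary pairs of polysimplices rather than arguing star by star.
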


\begin{Rem*}
\begin{enumerate}
\item In the case of $\gl{n}(F)$, these results were proven by Wang in \cite{Wanga}.
\item The equivalence on $\gl{n}(F)$ was used by Dat in \cite{dat_equivalences_2014} to construct an equivalence of categories between an arbitrary block of $\gl{n}(F)$ and a unipotent block of another reductive $p$-adic group. Thus, we can hope that this theorem can be used to show an equivalence between blocks in a more general context.
\item This theorem applies in particular to all the categories constructed in \cite{lanard} and \cite{lanard2}.
\end{enumerate}
\end{Rem*}

Schneider and Stuhler proved in \cite[Thm. V.1]{SchneiderStuhler} a general equivalence of categories between a category of representations and of coefficient systems. In \cite[Thm. V.1]{SchneiderStuhler} the field of coefficient is $R=\mathbb{C}$. However, the main argument is the existence of coefficient systems $\Gamma(V)$, natural in $V$, that provide resolutions of $V$. For the coefficient systems considered in this article, the resolution properties are proven in \cite[Thm. 2.4]{meyer_resolutions_2010}. Hence the result of Schneider and Stuhler is valid in our context. The main difference is that, in the category of coefficient systems, they need to formally invert morphisms $s$ such that $H_0(\bt,s)$ is an isomorphism. One strategy to prove the theorem wanted here, could be to restrict the functor of Schneider and Stuhler to $\coef$ and prove that the localization is trivial. Proving that is similar to prove the results of this article (for instance, it follows from Theorem \ref{theEsssurj}). Therefore, we will instead follow the strategy of \cite{Wanga}. Moreover, this will also show different techniques to prove acyclicity of chain complexes, which is very important.

\bigskip

This paper is composed of four parts. The first one recalls the definitions of coefficient systems and systems of idempotents. The proof in \cite{Wanga} uses geometric properties of the Bruhat-Tits building specific to $\gl{n}$, in particular, the notion of taut paths. These paths do not work well for a general reductive group, so we introduce in Section \ref{secAdmpaths} a more flexible notion: the admissible paths. They allow us to redefine the local maps in Section \ref{secapplocales}. The goal of this third section is to check that this definition is independent of the path chosen. Once this is done, we can easily show, in the last section, that the "new" local maps satisfy all the properties that we need to follow the rest of the proof of Wang.

\section{Coefficient systems and systems of idempotents}
\label{secsystdecoeff}

Let $\kk$ be a non-archimedean local field and $G$ the $\kk$-points of a connected reductive group over $\kk$. Let $p$ be the characteristic of the residue field of $\kk$.

\bigskip
Denote by $\bt$ the semi-simple Bruhat-Tits building associated with $G$ (cf. \cite{bt1} and \cite{BT}). The building is a polysimplicial complex and we denote by $\bts$ the set of dimension 0 polysimplices, that is the vertices. In the rest of this article we will use Latin letters $x$, $y$, $\cdots$ for the vertices and Greek letters $\sigma$, $\tau$, $\cdots$ for general polysimplices. The building $\bt$ is partially ordered by the partial order $\sigma \leq \tau$ if $\sigma$ is a face of $\tau$. A set of polysimplices $\sigma_{1},\cdots,\sigma_{k}$ is said to be adjacent if there is a polysimplex $\sigma$ such as for all $i \in \{1,\cdots,k\}$, $\sigma_{i} \leq \sigma$. If $\{\sigma_{1},\cdots,\sigma_{k}\}$ is a set of adjacent polysimplices we denote by $[\sigma_{1},\cdots,\sigma_{k}]$ the smallest polysimplex containing $\sigma_{1} \cup \cdots \cup \sigma_{k}$. For $\sigma$, $\tau$ two polysimplices, denote by $H(\sigma,\tau)$ the polysimplicial hull of $\sigma$ and $\tau$, that is, the intersection of all the apartments containing $\sigma \cup \tau$.

\bigskip

Let $R$ be a commutative ring in which $p$ is invertible. A coefficient system $\Gamma$ on $\bt$ with coefficients in $R$ is a contravariant functor from the category $(\bt,\leq)$ to the category of $R$-modules. In other words, this is the data of $R$-modules $(V_{\sigma})_{\sigma \in \bt}$ and $R$-morphisms $\varphi_{\tau}^{\sigma}:V_{\sigma} \rightarrow V_{\tau}$ for all polysimplices $\tau \leq \sigma$ such that: $\varphi_{\sigma}^{\sigma} = \Id$ for all $\sigma \in \bt$, and $\varphi_{\omega}^{\tau} \circ \varphi_{\tau}^{\sigma} = \varphi_{\omega}^{\sigma}$ if $\omega, \tau, \sigma$ are polysimplicies such that $\omega \leq \tau \leq \sigma$.

To associate a cellular chain complex to $\Gamma$, each polysimplex is equipped with an orientation that induces an orientation on each of its faces. For all $\sigma,\tau \in \bt$, we define

\[\epsilon_{\tau\sigma}=
\begin{cases}
1 &\text{ if } \tau \leq \sigma \text{ with compatible orientations} \\
-1 &\text{ if } \tau \leq \sigma \text{ with opposite orientations} \\
0 &\text{ if } \tau \text{ is not a face of } \sigma
\end{cases}
\]

Let $\Sigma$ be a subcomplex of $\bt$. The cellular chain complex $C_{*}(\Sigma,\Gamma)$ on $\Sigma$ with coefficients $\Gamma$ is the $\mathbb{N}$-graduated chain complex
\[C_{*}(\Sigma,\Gamma):=\cdots \overset{\partial}{\to} C_c^{or}(\Sigma_{d},\Gamma) \overset{\partial}{\to} \cdots \overset{\partial}{\to} C_c^{or}(\Sigma_{0},\Gamma)\]
where $C_c^{or}(\Sigma_{d},\Gamma):=\bigoplus_{\sigma \in \Sigma, \dim(\sigma)=d} V_\sigma$ and the differential is given by
\[ \partial((v_\sigma)_{\sigma \in \bt})_{\tau} = \sum_{\sigma \in \Sigma} \epsilon_{\tau\sigma}\varphi_{\sigma}^{\tau}(v_\sigma).\]
The homology of $C_{*}(\Sigma,\Gamma)$ is then denoted $H_{*}(\Sigma,\Gamma)$.

\bigskip

A coefficient system is said to be $G$-equivariant if for all $g \in G$ and $\sigma \in \bt$ there is an isomorphism $\alpha_{g,\sigma}: V_{\sigma} \rightarrow V_{g\sigma}$ compatible with the $\varphi_{\tau}^{\sigma}$ and such that $\alpha_{1,\sigma}=\Id$, $\alpha_{g,h\sigma} \circ \alpha_{h,\sigma} = \alpha_{gh,\sigma}$.

\bigskip

We can construct $G$-equivariant coefficient systems from a smooth $RG$-module $V$ and a consistent system of idempotents. We fix a Haar measure on $G$ and denote by $\mathcal{H}_{R}(G)$ the Hecke algebra of $G$ with coefficients in $R$, that is the algebra of functions from $G$ to $R$ locally constant with compact support.

\begin{Def}
\label{defcoherent}
A system of idempotents $e=(e_{x})_{x\in \bts}$ of $\mathcal{H}_{R}(G)$ is consistent if the following properties are satisfied :
\begin{enumerate}
\item $e_{x}e_{y}=e_{y}e_{x}$ when $x$ and $y$ are adjacent.
\item $e_{x}e_{z}e_{y}=e_{x}e_{y}$ when $z \in H(x,y)$ and $z$ is adjacent to $x$.
\item $e_{gx}=ge_{x}g^{-1}$ for all $x\in \bts$ and $g\in G$.
\end{enumerate}
\end{Def}

If $e$ is a consistent system of idempotents and $V$ is a smooth $RG$-module, then the functor $\sigma \to e_{\sigma}(V)$ provides a $G$-equivariant coefficient system.

\bigskip

Let $e$ be a consistent system of idempotents on $\bt$. We will say that $e$ satisfies the condition \eqref{eqcondidem} if
\begin{equation}
\label{eqcondidem}
\text{For every vertex } x \text{ and every polysimplex } \sigma \text{ containing } x \text{ we have } e_\sigma \in \hecke{\fix{G}{x}}{R}
\tag{$\ast$}
\end{equation}
where $\fix{G}{x}$ designate the pointwise stabilizer $x$.

Then if $e$ satisfies \eqref{eqcondidem} and if $\Gamma$ is a $G$-equivariant coefficient system, the condition \eqref{eqcondidem} ensures that $e_\sigma$ acts on $V_x$. A $e$-coefficient system is then a $G$-equivariant coefficient system such that for every  $x \leq \sigma$ the morphism $\varphi_{x}^{\sigma} : V_{\sigma} \rightarrow V_{x}$ induces an isomorphism $V_{\sigma} \overset{\backsim}{\rightarrow} e_{\sigma}(V_{x})$. We denote by $\coef$ the category of $e$-coefficient systems.

\bigskip

Let $\rep[R]{G}$ be the abelian category of smooth $G$-representations with coefficients in $R$ and $\rep[R][e]{G}$ the full subcategory of objects $V$ in $\rep[R]{G}$ such that $V=\sum_{x\in \bts}e_{x}V$. Note that $\rep[R][e]{G}$ is a Serre subcategory by \cite[Thm 3.1]{meyer_resolutions_2010}.

\bigskip
For $e$ a consistent system of idempotents satisfying the condition \eqref{eqcondidem}, we have just constructed a functor
\[\begin{array}{ccc}
 \rep[R][e]{G} & \to & \coef \\
 V & \mapsto & \Gamma(V) \\
\end{array}\]
where $\Gamma(V)$ is the functor $\sigma \to e_{\sigma}(V)$. We wish to show that this functor induces an equivalence of categories.

\section{Admissible paths}

\label{secAdmpaths}

To define his local maps, Wang uses in \cite{Wanga} the notion of taut paths ("chemins tendus" in French) between two vertices of the building. However, this definition works well for $\gl{n}$ but not for a general reductive group. We can define taut paths for a general building, but properties as in \cite[Lem 2.2.5]{Wanga} are no longer true (the simplicial hull of two vertices is not, in general, the intersection of two cones). To have more flexibility than with taut path we introduce in this section the notion of admissible paths. They will allow, in Section \ref{secapplocales}, to define the local maps for $G$.

\begin{Def}
Let $\tau,\sigma \in \bt$. A sequence of polysimplices $\tau_{0},\cdots,\tau_{n} \in \bt$ is called an \textit{admissible path} from $\tau$ to $\sigma$ if
\begin{enumerate}
\item $\tau_{0}=\tau$ and $\tau_{n}=\sigma$
\item $\forall i \in \{0, \cdots, n-1\}$, $\tau_{i+1} \in H(\tau_{i},\sigma)$
\item $\forall i \in \{0, \cdots, n-1\}$, $\tau_{i} \leq \tau_{i+1}$ or $\tau_{i+1} \leq \tau_{i}$
\end{enumerate}
\end{Def}

\begin{Rem}
\label{remCheminTendu}
In \cite[(2.2.4)]{Wanga}, we have the definition of taut paths ("chemins tendus" in French). We can notice that if $(z_{0},\cdots,z_{m})$ is a taut path, then $(z_{0}, [z_{0},z_{1}], z_{1} ,[z_{1},z_{2}],$  $\cdots ,z_{m-1},[z_{m-1},z_{m}],z_{m})$ is an admissible path.
\end{Rem}

\begin{Lem}
\label{lemCheminsTendus}
Let $\tau_{0},\cdots,\tau_{n} \in \bt$.
\begin{enumerate}
\item If $\tau_{0},\cdots,\tau_{n}$ is an admissible path then, for all $k \in \{0,\cdots,n\}$, $\tau_{k},\cdots,\tau_{n}$ is also an admissible path.
\item Let $k \in \{0,\cdots,n\}$ and assume that $\tau_{0},\cdots,\tau_{k}$ and $\tau_{k},\cdots,\tau_{n}$ are admissible paths. Then $\tau_{0},\cdots,\tau_{n}$ is an admissible path if and only if for all $i \in \{0,\cdots,k\}$, $\tau_{k} \in H(\tau_{i},\tau_{n})$.
\item Let $k,l \in \{0,\cdots,n\}$ with $k \leq l$. Then if $\tau_{0},\cdots,\tau_{n}$ is an admissible path and if $\tau_{k},\cdots,\tau_{l}$ are adjacent then $\tau_{0},\cdots,\tau_{k},[\tau_{k},\cdots,\tau_{l}],\tau_{l},\cdots,\tau_{n}$ is again an admissible path.
\end{enumerate}
\end{Lem}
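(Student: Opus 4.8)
The plan is to isolate two elementary properties of the polysimplicial hull and then to verify the three clauses of the definition of an admissible path by bookkeeping. The two properties I would use are: \emph{(Fact A)} for $\sigma,\tau\in\bt$ the hull $H(\sigma,\tau)$ is a subcomplex of $\bt$ containing $\sigma$ and $\tau$, and if a polysimplex $\rho$ lies in $H(\sigma,\tau)$ then every apartment containing $\sigma\cup\tau$ contains $\rho$ as well, so $H(\sigma,\rho)\subseteq H(\sigma,\tau)$ and $H(\rho,\tau)\subseteq H(\sigma,\tau)$; and \emph{(Fact B)} if $\sigma_{1},\dots,\sigma_{r}$ are adjacent polysimplices all lying in an apartment $A$ then $[\sigma_{1},\dots,\sigma_{r}]\subseteq A$, and hence the same holds with $A$ replaced by any hull $H(\sigma,\tau)$, the latter being an intersection of apartments. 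With these in hand, assertion (1) is immediate: in a final segment $\tau_{k},\dots,\tau_{n}$ the endpoint clause and the comparability clause are inherited, and the hull clause for $\tau_{k},\dots,\tau_{n}$ (which has the same endpoint $\sigma=\tau_{n}$) is a sublist of the hull clause for $\tau_{0},\dots,\tau_{n}$.

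The key intermediate step I would establish is: for any admissible path $\rho_{0},\dots,\rho_{m}$ and any $0\le i\le j\le m$ one has $\rho_{j}\in H(\rho_{i},\rho_{m})$. I would prove this by induction on $j-i$: the case $j=i$ is Fact A, the case $j=i+1$ is the hull clause, and for $j\ge i+2$ the path $\rho_{i+1},\dots,\rho_{m}$ is admissible by (1), so $\rho_{j}\in H(\rho_{i+1},\rho_{m})$ by induction, while $\rho_{i+1}\in H(\rho_{i},\rho_{m})$ by the hull clause, so Fact A gives $H(\rho_{i+1},\rho_{m})\subseteq H(\rho_{i},\rho_{m})$ and hence $\rho_{j}\in H(\rho_{i},\rho_{m})$.

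For assertion (2): the endpoint and comparability clauses of $\tau_{0},\dots,\tau_{n}$ hold because they hold for the two sub-paths, so everything reduces to the hull clause $\tau_{i+1}\in H(\tau_{i},\tau_{n})$ for $0\le i\le n-1$. For $k\le i\le n-1$ this is the hull clause of $\tau_{k},\dots,\tau_{n}$. For $0\le i\le k-1$ the first sub-path gives $\tau_{i+1}\in H(\tau_{i},\tau_{k})$; if moreover $\tau_{k}\in H(\tau_{i},\tau_{n})$ for all $i\le k$, then Fact A yields $H(\tau_{i},\tau_{k})\subseteq H(\tau_{i},\tau_{n})$, hence $\tau_{i+1}\in H(\tau_{i},\tau_{n})$, so $\tau_{0},\dots,\tau_{n}$ is admissible. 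Conversely, if $\tau_{0},\dots,\tau_{n}$ is admissible, the intermediate step above with $j=k$ gives $\tau_{k}\in H(\tau_{i},\tau_{n})$ for all $i\le k$. This is exactly the stated equivalence.

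For assertion (3), set $\mu=[\tau_{k},\dots,\tau_{l}]$ and consider $\tau_{0},\dots,\tau_{k},\mu,\tau_{l},\dots,\tau_{n}$. The endpoint clause is unchanged; the comparability clause holds because $\tau_{k}\le\mu$ and $\tau_{l}\le\mu$ while the other consecutive pairs are unchanged; and for the hull clause only the pairs $(\tau_{k},\mu)$ and $(\mu,\tau_{l})$ are new. For $(\mu,\tau_{l})$: $\tau_{l}$ is a face of $\mu$, and $H(\mu,\tau_{n})$ is a subcomplex containing $\mu$ by Fact A, hence contains $\tau_{l}$. For $(\tau_{k},\mu)$: by (1) the segment $\tau_{k},\dots,\tau_{n}$ is admissible, so the intermediate step gives $\tau_{i}\in H(\tau_{k},\tau_{n})$ for every $k\le i\le l$, and since $\tau_{k},\dots,\tau_{l}$ are adjacent by hypothesis, Fact B gives $\mu\in H(\tau_{k},\tau_{n})$; all remaining consecutive pairs keep their hull conditions from $\tau_{0},\dots,\tau_{n}$. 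I expect the only genuine obstacle here to be Fact B — that an apartment, hence any intersection of apartments, which contains a family of adjacent polysimplices also contains their join; this rests on the convexity of apartments together with the description of faces inside a single apartment via the governing hyperplane arrangement, and it is precisely what replaces, for a general reductive $G$, the cone-type arguments available only for $\GL_{n}$. (Fact A, by contrast, is formal from the definition of $H(\sigma,\tau)$ as an intersection of apartments.)
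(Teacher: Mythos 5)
Your proof is correct and follows essentially the same route as the paper's: part (1) is immediate, your ``key intermediate step'' is exactly the forward direction of (2) proved via the inclusion $H(\tau_{i+1},\tau_n)\subseteq H(\tau_i,\tau_n)$, and (3) reduces to the two new consecutive pairs with $[\tau_k,\dots,\tau_l]\in H(\tau_k,\tau_n)$ deduced from (2). If anything you are more explicit than the paper, which passes from ``each $\tau_i$ lies in $H(\tau_k,\tau_n)$'' to ``the join lies in $H(\tau_k,\tau_n)$'' without comment, whereas you isolate this as your Fact B and justify it by convexity of apartments.
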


\begin{proof}
\begin{enumerate}
\item It is obvious.
\item Let us assume that $\tau_{0},\cdots,\tau_{n}$ is an admissible path. Then for any $i$, since $\tau_{i+1} \in H(\tau_{i},\tau_{n})$, we have $H(\tau_{i+1},\tau_{n}) \subseteq H(\tau_{i},\tau_{n})$. So for every $i \leq k$, $\tau_{k} \in H(\tau_{i},\tau_{n})$.

Conversely, let us assume that for every $i \in \{0,\cdots,k\}$, $\tau_{k} \in H(\tau_{i},\tau_{n})$ and let us show that $\tau_{0},\cdots,\tau_{n}$ is an admissible path. The only condition we have to check is that $\tau_{i+1} \in H(\tau_{i},\sigma)$. This is immediate for $i \in \{k, \cdots, n-1\}$ because $\tau_{k},\cdots,\tau_{n}$ is an admissible path. For $i<k$, $\tau_{0}, \cdots, \tau_{k}$ being an admissible path, $\tau_{i+1} \in H(\tau_{i},\tau_{k})$. But $\tau_{k} \in H(\tau_{i},\tau_{n})$ so $H(\tau_{i},\tau_{k}) \subseteq H(\tau_{i},\tau_{n})$ and we have the result.

\item All we have to do is to check the conditions of being an admissible path between $\tau_{k}$ and $[\tau_{k},\cdots,\tau_{l}]$ and between $[\tau_{k},\cdots,\tau_{l}]$ and $\tau_{l}$. First we have $\tau_{k} \leq [\tau_{k},\cdots,\tau_{l}]$ and $\tau_{l} \leq [\tau_{k},\cdots,\tau_{l}]$. Since $\tau_{l} \leq [\tau_{k},\cdots,\tau_{l}]$, $\tau_{l} \in H([\tau_{k},\cdots,\tau_{l}],\tau_{n})$. All that remains to be checked is that $[\tau_{k},\cdots,\tau_{l}] \in H(\tau_{k},\tau_{n})$, that is, for any $i \in \{k,\cdots,l\}$, $\tau_{i} \in H(\tau_{k},\tau_{n})$. Now, this condition is satisfied by 2, which completes the proof.
\end{enumerate}
\end{proof}

\begin{Lem}
\label{lemCheminOmega}
Let $\sigma,\tau,\omega \in \bt$ such that $\omega \in H(\sigma,\tau)$. Then there exists an admissible path $\tau_{0}=\tau,\cdots,\tau_{k}=\omega$ from $\tau$ to $\omega$ such that for all $i$, $\omega \in H(\tau_{i},\sigma)$.

In particular, there is an admissible path $\tau_{0}=\tau,\cdots,\tau_{k}=\omega,\cdots,\tau_{n}=\sigma$ from $\tau$ to $\sigma$ such that $\tau_{0},\cdots,\tau_{k}$ is an admissible path from $\tau$ to $\omega$.
\end{Lem}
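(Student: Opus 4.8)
The plan is to build the path from $\tau$ to $\omega$ by induction on the polysimplicial distance (say, the number of polysimplices in any apartment-segment, or equivalently on $\dim H(\tau,\omega)$ minus $\dim\omega$, or simply on the combinatorial length of a gallery-type path) inside the hull $H(\tau,\sigma)$. First I would dispose of the base case: if $\tau$ and $\omega$ are adjacent (in particular if one is a face of the other, or if $\tau=\omega$), then $(\tau,[\tau,\omega],\omega)$ is already an admissible path from $\tau$ to $\omega$, and since $[\tau,\omega]\subseteq H(\tau,\omega)\subseteq H(\sigma,\tau)$ (using $\omega\in H(\sigma,\tau)$), and since $\omega$ is a face of $[\tau,\omega]$, we get $\omega\in H([\tau,\omega],\sigma)$; we also have $\omega\in H(\tau,\sigma)$ trivially, so all the required conditions $\omega\in H(\tau_i,\sigma)$ hold. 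For the general step I would pick, inside the apartment-hull $H(\tau,\sigma)$, a polysimplex $\tau_1$ which is "one step closer" to $\omega$ than $\tau$ is — concretely, a codimension-one face relationship or an adjacent polysimplex lying on a minimal gallery from $\tau$ toward $\omega$ — chosen so that $\tau_1\in H(\tau,\sigma)$ and $\tau\le\tau_1$ or $\tau_1\le\tau$ and $\omega\in H(\tau_1,\sigma)$. Then $H(\tau_1,\omega)\subsetneq H(\tau,\omega)$, and I would apply the induction hypothesis to $\tau_1,\omega$ (with the same $\sigma$) to get an admissible path $\tau_1,\dots,\tau_k=\omega$ with $\omega\in H(\tau_i,\sigma)$ for all $i\ge 1$; prepending $\tau$ and checking $\tau_1\in H(\tau,\sigma)$ together with condition (3) at step $0$ finishes the first assertion via part 2 of Lemma \ref{lemCheminsTendus}.

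The cleanest way to produce the intermediate $\tau_1$ is to work entirely inside a single apartment $A$ containing $\sigma\cup\tau$, noting that $\omega\in H(\sigma,\tau)\subseteq A$. In $A$ one has an affine structure and $H(\tau,\sigma)$ is the polysimplicial hull, a finite union of closed polysimplices; I would choose a point in the relative interior of $\omega$ and a point in the relative interior of $\tau$ and follow the straight segment between them, which stays in $H(\tau,\sigma)$ by convexity of hulls in an apartment; the first polysimplex encountered after leaving $\tau$, together with the face of $\tau$ it meets, supplies either a $\tau_1$ with $\tau\le\tau_1$ (when the segment immediately enters a higher-dimensional cell) or a $\tau_1\le\tau$ (when it exits through a proper face); in both cases $\tau_1\in H(\tau,\sigma)$ since the segment lies in the hull, and $\omega\in H(\tau_1,\sigma)$ because $\tau_1$ lies on the segment toward $\omega$ so $H(\tau_1,\sigma)$ still contains $\omega$ (here I use that if $\omega,\tau_1$ both lie in $H(\tau,\sigma)$ with $\tau_1$ "between" $\tau$ and $\omega$ then $\omega\in H(\tau_1,\sigma)$ — this is the geometric point that must be argued from the structure of hulls). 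The "in particular" clause is then immediate: concatenate the path $\tau,\dots,\omega$ just constructed with any admissible path $\omega,\dots,\sigma$ from $\omega$ to $\sigma$ (such a path exists, e.g. the trivial one $(\omega,[\omega,\sigma],\sigma)$ is not admissible in general, but one can take a straight-segment path from $\omega$ to $\sigma$ inside an apartment, or invoke that admissible paths always exist between two polysimplices, which follows from the same construction applied with $\tau$ replaced by $\omega$); that the concatenation is admissible follows from part 2 of Lemma \ref{lemCheminsTendus} once we check $\omega\in H(\tau_i,\sigma)$ for each $i\le k$, which is exactly what the first part guarantees.

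The main obstacle I anticipate is the geometric lemma hidden in the middle step: showing that moving from $\tau$ one combinatorial step toward $\omega$ inside $H(\tau,\sigma)$ lands in a polysimplex $\tau_1$ still satisfying $\omega\in H(\tau_1,\sigma)$, and that the relevant notion of "closer to $\omega$" is well-founded so the induction terminates. The subtlety is that $H(\tau,\sigma)$ need not be a cone or an apartment, and hulls are not as rigid as they are for $\GL_n$ (this is precisely why the author abandoned taut paths for admissible paths), so one cannot simply quote \cite[Lem 2.2.5]{Wanga}; instead one must argue directly from convexity of polysimplicial hulls inside an apartment and from the fact that $H(-,\sigma)$ is monotone under inclusion of the first argument (if $\tau_1\in H(\tau,\sigma)$ and $\omega\in H(\tau_1,\sigma)$... — more precisely one needs: any apartment containing $\tau_1\cup\sigma$ and "on the $\omega$ side" contains $\omega$). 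Once that geometric fact is isolated and proved, the combinatorial bookkeeping is routine and Lemma \ref{lemCheminsTendus}(2) does the rest.
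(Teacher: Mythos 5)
Your treatment of the ``in particular'' clause is exactly the paper's: concatenate with a path from $\omega$ to $\sigma$ and invoke Lemma \ref{lemCheminsTendus}(2), using the condition $\omega\in H(\tau_i,\sigma)$ furnished by the first assertion. For the first assertion, however, the paper does no work: it simply cites \cite[Lem.~2.15]{meyer_resolutions_2010}, which is verbatim the statement to be proved. Your attempt to reprove it directly is therefore a more self-contained route, and the construction you sketch --- follow the straight segment from an interior point of $\tau$ to an interior point of $\omega$ inside an apartment containing $H(\tau,\sigma)$ and record the polysimplices it traverses --- is the standard one; it is the same device the author uses in Lemma \ref{lemhyperplan} and Proposition \ref{proIndependanceChemin}. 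One simplification: you do not need an induction or a well-founded notion of ``closer to $\omega$''. The segment meets only finitely many cells, so you can take the entire traversed sequence at once; condition (3) of admissibility holds automatically because consecutive cells along a segment are faces of one another.

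The genuine gap is the step you yourself flag and never close: that a polysimplex $\tau_1$ met by the segment still satisfies $\omega\in H(\tau_1,\sigma)$, together with the companion fact $\tau_{i+1}\in H(\tau_i,\omega)$ needed for admissibility. This claim is the entire content of the lemma, so as written nothing has been established. It can be closed using the description of the hull inside an apartment $\mathcal{A}\supseteq H(\tau,\sigma)$ as the intersection of the half-spaces $\{a\geq 0\}$ of affine roots containing both polysimplices (the description used implicitly in Lemma \ref{lemseparationsimplexe}). Let $a$ be an affine root with $a\geq 0$ on $\tau_1$ and on $\sigma$, and suppose $a<0$ somewhere on $\omega$; since no root hyperplane crosses the interior of a cell, $a<0$ on the interior of $\omega$, in particular at the endpoint $w$ of the segment. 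Since $\omega\in H(\tau,\sigma)$ and $a\geq 0$ on $\sigma$, we cannot have $a\geq 0$ on $\tau$, so $a<0$ on the interior of $\tau$, in particular at the other endpoint $t$. An affine function negative at both endpoints of a segment is negative on all of it, hence at the interior point of $\tau_1$ lying on the segment, contradicting $a\geq 0$ on $\tau_1$. The same convexity argument applied to the subsegment between the points realizing $\tau_i$ and $w$ gives $\tau_{i+1}\in H(\tau_i,\omega)$. With that fact in place your construction is complete; alternatively, you may simply quote \cite[Lem.~2.15]{meyer_resolutions_2010} as the paper does and reserve the direct argument for the concatenation step.
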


\begin{proof}
The first assertion is \cite[Lem. 2.15]{meyer_resolutions_2010} and the second one follows from 2. in Lemma \ref{lemCheminsTendus}.
\end{proof}

\section{The local maps}
\label{secapplocales}

The admissible paths allow us to redefine the local maps of \cite{Wanga}. The main purpose of this section will be to show that the definition given below is independent of the choice of the admissible path. This is the most technical section. Hence, before we start, we will give a sketch of the proof. The first step is to simplify the admissible path chosen, without changing the local map. For instance, if $x$ and $y$ are two adjacent vertices we want to simplify a path like $x,[x,y],x,[x,y],y$ into $x,[x,y],y$. This is done in Lemmas \ref{lemadjacent} and \ref{lemsimpliepsilon} and we prove the properties satisfied by this simpler path in Lemma \ref{lemsimplificationchemin}. We continue our simplification process (mostly for technical reasons in the proof) and we show that we can transform an admissible path from $\tau$ to $\sigma$ so that the second vertex is the unique maximal polysimplex in $H(\tau,\sigma)$ adjacent to $\tau$ (Lemma \ref{lemfactorisationdebut}). In this way, we can assume that $\tau$ is maximal. Now, we prove the result by induction. Since we start with a maximal polysimplex, the "size" of the path from the second vertex will be smaller and we can apply the induction hypothesis. Hence we can modify the path from the second vertex, and by choosing a suitable vertex in the middle we can again change the beginning of the path (this is done in Lemma \ref{lemrecurrencechemin}). Hence we can "transform" a path into a "relatively close" one. By repeatedly applying these transformations we show in Proposition \ref{proIndependanceChemin} that we can obtain any other admissible path from $\tau$ to $\omega$.

\bigskip

Let $e$ be a consistent system of idempotents which satisfies \eqref{eqcondidem}. Let $\Gamma=(V_{\sigma})_{\sigma \in \bt}$ be a $e$-coefficient system. When $x \leq \sigma$, we identify $V_{\sigma}$ to $e_{\sigma}(V_{x})$ via the morphism $\varphi_{x}^{\sigma}$. For two polysimplices $\tau,\sigma$ such that $\tau \leq \sigma$, we have $V_{\sigma}=e_{\sigma}(V_{\tau})$ and we denote by $p_{\sigma}^{\tau}$ the projection $V_{\tau} \twoheadrightarrow e_{\sigma}(V_{\tau})=V_{\sigma}$.

Let $\tau,\sigma \in \bt$ such that $\tau \leq \sigma$. Then we get two maps
\[ p_{\sigma}^{\tau} : V_{\tau} \twoheadrightarrow e_{\sigma}V_{\tau}=V_{\sigma},\]
\[ \varphi_{\tau}^{\sigma} : V_{\sigma} \hookrightarrow V_{\tau}.\]
Thus we define if $\tau \leq \sigma$ or $\sigma \leq \tau$
\[ \varepsilon_{\sigma}^{\tau} : V_{\tau} \rightarrow V_{\sigma}\]
by $\varepsilon_{\sigma}^{\tau}=p_{\sigma}^{\tau}$ if $\tau \leq \sigma$ and $\varepsilon_{\sigma}^{\tau}=\varphi_{\sigma}^{\tau}$ if $\tau \geq \sigma$.

\begin{Def}
Let $\tau,\sigma \in \bt$. Take $\tau_{0}=\tau,\cdots,\tau_{n}=\sigma$ an admissible path between $\tau$ and $\sigma$. We define $\varepsilon_{\sigma}^{\tau} : V_{\tau} \rightarrow V_{\sigma}$ by
\[ \varepsilon_{\sigma}^{\tau} = \varepsilon^{\tau_{n-1}}_{\tau_{n}} \circ \cdots \circ \varepsilon_{\tau_{1}}^{\tau_{0}}.\]
\end{Def}

\begin{Rem}
If $(z_{0},\cdots,z_{m})$ is a taut path according to \cite{Wanga}, then the definition of $\varepsilon_{z_{m}}^{z_{0}}$ given in \cite{Wanga} Section 2.2 is the same as the one given here, if we consider the admissible path $(z_{0}, [z_{0},z_{1}], z_{1} ,\cdots,z_{m-1},[z_{m-1},z_{m}],z_{m})$.
\end{Rem}

The purpose of what follows is to show that this definition is independent of the admissible path chosen. We need preliminary lemmas.

\begin{Lem}
\label{lemadjacent}
Let $\tau_{0},\cdots,\tau_{n} \in \bt$ be polysimplices such that for all $i$ we have $\tau_{i} \leq \tau_{i+1}$ or $\tau_{i+1} \leq \tau_{i}$. We also assume that $\tau_{0},\cdots,\tau_{n}$ are adjacent. Then
\[ \varepsilon^{\tau_{n-1}}_{\tau_{n}} \circ \cdots \circ \varepsilon_{\tau_{1}}^{\tau_{0}} = \varepsilon^{[\tau_{0},\cdots,\tau_{n}]}_{\tau_{n}} \circ \varepsilon_{[\tau_{0},\cdots,\tau_{n}]}^{\tau_{0}}.\]
\end{Lem}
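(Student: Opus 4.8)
The plan is to reduce everything to the base case $n=1$ by induction on $n$, exploiting that all the $\tau_i$ sit inside the single polysimplex $\eta := [\tau_0,\dots,\tau_n]$, and that (after identifying every $V_{\tau_i}$ with a subspace of $V_x$ for a fixed vertex $x \leq \tau_0$, using \eqref{eqcondidem}) each map $\varepsilon^{\tau_i}_{\tau_{i+1}}$ is, on the level of $V_x$, either the inclusion $e_{\tau_{i+1}}V_x \hookrightarrow e_{\tau_i}V_x$ (when $\tau_{i+1} \leq \tau_i$) or the idempotent projection $v \mapsto e_{\tau_{i+1}}v$ restricted from $e_{\tau_i}V_x$ to $e_{\tau_{i+1}}V_x$ (when $\tau_i \leq \tau_{i+1}$). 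Thus the whole composite $\varepsilon^{\tau_{n-1}}_{\tau_n}\circ\cdots\circ\varepsilon^{\tau_0}_{\tau_1}$, viewed inside $\End(V_x)$, is simply the operator $w \mapsto e_{\tau_n} e_{\tau_{n-1}} \cdots e_{\tau_1} w$ restricted to $e_{\tau_0}V_x$ and landing in $e_{\tau_n}V_x$; similarly the right-hand side is $w \mapsto e_{\tau_n} e_{\eta} w$ restricted to $e_{\tau_0}V_x$. So the lemma reduces to the identity of operators
\[
e_{\tau_n} e_{\tau_{n-1}} \cdots e_{\tau_1} \, e_{\tau_0} \;=\; e_{\tau_n} e_{\eta} \, e_{\tau_0}
\]
acting on $V_x$ (the extra $e_{\tau_0}$ on the right is harmless since we precompose with the inclusion of $e_{\tau_0}V_x$, and on the left $\varepsilon^{\tau_0}_{\tau_1}$ already builds in $e_{\tau_1}$ on $e_{\tau_0}V_x$).

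The key step is therefore a purely algebraic statement about the idempotents $e_{\tau_0},\dots,e_{\tau_n}$, all of which lie in $\hecke{\fix{G}{x}}{R}$ and all of whose associated polysimplices are faces of the common polysimplex $\eta$. First I would note that consistency property (2) in Definition \ref{defcoherent}, combined with (1), gives that for any polysimplices $\alpha,\beta$ with a common upper bound, $e_\beta e_\alpha = e_\beta e_{[\alpha,\beta]} e_\alpha$ (take $z = [\alpha,\beta]$, which lies in $H(\alpha,\beta)$ and is adjacent to $\alpha$ since $\alpha \leq [\alpha,\beta]$; applying (2) both ways and using commutativity when vertices are adjacent); more simply, since $\tau_i \leq \eta$ for all $i$ we have, by (2) applied with $z=\eta$ and successive pairs among the $\tau_i$ (all adjacent, as faces of $\eta$), that inserting $e_\eta$ anywhere in the product $e_{\tau_n}\cdots e_{\tau_1}e_{\tau_0}$ does not change it, and that one may then absorb or delete the $e_{\tau_i}$ for $0<i<n$. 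Concretely: $e_{\tau_{i+1}}e_{\tau_i} = e_{\tau_{i+1}}e_\eta e_{\tau_i}$ because $\tau_{i+1} \leq \eta$, i.e. $\eta \in H(\tau_{i+1},\tau_i)$ and $\eta$ is adjacent to — indeed contains — $\tau_{i+1}$; iterating and using that $e_\eta$ is idempotent and commutes with each $e_{\tau_i}$ (adjacency, property (1)) collapses the product to $e_{\tau_n}e_\eta e_{\tau_0}$. The base case $n=1$ is immediate: if $\tau_0 \leq \tau_1$ then $\eta = \tau_1$ and both sides are $\varepsilon^{\tau_0}_{\tau_1}$; if $\tau_1 \leq \tau_0$ then $\eta = \tau_0$ and both sides are $\varepsilon^{\tau_0}_{\tau_1}$.

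The induction itself would go: assuming the formula for paths of length $<n$, apply it to $\tau_1,\dots,\tau_n$ (still adjacent, still satisfying the face condition) to rewrite $\varepsilon^{\tau_{n-1}}_{\tau_n}\circ\cdots\circ\varepsilon^{\tau_1}_{\tau_2}$ as $\varepsilon^{\eta'}_{\tau_n}\circ\varepsilon^{\tau_1}_{\eta'}$ with $\eta' = [\tau_1,\dots,\tau_n]$, then handle the remaining three-term composite $\varepsilon^{\eta'}_{\tau_n}\circ\varepsilon^{\tau_1}_{\eta'}\circ\varepsilon^{\tau_0}_{\tau_1}$ by the operator computation above, noting $[\tau_0,\eta'] = \eta$. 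The step I expect to be the main obstacle is bookkeeping the translation between the composite of $p$'s and $\varphi$'s and the single operator $v \mapsto e_{\tau_n}\cdots e_{\tau_1}v$ on $V_x$ — i.e. checking carefully that the identifications $V_{\tau_i} \cong e_{\tau_i}V_x$ are mutually compatible (they are, because $\Gamma$ is an $e$-coefficient system, so $\varphi^\eta_{\tau_i}$ identifies $V_\eta$ with $e_\eta V_{\tau_i} = e_\eta V_x$ compatibly), so that the formula really is an identity of maps $V_{\tau_0} \to V_{\tau_n}$ and not just of operators up to these identifications. Once that is set up, everything reduces to the consistency relations and idempotency, which are routine.
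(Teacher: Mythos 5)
Your overall strategy coincides with the paper's: induct on $n$, apply the inductive hypothesis to $\tau_1,\dots,\tau_n$, and reduce the remaining composite to an identity among the idempotents via the consistency relations. Two steps, however, are wrong as written.

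First, the ``concrete'' justification of the idempotent identity is false. From $\tau_{i+1}\leq\eta$ you cannot conclude $\eta\in H(\tau_{i+1},\tau_i)$: the hull of two \emph{comparable} polysimplices is just the closure of the larger one (this is used in Corollary \ref{coradjacent}), which is in general strictly smaller than $\eta=[\tau_0,\dots,\tau_n]$. Accordingly, the local identity $e_{\tau_{i+1}}e_{\tau_i}=e_{\tau_{i+1}}e_\eta e_{\tau_i}$ is false in general: the left side equals $e_{[\tau_i,\tau_{i+1}]}$ while the right side equals $e_\eta$ (take $\tau_i=\tau_{i+1}$ a vertex of a strictly larger $\eta$). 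What you actually need is only the global identity $e_{\tau_n}\cdots e_{\tau_1}e_{\tau_0}=e_\eta=e_{\tau_n}e_\eta e_{\tau_0}$, and that does follow from your first, correct, observation $e_\beta e_\alpha=e_\beta e_{[\alpha,\beta]}e_\alpha=e_{[\alpha,\beta]}$ for adjacent $\alpha,\beta$, iterated; so this part is repairable, but the repair must go through $e_{[\alpha,\beta]}$, not through inserting $e_\eta$ pairwise.

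Second, the ambient space. You cannot identify every $V_{\tau_i}$ with a subspace of $V_x$ for one fixed vertex $x\leq\tau_0$: the identification $V_\sigma\cong e_\sigma(V_\tau)$ is only available when $\tau\leq\sigma$, and $x$ need not be a face of $\tau_i$ for $i\geq 1$ (e.g.\ $\tau_0=x$, $\tau_1=[x,y]$, $\tau_2=y$). So the reduction of the whole composite to the single operator $w\mapsto e_{\tau_n}\cdots e_{\tau_1}w$ on $e_{\tau_0}V_x$ is not set up, and your proposed fix (``$\varphi^\eta_{\tau_i}$ identifies $V_\eta$ with $e_\eta V_{\tau_i}=e_\eta V_x$'') does not typecheck for the same reason. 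This gap is harmless for the induction you then describe, because the leftover three-term composite $\varepsilon^{\eta'}_{\tau_n}\circ\varepsilon^{\tau_1}_{\eta'}\circ\varepsilon^{\tau_0}_{\tau_1}$ involves only pairwise nested polysimplices and can be computed inside $V_{\tau_0}$ or $V_{\tau_1}$ according to whether $\tau_0\leq\tau_1$ or $\tau_1\leq\tau_0$ --- which is exactly the two-case computation the paper carries out. But the ``everything lives in $V_x$'' picture should be dropped in favour of that case split.
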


\begin{proof}
Let us show the result by induction on $n$.

The result is clear for $n=1$. So let us take $\tau_{0},\cdots,\tau_{n}$ as in the statement and assume the result true for $n-1$. The path $\tau_{1},\cdots,\tau_{n}$ verifies the conditions of the statement thus by the induction hypothesis
\[ \varepsilon^{\tau_{n-1}}_{\tau_{n}} \circ \cdots \circ \varepsilon_{\tau_{2}}^{\tau_{1}} = \varepsilon^{[\tau_{1},\cdots,\tau_{n}]}_{\tau_{n}} \circ \varepsilon_{[\tau_{1},\cdots,\tau_{n}]}^{\tau_{1}}\]
and so
\[\varepsilon^{\tau_{n-1}}_{\tau_{n}} \circ \cdots \circ \varepsilon_{\tau_{1}}^{\tau_{0}} = \varepsilon^{[\tau_{1},\cdots,\tau_{n}]}_{\tau_{n}} \circ \varepsilon_{[\tau_{1},\cdots,\tau_{n}]}^{\tau_{1}} \circ \varepsilon_{\tau_{1}}^{\tau_{0}}.\]

There are two cases:
\begin{enumerate}
\item If $\tau_{0} \leq \tau_{1}$, then $\varepsilon^{\tau_{n-1}}_{\tau_{n}} \circ \cdots \circ \varepsilon_{\tau_{1}}^{\tau_{0}}$ corresponds to the map
\[ V_{\tau_{0}} \twoheadrightarrow e_{\tau_{1}}(V_{\tau_{0}})=V_{\tau_{1}} \twoheadrightarrow e_{[\tau_{1},\cdots,\tau_{n}]}(V_{\tau_{1}})=e_{[\tau_{1},\cdots,\tau_{n}]}( e_{\tau_{1}}(V_{\tau_{0}}))=V_{[\tau_{1},\cdots,\tau_{n}]} \hookrightarrow V_{\tau_{n}}.\]
But $e_{[\tau_{1},\cdots,\tau_{n}]} e_{\tau_{1}}=e_{[\tau_{1},\cdots,\tau_{n}]}$ and since $\tau_{0} \leq \tau_{1}$, $[\tau_{1},\cdots,\tau_{n}]=[\tau_{0},\cdots,\tau_{n}]$ and we have the result.
\item If $\tau_{0} \geq \tau_{1}$, then $\varepsilon^{\tau_{n-1}}_{\tau_{n}} \circ \cdots \circ \varepsilon_{\tau_{1}}^{\tau_{0}}$ corresponds to the map
\[ V_{\tau_{0}} \hookrightarrow e_{\tau_{0}}(V_{\tau_{1}}) \twoheadrightarrow e_{[\tau_{1},\cdots,\tau_{n}]}( e_{\tau_{0}}(V_{\tau_{1}}))\hookrightarrow V_{\tau_{n}}.\]
But
\[e_{[\tau_{1},\cdots,\tau_{n}]}( e_{\tau_{0}}(V_{\tau_{1}}))=e_{[\tau_{1},\cdots,\tau_{n}]} e_{\tau_{0}}(V_{\tau_{0}})\]
because $V_{\tau_{0}}=e_{\tau_{0}}(V_{\tau_{1}})$ and $e_{[\tau_{1},\cdots,\tau_{n}]} e_{\tau_{0}} = e_{[[\tau_{1},\cdots,\tau_{n}],\tau_{0}]} = e_{[\tau_{0},\cdots,\tau_{n}]}$. So the first two arrows correspond to the map
\[V_{\tau_{0}} \twoheadrightarrow e_{[\tau_{0},\cdots,\tau_{n}]}( V_{\tau_{0}})\]
and we have the result.
\end{enumerate}
\end{proof}

\begin{Cor}
\label{coradjacent}
Let $\tau$ and $\sigma$ be adjacent polysimplices. Then $ \varepsilon_{\sigma}^{\tau}$ does not depend on the admissible path chosen.
\end{Cor}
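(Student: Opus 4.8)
The plan is to prove the sharper statement that for \emph{every} admissible path $\tau_{0}=\tau,\cdots,\tau_{n}=\sigma$ one has
$\varepsilon_{\sigma}^{\tau}=\varepsilon_{\sigma}^{[\tau,\sigma]}\circ\varepsilon_{[\tau,\sigma]}^{\tau}$,
a formula which visibly does not involve the path. Here $\varepsilon_{[\tau,\sigma]}^{\tau}=p_{[\tau,\sigma]}^{\tau}$ and $\varepsilon_{\sigma}^{[\tau,\sigma]}=\varphi_{\sigma}^{[\tau,\sigma]}$ are the ``atomic'' maps, since $\tau\leq[\tau,\sigma]\geq\sigma$, so establishing this identity settles the corollary. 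The crux is that an admissible path between two adjacent polysimplices never leaves the closed cell $[\tau,\sigma]$; once that is known, Lemma~\ref{lemadjacent} does all the work.

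First I would check that, because $\tau$ and $\sigma$ are adjacent, every polysimplex of $H(\tau,\sigma)$ is a face of $[\tau,\sigma]$. Any apartment $A$ containing $\tau\cup\sigma$ is convex, hence contains the geodesic joining an interior point of $\tau$ to an interior point of $\sigma$; inside $[\tau,\sigma]$ this geodesic is the Euclidean segment between the two points, which passes through the interior of $[\tau,\sigma]$, so $A$ contains $[\tau,\sigma]$ and all its faces. For the reverse inclusion one intersects this over all chambers $c\geq[\tau,\sigma]$ and uses that the intersection of all apartments through a chamber $c$ is $\overline{c}$, to get $H(\tau,\sigma)\subseteq\bigcap_{c\geq[\tau,\sigma]}\overline{c}=\overline{[\tau,\sigma]}$. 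This is the only place where input on the building is needed, and if preferred it can be cited rather than reproved.

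Next I would show by induction on $i$ that $\tau_{i}\leq[\tau,\sigma]$ for all $i$: this is clear for $\tau_{0}=\tau$, and if $\tau_{i}\leq[\tau,\sigma]$ then $H(\tau_{i},\sigma)$ is, by the previous step applied to the adjacent pair $\tau_{i},\sigma$, the set of faces of $[\tau_{i},\sigma]\leq[\tau,\sigma]$, so condition~(2) of an admissible path, which gives $\tau_{i+1}\in H(\tau_{i},\sigma)$, forces $\tau_{i+1}\leq[\tau,\sigma]$. In particular the polysimplices $\tau_{0},\cdots,\tau_{n}$ are pairwise adjacent and $[\tau_{0},\cdots,\tau_{n}]=[\tau,\sigma]$ (it dominates $[\tau,\sigma]$ because it contains $\tau_{0}\cup\tau_{n}=\tau\cup\sigma$, and is dominated by $[\tau,\sigma]$ because each $\tau_{i}$ is a face of it). Condition~(3) gives $\tau_{i}\leq\tau_{i+1}$ or $\tau_{i+1}\leq\tau_{i}$ for every $i$, so Lemma~\ref{lemadjacent} applies to $\tau_{0},\cdots,\tau_{n}$ and yields
$\varepsilon_{\sigma}^{\tau}=\varepsilon_{\tau_{n}}^{[\tau_{0},\cdots,\tau_{n}]}\circ\varepsilon_{[\tau_{0},\cdots,\tau_{n}]}^{\tau_{0}}=\varepsilon_{\sigma}^{[\tau,\sigma]}\circ\varepsilon_{[\tau,\sigma]}^{\tau}$,
as desired. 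The main obstacle is exactly the first step: identifying the polysimplicial hull of two adjacent polysimplices with the closed cell they span; granting that, the rest is routine bookkeeping with Lemma~\ref{lemadjacent}.
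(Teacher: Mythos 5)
Your proof is correct and follows essentially the same route as the paper: show that every polysimplex of an admissible path between adjacent $\tau,\sigma$ is a face of $[\tau,\sigma]$, deduce $[\tau_{0},\cdots,\tau_{n}]=[\tau,\sigma]$, and apply Lemma~\ref{lemadjacent}. The only difference is cosmetic — the paper asserts $H(\tau,\sigma)=\overline{[\tau,\sigma]}$ without proof and gets $\tau_{i}\in H(\tau,\sigma)$ directly from admissibility, whereas you sketch a justification of the hull identity and reach $\tau_{i}\leq[\tau,\sigma]$ by a short induction.
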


\begin{proof}
Let $\tau_{0},\cdots,\tau_{n}$ be an admissible path between $\tau$ and $\sigma$. The polysimplices $\tau$ and $\sigma$ being adjacent, $H(\sigma,\tau)$ is the closure of $[\sigma,\tau]$. The path $\tau_{0},\cdots,\tau_{n}$ being admissible, for all $i \in \{0,\cdots,n\}$, we have $\tau_i \in H(\sigma,\tau)$ and so $\tau_i \leq [\sigma,\tau]$. Therefore the polysimplices  $\tau_{0},\cdots,\tau_{n}$ are adjacent and $[\tau_{0},\cdots,\tau_{n}] \leq [\sigma,\tau]$. Since $\sigma,\tau \leq [\tau_{0},\cdots,\tau_{n}]$, we have $[\sigma,\tau] \leq [\tau_{0},\cdots,\tau_{n}] $ and so $[\tau_{0},\cdots,\tau_{n}] = [\sigma,\tau]$. Lemma \ref{lemadjacent} tells us that $\varepsilon_{\sigma}^{\tau} = \varepsilon^{[\sigma,\tau]}_{\sigma} \circ \varepsilon_{[\sigma,\tau]}^{\tau}$ which is independent of the chosen path.
\end{proof}

\begin{Lem}
\label{lemsimpliepsilon}
Let $\tau_{0},\cdots,\tau_{n} \in \bt$ and $k,l \in \{0,\cdots,n\}$ with $k \leq l$. Let us assume that $\tau_{0},\cdots,\tau_{n}$ is an admissible path and that $\tau_{k},\cdots,\tau_{l}$ are adjacent. Then
\[\varepsilon^{\tau_{n-1}}_{\tau_{n}} \circ \cdots \circ \varepsilon_{\tau_{1}}^{\tau_{0}}=\varepsilon^{\tau_{n-1}}_{\tau_{n}} \circ \cdots \varepsilon^{\tau_{l}}_{\tau_{l+1}} \circ \varepsilon^{[\tau_{k},\cdots,\tau_{l}]}_{\tau_{l}} \circ\varepsilon^{\tau_{k}}_{[\tau_{k},\cdots,\tau_{l}]} \circ \varepsilon^{\tau_{k-1}}_{\tau_{k}} \circ \cdots \circ \varepsilon_{\tau_{1}}^{\tau_{0}}.\]
\end{Lem}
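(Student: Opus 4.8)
\textbf{Plan for Lemma \ref{lemsimpliepsilon}.} The statement is an ``internal insertion'' version of Lemma \ref{lemadjacent}: it says that inside a long composition of local steps $\varepsilon$, one may replace the consecutive block $\varepsilon^{\tau_{l-1}}_{\tau_l}\circ\cdots\circ\varepsilon^{\tau_k}_{\tau_{k+1}}$ coming from the adjacent sub-sequence $\tau_k,\dots,\tau_l$ by the two-step factorisation through $[\tau_k,\dots,\tau_l]$. The key observation is that the global admissibility of $\tau_0,\dots,\tau_n$ is only used to guarantee that each individual arrow $\varepsilon^{\tau_i}_{\tau_{i+1}}$ is defined (i.e.\ $\tau_i\le\tau_{i+1}$ or $\tau_{i+1}\le\tau_i$) and, more to the point, that the sub-sequence $\tau_k,\dots,\tau_l$ satisfies the hypotheses of Lemma \ref{lemadjacent}: by admissibility each of $\tau_k,\dots,\tau_l$ is comparable to its neighbour, and they are adjacent by assumption. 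So the proof is essentially an application of Lemma \ref{lemadjacent} to the middle block, tensored with the trivial fact that precomposing and postcomposing both sides of an equality of maps by the same fixed maps preserves the equality.

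\textbf{Steps.} First I would fix the factorisation of the full composite as
\[
\varepsilon^{\tau_{n-1}}_{\tau_n}\circ\cdots\circ\varepsilon^{\tau_0}_{\tau_1}
= \Bigl(\varepsilon^{\tau_{n-1}}_{\tau_n}\circ\cdots\circ\varepsilon^{\tau_l}_{\tau_{l+1}}\Bigr)
\circ\Bigl(\varepsilon^{\tau_{l-1}}_{\tau_l}\circ\cdots\circ\varepsilon^{\tau_k}_{\tau_{k+1}}\Bigr)
\circ\Bigl(\varepsilon^{\tau_{k-1}}_{\tau_k}\circ\cdots\circ\varepsilon^{\tau_0}_{\tau_1}\Bigr),
\]
with the convention that the outer blocks are identities when $l=n$ or $k=0$ respectively. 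Next I would check that the middle block is exactly $\varepsilon^{\tau_{l-1}}_{\tau_l}\circ\cdots\circ\varepsilon^{\tau_k}_{\tau_{k+1}}$ in the notation of Lemma \ref{lemadjacent} applied to the sequence $\tau_k,\dots,\tau_l$: its hypotheses hold because $\tau_k,\dots,\tau_l$ are adjacent by assumption and each consecutive pair is comparable by admissibility of $\tau_0,\dots,\tau_n$ (condition (3) of the definition of admissible path). Lemma \ref{lemadjacent} then gives
\[
\varepsilon^{\tau_{l-1}}_{\tau_l}\circ\cdots\circ\varepsilon^{\tau_k}_{\tau_{k+1}}
= \varepsilon^{[\tau_k,\dots,\tau_l]}_{\tau_l}\circ\varepsilon^{\tau_k}_{[\tau_k,\dots,\tau_l]}.
\]
Finally I would substitute this identity into the middle factor of the displayed factorisation and recombine with the two outer blocks to obtain the asserted formula.

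\textbf{Main obstacle.} There is no real obstacle: the content is entirely in Lemma \ref{lemadjacent}, which is already proved, and the only thing to be careful about is bookkeeping — keeping the indices straight when $k=0$ or $l=n$ (so that the ``outer'' compositions degenerate to identities), and making sure that the source of $\varepsilon^{\tau_k}_{[\tau_k,\dots,\tau_l]}$ is indeed $V_{\tau_k}$, which is the target of $\varepsilon^{\tau_{k-1}}_{\tau_k}$, and the target of $\varepsilon^{[\tau_k,\dots,\tau_l]}_{\tau_l}$ is $V_{\tau_l}$, which is the source of $\varepsilon^{\tau_l}_{\tau_{l+1}}$, so that all the composites in the recombined formula actually typecheck. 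A degenerate sanity check ($k=l$, where $[\tau_k,\dots,\tau_l]=\tau_k$ and both new arrows are identities) confirms consistency.
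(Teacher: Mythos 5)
Your proposal is correct and follows essentially the same route as the paper: split off the middle block, apply Lemma \ref{lemadjacent} to the adjacent comparable subsequence $\tau_k,\dots,\tau_l$, and recompose. The paper additionally invokes Lemma \ref{lemCheminsTendus}~3 to record that the modified sequence is again an admissible path, but the equality of maps itself is obtained exactly as you describe.
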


\begin{proof}
Lemma \ref{lemCheminsTendus} 3. ensures that $\tau_{0},\cdots,\tau_{k},[\tau_{k},\cdots,\tau_{l}],\tau_{l},\cdots,\tau_{n}$ is an admissible path. Now
\[\varepsilon^{\tau_{n-1}}_{\tau_{n}} \circ \cdots \circ \varepsilon_{\tau_{1}}^{\tau_{0}}=\varepsilon^{\tau_{n-1}}_{\tau_{n}} \circ \cdots \varepsilon^{\tau_{l}}_{\tau_{l+1}} \circ \varepsilon^{\tau_{l-1}}_{\tau_{l}} \circ \cdots \circ \varepsilon^{\tau_{k}}_{\tau_{k+1}} \circ \varepsilon^{\tau_{k-1}}_{\tau_{k}} \circ \cdots \circ \varepsilon_{\tau_{1}}^{\tau_{0}}.\]
Since $\tau_{k},\cdots,\tau_{l}$ are adjacent, Lemma \ref{lemadjacent} gives us
\[\varepsilon^{\tau_{l-1}}_{\tau_{l}} \circ \cdots \circ \varepsilon^{\tau_{k}}_{\tau_{k+1}} = \varepsilon^{[\tau_{k},\cdots,\tau_{l}]}_{\tau_{l}} \circ\varepsilon^{\tau_{k}}_{[\tau_{k},\cdots,\tau_{l}]}.\]
And we get the wanted result.
\end{proof}

\begin{Def}
Let $\tau,\sigma \in \bt$. We define $\rho(\tau,\sigma)$ to be the number of polysimplices in $H(\tau,\sigma)$.{}
\end{Def}

\begin{Lem}
\label{lemsimplificationchemin}
Let $\tau_{0},\cdots,\tau_{n}$ be an admissible path and $\omega \in \bt$ such that for all $i$, $\tau_{n} \in H(\tau_{i},\omega)$. Then there exists another admissible path $\tau_{0}',\cdots,\tau_{m}'$ such that
\begin{enumerate}
\item $\tau_{0}'=\tau_{0}$ and $\tau_{m}'=\tau_{n}$
\item $\varepsilon^{\tau_{n-1}}_{\tau_{n}} \circ \cdots \circ \varepsilon_{\tau_{1}}^{\tau_{0}} = \varepsilon^{\tau_{m-1}'}_{\tau_{m}'} \circ \cdots \circ \varepsilon_{\tau_{1}'}^{\tau_{0}'}$
\item if $\tau_{i}'>\tau_{i+1}'$ and $i+1 < m$ then $\rho(\tau_{i+1}',\omega)<\rho(\tau_{i}',\omega)$
\item For all $i$, $\tau_{m}' \in H(\tau_{i}',\omega)$
\end{enumerate}
\end{Lem}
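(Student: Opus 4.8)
The statement asks, given an admissible path $\tau_0,\dots,\tau_n$ with the property that $\tau_n \in H(\tau_i,\omega)$ for all $i$, to produce a "cleaner" admissible path $\tau_0',\dots,\tau_m'$ with the same endpoints, computing the same composite $\varepsilon$-map, still satisfying the hull condition with respect to $\omega$, and additionally having the monotonicity property (3): whenever a down-step $\tau_i' > \tau_{i+1}'$ occurs before the last position, the quantity $\rho(\cdot,\omega)$ strictly drops. The natural approach is an induction on the pair $(n,\#\{i : \tau_i > \tau_{i+1}\})$ — or more simply on $n$ together with a secondary induction measuring "bad" down-steps — using Lemma~\ref{lemsimpliepsilon} as the one tool that lets us merge a block of mutually adjacent polysimplices into a single $\tau_k \leq [\tau_k,\dots,\tau_l] \geq \tau_l$ wedge while provably not changing the composite map, and Lemma~\ref{lemCheminsTendus}(3) to see that the resulting sequence is still admissible.

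\textbf{Key steps.} First I would dispose of trivial down-steps: if $\tau_i > \tau_{i+1}$ but $\tau_{i+1}$ and $\tau_{i+2}$ (or more generally a maximal run around position $i+1$) are adjacent to a common face through which we can route, we use Lemma~\ref{lemadjacent}/\ref{lemsimpliepsilon} to replace the offending stretch. The real content is showing we can always arrange (3). Suppose $\tau_i > \tau_{i+1}$ with $i+1 < m$ and $\rho(\tau_{i+1},\omega) = \rho(\tau_i,\omega)$; since $\tau_{i+1} \leq \tau_i$ always gives $H(\tau_{i+1},\omega) \subseteq H(\tau_i,\omega)$, equal cardinality forces $H(\tau_{i+1},\omega) = H(\tau_i,\omega)$, hence $\tau_i \in H(\tau_{i+1},\omega)$. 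Then $\tau_i, \tau_{i+1}$ are adjacent (indeed $\tau_{i+1} \leq \tau_i$), and also $\tau_{i+2}$ lies in $H(\tau_{i+1},\omega) = H(\tau_i,\omega)$; combined with $\tau_{i+1} \leq \tau_{i+2}$ or $\tau_{i+2} \leq \tau_{i+1}$, one checks $\tau_i, \tau_{i+1}, \tau_{i+2}$ are adjacent. Applying Lemma~\ref{lemsimpliepsilon} with $k=i$, $l=i+2$ merges them into the wedge through $[\tau_i,\tau_{i+1},\tau_{i+2}]$, which is an \emph{up}-step followed by a \emph{down}-step, eliminating the bad down-step at position $i$ and strictly decreasing the down-step count (or the length). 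Iterating, the process terminates; at each stage we must re-verify that the hull property $\tau_n \in H(\tau_i',\omega)$ is preserved, which follows because merging only inserts faces of polysimplices already in the path, so the new $H(\tau_i',\omega)$'s still contain $\tau_n$ (here one uses that $[\tau_k,\dots,\tau_l] \in H(\tau_k,\omega)$, itself a consequence of all $\tau_j \in H(\tau_k,\omega) \subseteq$ apartments, as in the proof of Lemma~\ref{lemCheminsTendus}).

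\textbf{Main obstacle.} The delicate point is not any single rewriting but the \emph{termination argument}: naively eliminating one bad down-step might create another one further along, or lengthen the path. I expect the fix is to choose the measure carefully — e.g. the multiset $\{\rho(\tau_i,\omega)\}_i$ ordered lexicographically, or a potential combining $\max_i \rho(\tau_i,\omega)$ on down-steps with path length — and to always operate on the \emph{first} bad down-step, arguing that the merge never increases $\rho(\tau_j,\omega)$ at any surviving position $j$ and strictly decreases the potential. A second subtlety is the boundary case "$i+1 < m$": a bad down-step landing exactly at the penultimate position is explicitly allowed by (3), so the induction should treat the last two entries of the path as fixed and only clean up the strictly interior part; one must check that the merge operations used never disturb $\tau_{m-1}', \tau_m'$. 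Once the measure is pinned down, conditions (1), (2), (4) are immediate from the endpoints being untouched, from Lemma~\ref{lemsimpliepsilon} (which states the equality of composites), and from the hull-preservation observation above, respectively.
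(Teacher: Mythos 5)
Your local-rewriting strategy is genuinely different from the paper's proof, and its core step is salvageable: when a down-step $\tau_i>\tau_{i+1}$ is ``bad'' (i.e.\ $\rho(\tau_{i+1},\omega)=\rho(\tau_i,\omega)$, hence $H(\tau_{i+1},\omega)=H(\tau_i,\omega)$), both $\tau_i$ and $\tau_{i+2}$ lie in $H(\tau_{i+1},\omega)$ and contain or are contained in $\tau_{i+1}$, so by Lemma~\ref{lemmaxi} all three are faces of the maximal polysimplex $\overline{\tau_{i+1}}$ of $H(\tau_{i+1},\omega)$ and are therefore adjacent --- that is the content hidden behind your ``one checks'', and it does work. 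The genuine gap is the termination argument, exactly where you flag it. Your stated progress measure is wrong: replacing $\tau_i,\tau_{i+1},\tau_{i+2}$ by $\tau_i,[\tau_i,\tau_{i+1},\tau_{i+2}],\tau_{i+2}$ neither shortens the path nor decreases the number of down-steps when the original step $\tau_{i+1}\to\tau_{i+2}$ was an up-step (one down-step at position $i$ is traded for one at position $i+1$, which may again be bad). Neither of your proposed potentials (the multiset of $\rho$-values, or $\max_i\rho(\tau_i,\omega)$ on down-steps) obviously decreases either, since the inserted join can have a strictly larger hull than anything it replaces. What does work is the observation that the rewriting never touches positions $\le i$ and turns position $i$ into an up-step, so the leftmost bad down-step moves strictly to the right until it either disappears or reaches position $m-1$, where condition~(3) imposes nothing; but that measure is not the one you propose, and without it the proof is incomplete.

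For comparison, the paper avoids any iteration: it performs a single greedy decomposition of the path into maximal adjacent blocks $\tau_{k_l},\dots,\tau_{k_{l+1}}$, sets $\tau'_{2l}=\tau_{k_l}$ and $\tau'_{2l+1}=[\tau_{k_l},\dots,\tau_{k_{l+1}}]$, and gets admissibility and the equality of composites from Lemma~\ref{lemCheminsTendus}(3) and Lemma~\ref{lemsimpliepsilon} in one pass. Property~(3) is then proved directly: by \emph{maximality} of the block, $\tau_{k_{l+1}+1}$ is not adjacent to $\tau'_{2l+1}$, so an affine root $a$ separates them, and the hypothesis $\tau_n\in H(\tau_i,\omega)$ forces $a_{|\omega}\ge 0$, whence $\tau'_{2l+1}\notin H(\tau'_{2l+2},\omega)$ and the hull strictly shrinks. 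So where you use $\rho$-equality to \emph{produce} adjacency and merge, the paper uses non-adjacency (from maximality) to \emph{produce} a separating root and conclude the strict inequality; the paper's route needs no termination argument at all. Your approach could be completed, but as written the key claim of progress is false and the decisive measure is missing.
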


\begin{proof}
Let $k_{1}=\max \{ i>0 | \tau_{0},\cdots,\tau_{i} \text{ are adjacent}\}$ (this set is non empty because $\tau_{1}$ and $\tau_{0}$ are adjacent). Likewise, if $k_{i}$ is defined and different from $n$, we define $k_{i+1}=\max \{ j>k_{i} | \tau_{k_{i}},\cdots,\tau_{j} \text{ are adjacent}\}$. So we have just defined $m'$ integers, $k_{1},\cdots,k_{m'}$ with $k_{m'}=n$. Let $k_{0}=0$ and $m=2m'$.

Let us define for $i\in \{0,\cdots,m'\}$, $\tau_{2i}'=\tau_{k_{i}}$, and for $i\in \{0,\cdots,m'-1\}$, $\tau_{2i+1}'=[\tau_{k_{i}},\cdots,\tau_{k_{i+1}}]$.

Lemma \ref{lemCheminsTendus} 3. applied $m'$ times ensures that $\tau_{0}',\cdots,\tau_{m}'$ is an admissible path. Let us verify that this path satisfies the requested properties.

\begin{enumerate}
\item It is clear that $\tau_{0}'=\tau_{0}$ and $\tau_{m}'=\tau_{n}$.

\item Lemma \ref{lemsimpliepsilon} applied $m'$ times gives us the result.

\item 
We can only have $\tau_{i}'>\tau_{i+1}'$ if $i=2l+1$ is odd. So all we have to do is to show that $H(\tau_{2l+2}',\omega)\varsubsetneq H(\tau_{2l+1}',\omega)$.

By definition of $k_{l+1}$, $\tau_{k_{l+1}+1}$ and $\tau_{2l+1}'$ are not adjacent ($\tau_{k_{l+1}+1}$ exists because by hypothesis $i+1<m$). Take $\mathcal{A}$ an apartment containing $H(\tau_{k_{l}},\omega)$. By hypothesis $\tau_{n} \in H(\tau_{k_{l}},\omega)$ and so $H(\tau_{k_{l}},\tau_{n}) \subseteq H(\tau_{k_{l}},\omega)$. Hence $\tau_{k_{l}}, \cdots, \tau_{n} \in \mathcal{A}$. Since $\tau_{k_{l+1}+1}$ and $\tau_{2l+1}'$ are not adjacent, there exists an affine root $a$ which separates $\tau_{k_{l+1}+1}$ and $\tau_{2l+1}'$. Let us say for example that $a_{|\tau_{k_{l+1}+1}}>0$ and $a_{|\tau_{2l+1}'}<0$. Notice that $\tau_{k_{l+1}+1} \in H(\tau_{k_{l+1}},\tau_{n})\subseteq H(\tau_{k_{l+1}},\omega)=H(\tau_{2l+2}',\omega) \subseteq H(\tau_{2l+1}',\omega)$. Hence if $a_{|\omega}<0$, since $\tau_{k_{l+1}+1} \in H(\tau_{2l+1}',\omega)$ we would get that $a_{|\tau_{k_{l+1}+1}}\leq 0$ which is not. So $a_{|\omega}\geq 0$ and $a$ is an affine root which separates $\tau_{2l+1}'$ from $H(\tau_{k_{l+1}+1},\omega)$ and so $\tau_{2l+1}' \notin H(\tau_{k_{l+1}+1},\omega)$. But $\tau_{k_{l+1}+1}\leq \tau_{k_{l+1}}$ or $\tau_{k_{l+1}+1}\geq \tau_{k_{l+1}}$. We cannot have $\tau_{k_{l+1}+1}\leq \tau_{k_{l+1}}$ because if not $\tau_{k_{l}},\cdots,\tau_{k_{l+1}+1}$ would be adjacent. So $\tau_{k_{l+1}+1}\geq \tau_{k_{l+1}}$. Thus $H(\tau_{k_{l+1}+1},\omega) \supseteq H(\tau_{k_{l+1}},\omega)=H(\tau_{2l+2}',\omega)$. So $\tau_{2l+1}' \notin H(\tau_{2l+2}',\omega)$ which completes the proof.

\item We wish to show that for every $i$, $\tau_{n} \in H(\tau_{i}',\omega)$. If $i=2l$ is even, the result is immediate since $\tau_{2l}'=\tau_{k_{l}}$. We are left with the case $i=2l+1$ odd, that is, we want to show that $\tau_{n} \in H([\tau_{k_{l}},\cdots,\tau_{k_{l+1}}],\omega)$. But $\tau_{k_{l}} \leq [\tau_{k_{l}},\cdots,\tau_{k_{l+1}}]$ so $H(\tau_{k_{l}},\omega) \subseteq H([\tau_{k_{l}},\cdots,\tau_{k_{l+1}}],\omega)$ and $\tau_{n} \in H([\tau_{k_{l}},\cdots,\tau_{k_{l+1}}],\omega)$.
\end{enumerate}
\end{proof}

\begin{Lem}
\label{lemmaxi}
Let $\tau,\sigma \in \bt$. Then there exists a unique maximal polysimplex $\overline{\tau} \in H(\tau,\sigma)$ with $\tau \leq \overline{\tau}$. That is, a polysimplex $\omega$ satisfies $\omega \in H(\tau,\sigma)$ and $\tau \leq \omega$ if and only of $\tau \leq \omega \leq \overline{\tau}$.
\end{Lem}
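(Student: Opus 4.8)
The plan is to reduce everything to the combinatorics of signs of affine roots inside a single apartment. Fix an apartment $\mathcal{A}$ of $\bt$ containing $\tau$ and $\sigma$; then $H(\tau,\sigma)\subseteq\mathcal{A}$, since $\mathcal{A}$ is one of the apartments in the intersection defining it. For each wall $M$ of $\mathcal{A}$ choose an affine form $a_{M}$ with zero set $M$ (so the affine roots of $\mathcal{A}$ are the $\pm a_{M}$), and for a polysimplex $\rho\subseteq\mathcal{A}$ let $\mathrm{sgn}_{M}(\rho)\in\{-,0,+\}$ be the sign of $a_{M}$ on the relative interior of $\rho$; these sign vectors are in bijection with the polysimplices of $\mathcal{A}$. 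I would use two standard facts about the geometry of apartments: (i) $\rho\le\rho'$ if and only if $\mathrm{sgn}_{M}(\rho)\in\{0,\mathrm{sgn}_{M}(\rho')\}$ for every $M$; and (ii) writing $H(\tau,\sigma)$ as the intersection of the closed half-apartments containing $\tau\cup\sigma$, a polysimplex $\rho'$ lies in $H(\tau,\sigma)$ if and only if for each $M$ one has $\mathrm{sgn}_{M}(\rho')\in D_{M}$, where $D_{M}\subseteq\{-,0,+\}$ is determined by the pair $\{\mathrm{sgn}_{M}(\tau),\mathrm{sgn}_{M}(\sigma)\}$: namely $D_{M}=\{0\}$ if both are $0$; $D_{M}=\{-,0\}$ if both lie in $\{-,0\}$ but are not both $0$; symmetrically $D_{M}=\{0,+\}$; and $D_{M}=\{-,0,+\}$ if the two signs are opposite. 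In particular $\mathrm{sgn}_{M}(\tau),\mathrm{sgn}_{M}(\sigma)\in D_{M}$ for all $M$, since $\tau,\sigma\in H(\tau,\sigma)$.

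I would then construct $\overline{\tau}$ explicitly as the ``first polysimplex met when moving from $\tau$ toward $\sigma$'' (i.e. the combinatorial projection of $\sigma$ onto the residue of $\tau$). Pick a point $t$ in the relative interior of $\tau$ and a point $s$ in the relative interior of $\sigma$, and set $p_{\epsilon}=(1-\epsilon)t+\epsilon s$. Since $a_{M}(p_{\epsilon})=(1-\epsilon)a_{M}(t)+\epsilon a_{M}(s)$ has, for $\epsilon>0$ small, the sign of $a_{M}(t)$ when $a_{M}(t)\neq0$ and the sign of $a_{M}(s)$ when $a_{M}(t)=0$, and since the wall arrangement of $\mathcal{A}$ is locally finite (so the segment $[t,s]$ crosses only finitely many walls), there is an $\epsilon_{0}>0$ such that $p_{\epsilon}$ lies in the relative interior of a fixed polysimplex for all $0<\epsilon<\epsilon_{0}$; call it $\overline{\tau}$. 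By construction $\mathrm{sgn}_{M}(\overline{\tau})=\mathrm{sgn}_{M}(\tau)$ whenever $\mathrm{sgn}_{M}(\tau)\neq0$, and $\mathrm{sgn}_{M}(\overline{\tau})=\mathrm{sgn}_{M}(\sigma)$ whenever $\mathrm{sgn}_{M}(\tau)=0$.

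It then remains to check three points, each reducing to a one-wall case distinction via (i) and (ii). First, $\tau\le\overline{\tau}$ is immediate from the description of the sign vector of $\overline{\tau}$. Second, $\overline{\tau}\in H(\tau,\sigma)$: for each $M$, $\mathrm{sgn}_{M}(\overline{\tau})$ equals either $\mathrm{sgn}_{M}(\tau)$ or $\mathrm{sgn}_{M}(\sigma)$, both of which lie in $D_{M}$. Third, if $\omega\in H(\tau,\sigma)$ with $\tau\le\omega$, then $\omega\le\overline{\tau}$: for a wall $M$ with $\mathrm{sgn}_{M}(\omega)\neq0$, either $\mathrm{sgn}_{M}(\tau)\neq0$, in which case $\tau\le\omega$ forces $\mathrm{sgn}_{M}(\omega)=\mathrm{sgn}_{M}(\tau)=\mathrm{sgn}_{M}(\overline{\tau})$, or $\mathrm{sgn}_{M}(\tau)=0$, in which case $\mathrm{sgn}_{M}(\omega)\in D_{M}\setminus\{0\}$ forces $\mathrm{sgn}_{M}(\omega)=\mathrm{sgn}_{M}(\sigma)=\mathrm{sgn}_{M}(\overline{\tau})$. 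Conversely, any $\omega$ with $\tau\le\omega\le\overline{\tau}$ lies in $H(\tau,\sigma)$ because $H(\tau,\sigma)$, being an intersection of closed half-apartments, is a subcomplex of $\mathcal{A}$ and $\overline{\tau}\in H(\tau,\sigma)$. Hence the polysimplices $\omega\in H(\tau,\sigma)$ with $\tau\le\omega$ are exactly those with $\tau\le\omega\le\overline{\tau}$, and $\overline{\tau}$ is their maximum, hence the unique maximal one. The only genuinely delicate point is the existence of $\overline{\tau}$ — that the ``obvious'' sign vector is actually realised by a polysimplex — which is precisely what the perturbation argument with $p_{\epsilon}$ supplies; the rest is bookkeeping with the two characterisations above.
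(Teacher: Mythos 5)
Your proof is correct. The paper's own proof of this lemma is a one-line citation of Meyer--Solleveld, Lemma 2.9 (with $x$ replaced by $\sigma$), and your argument --- describing $H(\tau,\sigma)$ inside a fixed apartment via the half-apartment/sign-vector conditions and producing $\overline{\tau}$ by perturbing an interior point of $\tau$ toward an interior point of $\sigma$ --- is exactly that argument written out in full, so it is essentially the same approach.
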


\begin{proof}
The proof is identical to Lemma 2.9 in \cite{meyer_resolutions_2010} by replacing $x$ with $\sigma$.
\end{proof}

\begin{Lem}
\label{lemhyperplan}
Let $\tau,\sigma,\overline{\tau}$ like in Lemma \ref{lemmaxi}. Let $\mathcal{A}$ be an apartment containing $H(\sigma,\tau)$ and $a$ an affine root. Then if $a_{|\overline{\tau}}=0$ we get $a_{|H(\sigma,\tau)}=0$.
\end{Lem}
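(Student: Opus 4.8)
The claim is: if $\tau,\sigma,\overline{\tau}$ are as in Lemma \ref{lemmaxi} (so $\overline{\tau}$ is the unique maximal polysimplex of $H(\sigma,\tau)$ containing $\tau$), $\mathcal{A}$ is an apartment containing $H(\sigma,\tau)$, and $a$ is an affine root with $a_{|\overline{\tau}} = 0$, then $a_{|H(\sigma,\tau)} = 0$. The plan is to argue by contradiction: suppose some polysimplex of $H(\sigma,\tau)$ is not contained in the hyperplane $\partial a = \{a = 0\}$, equivalently (since $H(\sigma,\tau)$ is a union of polysimplices and $a$ is affine, hence of constant sign on the interior of each polysimplex and zero on a polysimplex iff that polysimplex lies in $\partial a$) that $H(\sigma,\tau)$ is not contained in the wall $\partial a$. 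I would then produce from this a polysimplex strictly larger than $\overline{\tau}$ that still lies in $H(\sigma,\tau)$ and contains $\tau$, contradicting the maximality part of Lemma \ref{lemmaxi}.

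**The geometric construction.** Work inside the apartment $\mathcal{A}$, which I identify with a Euclidean (poly)simplicial space on which the affine root $a$ is an affine functional. Since $\overline{\tau} \subseteq \partial a$, the polysimplex $\overline{\tau}$ lies on the wall. The polysimplicial hull $H(\sigma,\tau)$ is convex (it is an intersection of apartments, hence a convex subset of $\mathcal{A}$ cut out by affine roots) and stable under the order relation, i.e. it is a subcomplex. If $H(\sigma,\tau) \not\subseteq \partial a$, then $H(\sigma,\tau)$ meets at least one of the open half-spaces $\{a > 0\}$ or $\{a < 0\}$; since $H(\sigma,\tau)$ is cut out by affine roots and contains $\overline\tau$ on the wall $\partial a$, I expect that in fact it must meet both half-spaces, or at least that one can reflect/transport in such a way as to move off the wall. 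The idea is to take a polysimplex $\omega$ of $H(\sigma,\tau)$ adjacent to $\overline{\tau}$ (sharing the face $\overline{\tau}$, or at least a face along which one can ``grow'' past the wall) with $a_{|\omega} \neq 0$, and consider $[\overline{\tau},\omega]$ or the appropriate larger face: because $H(\sigma,\tau)$ is a subcomplex and $\overline\tau \leq [\overline\tau,\omega]$, we still have $[\overline\tau,\omega] \in H(\sigma,\tau)$ and $\tau \leq \overline\tau \leq [\overline\tau,\omega]$, but $[\overline\tau,\omega] \neq \overline\tau$ since it is not contained in the wall whereas $\overline\tau$ is. This contradicts maximality of $\overline\tau$.

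**Where the real work is, and an alternative route.** The delicate point is showing that failure of $a_{|H(\sigma,\tau)} = 0$ genuinely forces the existence of such a polysimplex $\omega$ adjacent to $\overline{\tau}$ and off the wall — one has to use that $H(\sigma,\tau)$ is a ``full'' convex subcomplex (a union of closed polysimplices, closed under faces, and convex), so that any interior point near $\overline{\tau}$ lies in a polysimplex having $\overline{\tau}$ as a face; combined with convexity this pins down the local structure of $H(\sigma,\tau)$ near $\overline\tau$. I would spell this out using the description of $H(\sigma,\tau)$ as the intersection of the half-apartments (root half-spaces) containing $\sigma \cup \tau$: $\overline\tau$ maximal with $\tau \le \overline\tau$ means that for every wall through $\overline\tau$ one of the two sides is already excluded except walls $\partial b$ with $\overline\tau \subseteq \partial b$, and then the proof reduces to showing $a$ is one of the walls defining $H(\sigma,\tau)$ from $\overline\tau$'s perspective — at which point $\sigma, \tau$ both lie on the $\partial a$ side, forcing $a_{|\sigma} = a_{|\tau}$-sign consistency, and ultimately $a_{|\sigma}=a_{|\tau}=0$ hence $a_{|H(\sigma,\tau)}=0$. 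Alternatively — and this is likely the cleanest route — I would mimic the proof of \cite[Lem. 2.9]{meyer_resolutions_2010} (which underlies Lemma \ref{lemmaxi}): that proof must already control exactly which walls pass through $\overline\tau$, and the present lemma should drop out as a corollary of that analysis with $x$ replaced by $\sigma$, just as Lemma \ref{lemmaxi} did. The main obstacle is purely bookkeeping: carefully translating ``subcomplex + convex + contains $\sigma\cup\tau$'' into the statement that a wall $\partial a$ containing $\overline\tau$ must contain all of $H(\sigma,\tau)$, without a hidden use of the simplicial (rather than polysimplicial) structure that would restrict us to $\gl{n}$.
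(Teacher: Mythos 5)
Your overall strategy is the paper's: argue by contradiction and exhibit a polysimplex $\omega \in H(\sigma,\tau)$ with $\tau \leq \overline{\tau} < \omega$ and $a_{|\omega} \neq 0$, contradicting the maximality in Lemma \ref{lemmaxi}. But the existence of such an $\omega$ is the entire content of the lemma, and your proposal leaves it unproved: you explicitly flag it as ``the delicate point'' and then offer two unexecuted sketches (a half-apartment bookkeeping argument, or ``mimic Meyer--Solleveld''). Moreover, the one concrete step you do write down is flawed: from $\omega \in H(\sigma,\tau)$ merely adjacent to $\overline{\tau}$ you infer $[\overline{\tau},\omega] \in H(\sigma,\tau)$ ``because $H(\sigma,\tau)$ is a subcomplex and $\overline{\tau} \leq [\overline{\tau},\omega]$'' --- this is backwards. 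A subcomplex is closed under passing to \emph{faces}, not under forming the join $[\cdot,\cdot]$ of two of its members; two adjacent polysimplices of the hull need not have their join in the hull, and it is not even clear a priori that the $\omega$ you want is adjacent to $\overline{\tau}$. Your side remark that the hull must meet both open half-spaces of $\partial a$ is also unjustified and unnecessary.

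The paper closes the gap with a short geodesic argument that you should compare with. Pick $x \in H(\sigma,\tau)$ with $a(x) \neq 0$ and let $\varphi:[0,1]\to\bt$ be the geodesic from an \emph{interior} point of $\overline{\tau}$ to $x$; it stays in $H(\sigma,\tau)$ by convexity of the hull, and since $a$ is affine with $a(\varphi(0))=0$ and $a(\varphi(1))\neq 0$, we get $a(\varphi(t))\neq 0$ for all $t>0$. Letting $\omega$ be the polysimplex carrying $\varphi(t)$ for small $t>0$, the closure of $\omega$ contains the interior point $\varphi(0)$ of $\overline{\tau}$, so $\omega$ and $\overline{\tau}$ are comparable; as $\omega \not\subseteq \ker(a)$ while $\overline{\tau} \subseteq \ker(a)$, necessarily $\overline{\tau} < \omega$, and $\omega \in H(\tau,\sigma)$ because its interior meets the hull. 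This produces the required contradiction directly, without any claim about joins or about which walls cut out the hull. As written, your proposal is a plan rather than a proof, and its only attempted execution of the key step does not go through.
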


\begin{proof}
Assume that $a_{|\overline{\tau}}=0$. Denote by $H=\ker(a)$ an affine hyperplane of $\mathcal{A}$. We then need to show that $H(\sigma,\tau) \subseteq H$. Let us prove the result by contradiction and assume that $H(\sigma,\tau) \nsubseteq H$. Take $x \in H(\sigma,\tau) \backslash H$.

Let $\varphi : [0,1] \rightarrow \bt$ be a geodesic between an interior point of $\overline{\tau}$ and $x$. Since $\varphi(0) \in H$ if there were $t\in ]0,1]$ such that $\varphi(t)\in H$, we would have that for every $t \in [0,1]$, $\varphi(t) \in H$ and in particular that $x \in H$ which is not. So $\varphi(]0,1]) \cap H = \emptyset$. Each $\varphi(t)$ belongs to a polysimplex $\tau(t)$ and the map $t \mapsto \tau(t)$ is piecewise constant, so we can chose $t_{0}$ the smallest positive real where we have a jump. Let $\omega = \tau(t_{0}/2)$. We must have $\overline{\tau} \leq \omega$ or $\omega \leq \overline{\tau}$. But $\omega \nsubseteq H$ and $\overline{\tau} \subseteq H$ so $\overline{\tau} < \omega$. But $\varphi(t_{0}/2) \in H(\tau,\sigma)$ so $\omega \in H(\tau,\sigma)$ which contradicts the definition of $\overline{\tau}$.
\end{proof}

\begin{Lem}
\label{lemenveloppemaximal}
Let $\tau,\omega,\sigma \in \bt$ be polysimplices with $\omega \in H(\tau,\sigma)$. Take, as in Lemma \ref{lemmaxi}, $\overline{\tau}$ maximal in $H(\tau,\sigma)$ such that $\tau \leq \overline{\tau}$ and $\overline{\omega}$ maximal in $H(\omega,\sigma)$ such that $\omega \leq \overline{\omega}$. Then $\overline{\tau} \in H(\tau,\overline{\omega})$.
\end{Lem}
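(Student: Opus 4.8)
The plan is to work inside a fixed apartment $\mathcal{A}$ containing $H(\tau,\sigma)$ (hence also $H(\omega,\sigma)$, $H(\tau,\overline\omega)$, $H(\tau,\sigma)$ and all the relevant hulls, since $\omega,\overline\omega \in H(\tau,\sigma)$), and to characterise polysimplicial hulls by affine roots: for polysimplices $a,b$ of $\mathcal{A}$, a polysimplex $c$ lies in $H(a,b)$ iff every affine root $\alpha$ taking a constant nonnegative (resp. nonpositive) value on $a\cup b$ takes the same sign on $c$; equivalently, $H(a,b)$ is the smallest polysimplicial convex subset of $\mathcal{A}$ containing $a$ and $b$. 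So to prove $\overline\tau \in H(\tau,\overline\omega)$ it suffices to show: for every affine root $\alpha$ that is $\geq 0$ on $\tau \cup \overline\omega$, one has $\alpha \geq 0$ on $\overline\tau$ (and symmetrically with $\leq 0$).

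First I would record the two facts that drive everything. By Lemma \ref{lemhyperplan} applied to the pair $(\tau,\sigma)$: if an affine root $\alpha$ vanishes on $\overline\tau$ then $\alpha$ vanishes on all of $H(\tau,\sigma)$, in particular on $\omega$ and on $\overline\omega$. Applying the same lemma to the pair $(\omega,\sigma)$: if $\alpha$ vanishes on $\overline\omega$ then $\alpha$ vanishes on $H(\omega,\sigma)$. The second ingredient is the maximality of $\overline\tau$: any polysimplex $\eta$ with $\tau \leq \eta$ and $\alpha \geq 0$ on $\eta$ for every affine root with $\alpha \geq 0$ on $H(\tau,\sigma)$ — i.e. any $\eta \geq \tau$ lying in $H(\tau,\sigma)$ — satisfies $\eta \leq \overline\tau$; but what I actually want is the converse direction, namely that $\overline\tau$ is "as large as the constraints from $\tau$ allow", which is exactly the content of Lemma \ref{lemmaxi}.

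Now the core argument. Let $\alpha$ be an affine root with $\alpha \geq 0$ on $\tau$ and $\alpha \geq 0$ on $\overline\omega$; I must show $\alpha \geq 0$ on $\overline\tau$. Consider the restriction $\alpha_{|\overline\tau}$, which is constant since $\overline\tau$ is a polysimplex. If $\alpha_{|\overline\tau} > 0$ we are done. If $\alpha_{|\overline\tau} = 0$, then by Lemma \ref{lemhyperplan} (pair $(\tau,\sigma)$) $\alpha$ vanishes on $H(\tau,\sigma) \ni \omega$, so certainly $\alpha \geq 0$ on $\overline\tau$. The remaining case is $\alpha_{|\overline\tau} < 0$: I claim this cannot happen. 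Since $\tau \leq \overline\tau$ and $\alpha \geq 0$ on $\tau$ while $\alpha < 0$ on the interior of $\overline\tau$, the hyperplane $\ker\alpha$ must contain $\tau$, i.e. $\alpha_{|\tau} = 0$. Consider the half-apartment $D = \{\alpha \geq 0\}$; it contains $\tau$ and $\overline\omega$, hence contains $H(\tau,\overline\omega)$, and it is a polysimplicial convex set. Now I want to derive a contradiction with the maximality of $\overline\tau$ in $H(\tau,\sigma)$: the point is that the polysimplex $\overline\tau$ and the polysimplex $\omega$ are "on the same side" forced by $H(\tau,\sigma)$, and since $\omega \leq \overline\omega \subseteq D = \{\alpha \geq 0\}$, one reads off $\alpha_{|\omega} \geq 0$; combined with $\alpha_{|\tau} = 0$ and the convexity/closedness of $H(\tau,\sigma)$ (it is an intersection of half-apartments, each containing $\tau$ and $\sigma$), every polysimplex of $H(\tau,\sigma)$ incident to $\tau$ on the $\overline\tau$-side must also satisfy $\alpha \geq 0$ — but $\overline\tau$ is such a polysimplex with $\alpha < 0$, a contradiction. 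Making this last sentence precise is the main obstacle: one needs to argue that among the affine roots cutting out $H(\tau,\sigma)$, the value on $\overline\tau$ is determined, via Lemma \ref{lemmaxi}, by the requirement $\tau \leq \overline\tau$ plus membership in $H(\tau,\sigma)$, so that $\alpha < 0$ on $\overline\tau$ would force $\alpha < 0$ somewhere on $H(\tau,\sigma)$ forced from $\tau$ — which contradicts either $\alpha_{|\sigma}$ being of one sign or the presence of $\omega$. I would handle it by running the geodesic trick of Lemma \ref{lemhyperplan}: take a geodesic from an interior point of $\overline\tau$ to an interior point of $\omega$ inside $\mathcal{A}$ (both lie in $H(\tau,\sigma)$, which is convex, so the geodesic stays inside it); $\alpha$ is negative at the start and nonnegative at the end, so it crosses $\ker\alpha$; the first polysimplex $\eta$ met after the crossing lies in $H(\tau,\sigma)$, is strictly larger than or incident to $\overline\tau$ along $\ker\alpha$, and contains $\tau$ in its closure — contradicting the maximality of $\overline\tau$ exactly as in the proof of Lemma \ref{lemhyperplan}. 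The symmetric statement (with $\alpha \leq 0$) is identical, and together they give $\overline\tau \in H(\tau,\overline\omega)$.
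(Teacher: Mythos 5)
Your reduction to the affine-root characterisation of the enclosure (checking, for each affine root $\alpha$ with $\alpha\geq 0$ on $\tau\cup\overline\omega$, that $\alpha\geq 0$ on $\overline\tau$) is a legitimate alternative to the paper's argument, and the cases $\alpha_{|\overline\tau}>0$ and $\alpha_{|\overline\tau}=0$ are fine. The gap is in the case $\alpha_{|\overline\tau}<0$, and it is twofold. First, the geodesic step does not work: the first polysimplex met after the geodesic from $\overline\tau$ to $\omega$ crosses $\ker\alpha$ has no reason to contain $\tau$ in its closure, nor to be comparable with $\overline\tau$ (the crossing may occur far from $\overline\tau$), so no contradiction with the maximality of $\overline\tau$ can be extracted. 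Second, and more fundamentally, in this case you only ever use $\alpha_{|\omega}\geq 0$ (deduced from $\omega\leq\overline\omega$) together with data attached to the pair $(\tau,\sigma)$; the maximality of $\overline\omega$ in $H(\omega,\sigma)$, which you correctly list as an ingredient at the start, is never invoked. But the implication ``$\alpha_{|\tau}=0$, $\alpha_{|\omega}\geq 0$, $\omega\in H(\tau,\sigma)$ $\Rightarrow$ $\alpha_{|\overline\tau}\geq 0$'' is simply false: in an apartment of type $\tilde A_1\times\tilde A_1$ (a square grid) take $\tau=(0,0)$, $\sigma=(2,2)$, so $\overline\tau=[0,1]^2$, and take $\omega=(2,0)$ and $\alpha=-y$; then $\alpha_{|\tau}=\alpha_{|\omega}=0$ but $\alpha<0$ on the interior of $\overline\tau$. (Here $\alpha_{|\overline\omega}<0$ since $\overline\omega=\{2\}\times[0,1]$, so this $\alpha$ is not one you need to check --- which is exactly the information your argument discards.)

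The case can be closed correctly, and the maximality of $\overline\omega$ enters precisely through Lemma \ref{lemhyperplan}: from $\alpha_{|\tau}=0$ and $\alpha_{|\overline\tau}<0$ one gets $\alpha_{|\sigma}<0$ (otherwise the half-apartment $\{\alpha\geq0\}$ would contain $\tau\cup\sigma$, hence $H(\tau,\sigma)\ni\overline\tau$); then $\{\alpha\leq0\}$ contains $\tau\cup\sigma$, hence $H(\tau,\sigma)$, which contains $\overline\omega$ because $\overline\omega\in H(\omega,\sigma)\subseteq H(\tau,\sigma)$, so $\alpha_{|\overline\omega}\leq0$ and therefore $\alpha_{|\overline\omega}=0$; Lemma \ref{lemhyperplan} applied to the pair $(\omega,\sigma)$ then gives $\alpha_{|H(\omega,\sigma)}=0$, in particular $\alpha_{|\sigma}=0$, contradicting $\alpha_{|\sigma}<0$. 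For comparison, the paper avoids the root-by-root discussion altogether: it introduces $\overline\tau'$, the maximal polysimplex of $H(\tau,\overline\omega)$ containing $\tau$, notes $\overline\tau'\leq\overline\tau$, and, if the inequality were strict, picks a root vanishing on $\overline\tau'$ but positive on $\overline\tau$ and applies Lemma \ref{lemhyperplan} twice to force it to vanish on $\tau$, $\overline\omega$, $\sigma$, hence on all of $H(\tau,\sigma)\ni\overline\tau$.
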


\begin{proof}
Denote by $\overline{\tau}'$ the maximal polysimplex of $H(\tau,\overline{\omega})$ such that $\tau \leq \overline{\tau}'$. Then $H(\tau,\overline{\omega}) \subseteq H(\tau,\sigma)$ and so $\overline{\tau}' \leq \overline{\tau}$.

Let us assume that $\overline{\tau}' \neq \overline{\tau}$. Then $\overline{\tau}'$ is a face of $\overline{\tau}$ and if we take $\mathcal{A}$ an apartment containing $H(\tau,\sigma)$ there exists an affine root $a$ such that $a_{|\overline{\tau}'}=0$ and $a_{|\overline{\tau}}>0$. Since $a_{|\overline{\tau}'}=0$ by Lemma \ref{lemhyperplan} $a_{|H(\tau,\overline{\omega})}=0$ and in particular $a_{|\tau}=0$ and $a_{|\overline{\omega}}=0$. Again by Lemma \ref{lemhyperplan}, since $a_{|\overline{\omega}}=0$ we have $a_{|H(\omega,\sigma)}=0$ and so $a_{|\sigma}=0$. Thus $a_{|\tau}=0$ and $a_{|\sigma}=0$ so $a_{|H(\tau,\sigma)}=0$. But $\overline{\tau} \in H(\tau,\sigma)$, so $a_{|\overline{\tau}}=0$ which is absurd.
\end{proof}

\begin{Lem}
\label{lemfactorisationdebut}
Let $\tau,\sigma \in \bt$. Take $\overline{\tau}$ maximal in $H(\tau,\sigma)$ such that $\tau \leq \overline{\tau}$. Let $\tau_{0}=\tau, \cdots, \tau_{n}=\sigma$ an admissible path between $\tau$ and $\sigma$. Then $\tau, \overline{\tau}, \tau_{0}, \cdots, \tau_{n}$ is again an admissible path and
\[\varepsilon^{\tau_{n-1}}_{\tau_{n}} \circ \cdots \circ \varepsilon_{\tau_{1}}^{\tau_{0}}=\varepsilon^{\tau_{n-1}}_{\tau_{n}} \circ \cdots \circ \varepsilon_{\tau_{1}}^{\tau_{0}} \circ \varepsilon_{\tau_{0}}^{\overline{\tau}} \circ \varepsilon_{\overline{\tau}}^{\tau}.\]
\end{Lem}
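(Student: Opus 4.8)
The statement to prove is Lemma \ref{lemfactorisationdebut}: given $\tau \leq \overline{\tau}$ with $\overline{\tau}$ maximal in $H(\tau,\sigma)$, prepending $\tau,\overline{\tau}$ to an admissible path from $\tau$ to $\sigma$ yields again an admissible path, and the associated local map is unchanged because $\varepsilon_{\tau_0}^{\overline{\tau}}\circ\varepsilon_{\overline{\tau}}^{\tau}=\Id_{V_\tau}$.

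First I would check that $\tau,\overline{\tau},\tau_0,\dots,\tau_n$ is an admissible path. By Lemma \ref{lemCheminsTendus}(2) applied with the concatenation of the one-step path $\tau,\overline{\tau}$ (admissible since $\tau\leq\overline{\tau}$, and $\overline{\tau}\in H(\tau,\sigma)$ gives condition (2) for that segment) and the given path $\tau_0,\dots,\tau_n$, it suffices to verify that $\overline{\tau}\in H(\tau,\tau_n)$ — but $\tau_n=\sigma$ and $\overline{\tau}\in H(\tau,\sigma)$ by hypothesis, and we also need $\overline{\tau}\in H(\tau,\sigma)$ for the single vertex $\tau$ of the first segment, which is again the hypothesis. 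Wait — more carefully, Lemma \ref{lemCheminsTendus}(2) requires $\tau_k\in H(\tau_i,\tau_n)$ for all $i\leq k$; here the "$k$" is the position of $\overline{\tau}$ (index $1$), so I need $\overline{\tau}\in H(\tau,\sigma)$ (for $i=0$, the vertex $\tau$), which holds, and $\overline{\tau}\in H(\overline{\tau},\sigma)$ trivially. Actually the concatenation point is $\overline{\tau}$ but the path continues $\overline{\tau},\tau_0=\tau,\dots$; since $\tau\leq\overline{\tau}$ the segment $\overline{\tau},\tau$ is admissible only if $\tau\in H(\overline{\tau},\sigma)$, which follows from $\tau\leq\overline{\tau}$ and $\overline{\tau}\in H(\tau,\sigma)$ giving $H(\overline{\tau},\sigma)\subseteq H(\tau,\sigma)$ — hmm, that's the wrong containment. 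Rather, $\tau\leq\overline{\tau}$ gives $H(\tau,\sigma)\subseteq H(\overline{\tau},\sigma)$, so $\tau\in H(\tau,\sigma)\subseteq H(\overline{\tau},\sigma)$, fine. So admissibility is a direct bookkeeping exercise with Lemma \ref{lemCheminsTendus}(2) and the defining property of $\overline{\tau}$.

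Then I would prove the equality of local maps. Since $\varepsilon^{\tau_{n-1}}_{\tau_n}\circ\cdots\circ\varepsilon_{\tau_1}^{\tau_0}$ is, by definition, the composite along $\tau_0,\dots,\tau_n$, and the composite along $\tau,\overline{\tau},\tau_0,\dots,\tau_n$ is the same composite precomposed with $\varepsilon_{\tau_0}^{\overline{\tau}}\circ\varepsilon_{\overline{\tau}}^{\tau}$, it is enough to show $\varepsilon_{\tau_0}^{\overline{\tau}}\circ\varepsilon_{\overline{\tau}}^{\tau}=\Id_{V_\tau}$. Since $\tau\leq\overline{\tau}$, by the definitions $\varepsilon_{\overline{\tau}}^{\tau}=p_{\overline{\tau}}^{\tau}:V_\tau\twoheadrightarrow e_{\overline{\tau}}(V_\tau)=V_{\overline{\tau}}$ and $\varepsilon_{\tau}^{\overline{\tau}}=\varphi_{\tau}^{\overline{\tau}}:V_{\overline{\tau}}\hookrightarrow V_\tau$ — wait, the statement has $\varepsilon_{\tau_0}^{\overline{\tau}}$ with $\tau_0=\tau$, and since $\tau\leq\overline{\tau}$ this is the inclusion $\varphi^{\overline{\tau}}_{\tau}$... no: $\varepsilon^{\overline{\tau}}_{\tau}$ vs $\varepsilon_{\tau}^{\overline{\tau}}$. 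With the convention $\varepsilon_\sigma^\tau:V_\tau\to V_\sigma$, we have $\varepsilon_{\tau_0}^{\overline{\tau}}:V_{\overline{\tau}}\to V_{\tau_0}=V_\tau$, which is $\varphi^{\overline{\tau}}_{\tau}$ (inclusion, as $\tau\leq\overline{\tau}$), and $\varepsilon_{\overline{\tau}}^{\tau}:V_\tau\to V_{\overline{\tau}}$ is $p_{\overline{\tau}}^{\tau}$ (projection). Under the identification of $V_{\overline{\tau}}$ with $e_{\overline{\tau}}(V_\tau)\subseteq V_\tau$, the projection followed by the inclusion is exactly the identity on $V_\tau$ if and only if $e_{\overline{\tau}}$ acts as identity on $V_\tau$... which it does not in general. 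So the composite $\varphi^{\overline{\tau}}_\tau\circ p^{\tau}_{\overline{\tau}}$ is the idempotent $e_{\overline{\tau}}$ on $V_\tau$, not the identity. Hence the equality of local maps must instead follow because $\varepsilon^{\tau_1}_{\tau_0}$ (the first genuine step of the original path) already factors through $e_{\overline{\tau}}(V_\tau)$, i.e. $\varepsilon^{\tau_1}_{\tau_0}\circ e_{\overline{\tau}}=\varepsilon^{\tau_1}_{\tau_0}$. This is the crux: $\tau_1\in H(\tau,\sigma)$ and $\tau_1$ adjacent to $\tau$ forces $\tau_1\leq\overline{\tau}$ by Lemma \ref{lemmaxi} (if $\tau\leq\tau_1$) or a similar argument, so $e_{\tau_1}e_{\overline{\tau}}=e_{\tau_1}$ by consistency axiom (2) (with $z=\overline{\tau}\in H(\tau,\tau_1)$), whence the projection/inclusion onto $V_{\tau_1}$ absorbs $e_{\overline{\tau}}$.

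The main obstacle is precisely this last point: showing that inserting the detour through $\overline{\tau}$ does not alter the composite, which requires analyzing $\varepsilon^{\tau_1}_{\tau_0}$ according to whether $\tau\leq\tau_1$ or $\tau_1\leq\tau$. In the first case $\tau_1\leq\overline{\tau}$ (by maximality of $\overline{\tau}$ in $H(\tau,\sigma)$ among polysimplices $\geq\tau$, using Lemma \ref{lemmaxi}), so $e_{\tau_1}e_{\overline{\tau}}=e_{[\tau_1,\overline{\tau}]}=e_{\overline{\tau}}$... no, $=e_{\tau_1}$ since $\tau_1\leq\overline{\tau}$ — one checks $e_{\tau_1}e_{\overline{\tau}}=e_{\tau_1}$ directly as $e_{\tau_1}=e_{[\tau_1,\overline{\tau}]}e_{\tau_1}$... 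I would use consistency (2) with $z=\overline{\tau}$, $x=\tau_1$, $y$ an appropriate vertex. In the second case $\tau_1\leq\tau\leq\overline{\tau}$, and again $V_{\tau_1}=e_{\tau_1}V_\tau$ with $e_{\tau_1}e_{\overline{\tau}}=e_{\tau_1}$. Either way, $\varepsilon^{\tau_1}_{\tau_0}\circ\bigl(\varphi^{\overline{\tau}}_\tau\circ p^\tau_{\overline{\tau}}\bigr)=\varepsilon^{\tau_1}_{\tau_0}\circ e_{\overline{\tau}}=\varepsilon^{\tau_1}_{\tau_0}$, and the displayed identity follows. I expect the proof to be short modulo citing Lemma \ref{lemmaxi}, Lemma \ref{lemCheminsTendus}(2), and the two consistency axioms.
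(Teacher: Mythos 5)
The admissibility claim is fine (it matches the paper's one-line argument that $H(\tau,\sigma)=H(\overline{\tau},\sigma)$), and you correctly catch that $\varepsilon_{\tau_0}^{\overline{\tau}}\circ\varepsilon_{\overline{\tau}}^{\tau}$ is the idempotent $e_{\overline{\tau}}$ acting on $V_\tau$ rather than the identity. But the fix you then propose --- that the \emph{first step alone} absorbs it, i.e.\ $\varepsilon^{\tau_0}_{\tau_1}\circ e_{\overline{\tau}}=\varepsilon^{\tau_0}_{\tau_1}$ --- is false, and this is the crux of the lemma. In the case $\tau_0\leq\tau_1$ the map $\varepsilon^{\tau_0}_{\tau_1}$ is $v\mapsto e_{\tau_1}(v)$, so your identity reads $e_{\tau_1}e_{\overline{\tau}}=e_{\tau_1}$ on $V_\tau$; but since $\tau_1\leq\overline{\tau}$ (by Lemma \ref{lemmaxi}) one has $e_{\tau_1}e_{\overline{\tau}}=e_{[\tau_1,\overline{\tau}]}=e_{\overline{\tau}}$, which differs from $e_{\tau_1}$ whenever $\tau_1\neq\overline{\tau}$. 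Your own first computation gave the right answer before you second-guessed it; consistency axiom (2) is a three-term identity $e_xe_ze_y=e_xe_y$ and provides no two-term absorption. The case $\tau_1\leq\tau_0$ fails for the same reason: there the claim reduces to $e_{\overline{\tau}}=\mathrm{Id}$ on $V_\tau$.

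What is true, and what the paper proves, is that the \emph{entire remaining composite} absorbs $e_{\overline{\tau}}$. The paper reformulates the statement as: if $u\in V_\tau$ satisfies $e_{\overline{\tau}}(u)=0$ then $\varphi(u)=0$, where $\varphi$ is the full composite, and proves this by induction on $n$. Writing $\psi$ for the composite along $\tau_1,\dots,\tau_n$ and $\overline{\tau_1}$ for the maximal polysimplex of $H(\tau_1,\sigma)$ above $\tau_1$, the inductive hypothesis says $\psi$ kills the kernel of $e_{\overline{\tau_1}}$, so it suffices to check $e_{\overline{\tau_1}}(\varepsilon^{\tau_0}_{\tau_1}(u))=0$. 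When $\tau_0\leq\tau_1$ this follows from $\overline{\tau_1}=\overline{\tau}$ and commutation; when $\tau_1\leq\tau_0$ it requires the genuinely geometric Lemma \ref{lemenveloppemaximal} ($\overline{\tau}\in H(\tau,\overline{\tau_1})$) combined with the consistency relations of \cite[Prop.\ 2.2]{meyer_resolutions_2010} to write $e_{\overline{\tau_1}}e_{\tau}=e_{\overline{\tau_1}}e_{\tau}e_{\overline{\tau}}$. Your proposal uses neither the induction on the path length nor Lemma \ref{lemenveloppemaximal}, and without them the argument does not close.
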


\begin{proof}
We have that $\tau \leq \overline{\tau}$ and so $H(\tau,\sigma) \subseteq H(\overline{\tau},\sigma)$. Then since $\overline{\tau} \in H(\tau,\sigma)$ we deduce that $H(\overline{\tau},\sigma) \subseteq H(\tau,\sigma)$. Therefore $H(\tau,\sigma) = H(\overline{\tau},\sigma)$ and $\tau, \overline{\tau}, \tau_{0}, \cdots, \tau_{n}$ is an admissible path.

Let us show by induction on $n$ that for every polysimplices $\sigma$ and $\tau$ and for every admissible path $\tau_{0}, \cdots, \tau_{n}$ between $\tau$ and $\sigma$ that $\varepsilon^{\tau_{n-1}}_{\tau_{n}} \circ \cdots \circ \varepsilon_{\tau_{1}}^{\tau_{0}}=\varepsilon^{\tau_{n-1}}_{\tau_{n}} \circ \cdots \circ \varepsilon_{\tau_{1}}^{\tau_{0}} \circ \varepsilon_{\tau_{0}}^{\overline{\tau}} \circ \varepsilon_{\overline{\tau}}^{\tau}$. The case $n=1$ is obvious. So let us assume the result true for $n-1$ and let us prove it for $n$.

Let $\varphi = \varepsilon^{\tau_{n-1}}_{\tau_{n}} \circ \cdots \circ \varepsilon_{\tau_{1}}^{\tau_{0}}$. The map $\varepsilon^{\tau_{n-1}}_{\tau_{n}} \circ \cdots \circ \varepsilon_{\tau_{1}}^{\tau_{0}} \circ \varepsilon_{\tau_{0}}^{\overline{\tau}} \circ \varepsilon_{\overline{\tau}}^{\tau}$ corresponds to the map
\[V_\tau \overset{p^{\tau}_{\overline{\tau}}}{\twoheadrightarrow} V_{\overline{\tau}}  \overset{\varphi^{\overline{\tau}}_{\tau}}{\hookrightarrow} V_\tau  \overset{\varphi}{\to} V_{\sigma}.\]
Let $v \in V_\tau$, we wish to show that $\varphi(v) = \varphi(e_{\overline{\tau}}(v))$. The result we wish to demonstrate is therefore equivalent to saying that if $u \in V_{\tau}$ is such that $e_{\overline{\tau}}(u)=0$ then $\varphi(u)=0$.

Let $\psi=\varepsilon^{\tau_{n-1}}_{\tau_{n}} \circ \cdots \circ \varepsilon_{\tau_{2}}^{\tau_{1}}$. Denote by $\overline{\tau_{1}}$ the maximal polysimplex of $H(\tau_{1},\sigma)$ such that $\tau_{1} \leq \overline{\tau_{1}}$. Then by the induction hypothesis, we know that if $v \in V_{\tau_{1}}$ is such that $e_{\overline{\tau_{1}}}(v)=0$, $\psi(v)=0$. Let $u \in V_{\tau}$ such that $e_{\overline{\tau}}(u)=0$, since $\varphi = \psi \circ \varepsilon_{\tau_{1}}^{\tau_{0}}$, all we have to do is show that $e_{\overline{\tau_{1}}}(\varepsilon_{\tau_{1}}^{\tau_{0}}(u))=0$. There are two cases:
\begin{itemize}

\item $\tau_{0} \leq \tau_{1}$

Then $\tau_{1} \in H(\tau_{0},\sigma)$ and $\tau_{0} \in H(\tau_{1},\sigma)$ so $H(\tau_{0},\sigma)=H(\tau_{1},\sigma)$. Therefore $\overline{\tau_{1}}=\overline{\tau}$. Thus $e_{\overline{\tau_{1}}}(\varepsilon_{\tau_{1}}^{\tau_{0}}(u))=e_{\overline{\tau}}(\varepsilon_{\tau_{1}}^{\tau_{0}}(u))=\varepsilon_{\tau_{1}}^{\tau_{0}}(e_{\overline{\tau}}u)=0$.

\item $\tau_{0} \geq \tau_{1}$

Then $\varepsilon_{\tau_{1}}^{\tau_{0}}=\varphi_{\tau_{1}}^{\tau_{0}}:V_{\tau_{0}} \hookrightarrow V_{\tau_{1}}$ and $e_{\overline{\tau_{1}}}(\varepsilon_{\tau_{1}}^{\tau_{0}}(u))=e_{\overline{\tau_{1}}}e_{\tau}(\varepsilon_{\tau_{1}}^{\tau_{0}}(u))$. But $\overline{\tau} \in H(\tau,\overline{\tau_{1}})$ by Lemme \ref{lemenveloppemaximal}, so Proposition 2.2 (e) of \cite{meyer_resolutions_2010} tells us that $e_{\overline{\tau_{1}}}e_{\tau}=e_{\overline{\tau_{1}}}e_{\overline{\tau}}e_{\tau}=e_{\overline{\tau_{1}}}e_{\tau}e_{\overline{\tau}}$ and therefore $e_{\overline{\tau_{1}}}(\varepsilon_{\tau_{1}}^{\tau_{0}}(u))=e_{\overline{\tau_{1}}}e_{\tau}e_{\overline{\tau}}(\varepsilon_{\tau_{1}}^{\tau_{0}}(u))=e_{\overline{\tau_{1}}}e_{\tau}(\varepsilon_{\tau_{1}}^{\tau_{0}}(e_{\overline{\tau}}u))=0$.
\end{itemize}
\end{proof}

Let $\mathcal{P}(n)$ be the property that for every $\tau,\sigma \in \bt$ such that $\rho(\tau,\sigma) \leq n$ then $\varepsilon_{\sigma}^{\tau}$ does not depend on the choice of the admissible path.

\begin{Lem}
\label{lemrecurrencechemin}
Let $\tau,\sigma \in \bt$ such that $\rho(\tau,\sigma)>1$. Let $\tau_{0},\cdots,\tau_{n}$ and $\tau_{0}',\cdots,\tau_{m}'$ be two admissible paths between $\tau$ and $\sigma$. We assume that
\begin{itemize}
\item $\mathcal{P}(\rho(\tau,\sigma)-1)$ is true
\item $\rho(\tau_{1},\sigma)<\rho(\tau,\sigma)$ and $\rho(\tau_{1}',\sigma)<\rho(\tau,\sigma)$
\item There exists $\omega \in H(\tau_{1},\sigma) \cap H(\tau_{1}',\sigma)$
\item $\rho(\tau,\omega)<\rho(\tau,\sigma)$
\item $\tau_{1} \in H(\tau,\omega)$ and  $\tau_{1}' \in H(\tau,\omega)$
\end{itemize}
Then
\[\varepsilon^{\tau_{n-1}}_{\tau_{n}} \circ \cdots \circ \varepsilon_{\tau_{1}}^{\tau_{0}}=\varepsilon^{\tau_{m-1}'}_{\tau_{m}'} \circ \cdots \circ \varepsilon_{\tau_{1}'}^{\tau_{0}'}.\]
\end{Lem}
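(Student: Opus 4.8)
The goal is to show that two admissible paths between $\tau$ and $\sigma$ give the same local map, under the listed hypotheses, so that we can run the induction on $\rho(\tau,\sigma)$ that establishes $\mathcal{P}(n)$ for all $n$. The strategy is to reduce both paths to a common normal form using the simplification lemmas, then to peel off the first vertex and invoke the inductive hypothesis on a strictly smaller polysimplicial hull.

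\textbf{Step 1: pass through $\omega$.} By hypothesis $\omega \in H(\tau_1,\sigma)$ and $\tau_1 \in H(\tau,\omega)$; applying Lemma \ref{lemCheminOmega} (more precisely its second assertion, together with Lemma \ref{lemCheminsTendus} part 2) I would replace the path $\tau_0,\dots,\tau_n$ by an admissible path of the form $\tau=\tau_0,\dots,\tau_1,\dots,\omega,\dots,\sigma$ in which the segment from $\tau_0$ to $\omega$ is itself an admissible path passing through $\tau_1$; and likewise for $\tau_0',\dots,\tau_m'$, using $\omega \in H(\tau_1',\sigma)$ and $\tau_1' \in H(\tau,\omega)$. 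This does not change the composite $\varepsilon$ because the new segments are themselves admissible paths realising the same local maps $\varepsilon_\omega^{\tau_1}$ and $\varepsilon_\sigma^\omega$ up to the inductive hypothesis.

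\textbf{Step 2: split at $\omega$ and apply $\mathcal{P}(\rho(\tau,\sigma)-1)$.} After Step 1 each path factors as a composite $\varepsilon_\sigma^\omega \circ (\text{path from }\tau\text{ to }\omega)$, where the first factor runs over an admissible path inside $H(\omega,\sigma)$. Since $H(\tau,\omega) \subseteq H(\tau,\sigma)$ and $\rho(\tau,\omega)<\rho(\tau,\sigma)$, the hypothesis $\mathcal{P}(\rho(\tau,\sigma)-1)$ applies to the $\tau$-to-$\omega$ part: the two paths from $\tau$ to $\omega$ (each passing through $\tau_1$, resp. $\tau_1'$) induce the same map $\varepsilon_\omega^\tau$. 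For the second factor, $\omega \in H(\tau_1,\sigma)$ gives $H(\omega,\sigma) \subseteq H(\tau_1,\sigma) \subsetneq H(\tau,\sigma)$ (using $\rho(\tau_1,\sigma)<\rho(\tau,\sigma)$), so $\rho(\omega,\sigma) \le \rho(\tau,\sigma)-1$ and $\mathcal{P}(\rho(\tau,\sigma)-1)$ again applies to the $\omega$-to-$\sigma$ part, showing the two choices there agree as well. Composing, both original composites equal $\varepsilon_\sigma^\omega \circ \varepsilon_\omega^\tau$, hence are equal to each other.

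\textbf{Main obstacle.} The delicate point is Step 1: I must be sure that inserting intermediate polysimplices to make the $\tau$-to-$\omega$ portion an honest admissible path (and to make it pass through the given $\tau_1$) does not alter the local map. This is where Lemmas \ref{lemadjacent}, \ref{lemsimpliepsilon} and \ref{lemsimplificationchemin} do the real work — they guarantee that reshuffling an admissible path by the allowed moves (inserting joins $[\tau_i,\dots,\tau_j]$ of adjacent stretches, collapsing them back) leaves the composite $\varepsilon$ unchanged, so that "the map realised by an admissible path from $\tau$ to $\omega$" is well-defined for the purpose of Step 2 even before one knows full path-independence. One must also check the bookkeeping that all the intermediate polysimplices produced still lie in the appropriate hulls (so that the $\varepsilon$'s compose and the hulls strictly shrink), which is exactly the content of Lemma \ref{lemenveloppemaximal} and Lemma \ref{lemCheminsTendus} part 2 applied to the nested hulls $H(\omega,\sigma) \subseteq H(\tau_1,\sigma) \subseteq H(\tau,\sigma)$.
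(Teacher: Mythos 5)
Your argument is correct and is essentially the paper's own proof: peel off $\varepsilon_{\tau_1}^{\tau_0}$, use $\mathcal{P}(\rho(\tau,\sigma)-1)$ on the smaller hull $H(\tau_1,\sigma)$ to reroute the tail through $\omega$, recombine $\tau_0,\tau_1,\dots,\omega$ into $\varepsilon_\omega^\tau$ via $\mathcal{P}$ on $H(\tau,\omega)$, and conclude that both composites equal $\varepsilon_\sigma^\omega\circ\varepsilon_\omega^\tau$ by symmetry. One minor correction of emphasis: within this lemma the rerouting is licensed entirely by the inductive hypothesis applied to the strictly smaller hulls $H(\tau_1,\sigma)$, $H(\tau,\omega)$ and $H(\omega,\sigma)$, not by Lemmas \ref{lemadjacent}, \ref{lemsimpliepsilon} and \ref{lemsimplificationchemin}, which the paper only needs elsewhere (in Proposition \ref{proIndependanceChemin}) to arrange that the hypotheses of the present lemma hold.
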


\begin{proof}
$\varepsilon^{\tau_{n-1}}_{\tau_{n}} \circ \cdots \circ \varepsilon_{\tau_{1}}^{\tau_{0}}=(\varepsilon^{\tau_{n-1}}_{\tau_{n}} \circ \cdots \circ \varepsilon_{\tau_{2}}^{\tau_{1}})\circ \varepsilon_{\tau_{1}}^{\tau_{0}}$. Since $\rho(\tau_{1},\sigma)<\rho(\tau,\sigma)$ and that $\mathcal{P}(\rho(\tau,\sigma)-1)$ is true, $\varepsilon^{\tau_{n-1}}_{\tau_{n}} \circ \cdots \circ \varepsilon_{\tau_{2}}^{\tau_{1}}=\varepsilon^{\tau_{1}}_{\sigma}$ does not depend on the admissible path. Moreover $\omega \in H(\tau_{1},\sigma)$ hence by Lemma \ref{lemCheminOmega} there exists an admissible path between $\tau_{1}$ and $\sigma$ going through $\omega$ such that the paths from $\tau_{1}$ to $\omega$ and from $\omega$ to $\sigma$ are admissible. Hence $\varepsilon^{\tau_{1}}_{\sigma}=\varepsilon^{\omega}_{\sigma} \circ \varepsilon^{\tau_{1}}_{\omega}$. Thus we have $\varepsilon^{\tau_{n-1}}_{\tau_{n}} \circ \cdots \circ \varepsilon_{\tau_{1}}^{\tau_{0}}=\varepsilon^{\omega}_{\sigma} \circ \varepsilon^{\tau_{1}}_{\omega} \circ \varepsilon_{\tau_{1}}^{\tau_{0}}$. Since $\rho(\tau,\omega)<\rho(\tau,\sigma)$ and that $\mathcal{P}(\rho(\tau,\sigma)-1)$ is true, $\varepsilon^{\tau}_{\omega}$ does not depend on the admissible path. In particular, since $\tau_{1} \in H(\tau,\omega)$, we can complete the path $\tau_{0},\tau_{1}$ in an admissible path between $\tau$ and $\omega$. The path from $\tau_{1}$ to $\omega$ is then also admissible and the same goes for the path $\tau_{0},\tau_{1}$. Thus $\varepsilon^{\tau_{1}}_{\omega} \circ \varepsilon_{\tau_{1}}^{\tau_{0}}=\varepsilon_{\omega}^{\tau_{0}}$. So we have just shown that
\[\varepsilon^{\tau_{n-1}}_{\tau_{n}} \circ \cdots \circ \varepsilon_{\tau_{1}}^{\tau_{0}}=\varepsilon_{\sigma}^{\omega} \circ \varepsilon_{\omega}^{\tau}.\]

But the assumptions about the paths $\tau_{0},\cdots,\tau_{n}$ and $\tau_{0}',\cdots,\tau_{m}'$ are symmetric, so we also have $\varepsilon^{\tau_{m-1}'}_{\tau_{m}'} \circ \cdots \circ \varepsilon_{\tau_{1}'}^{\tau_{0}'}=\varepsilon_{\sigma}^{\omega} \circ \varepsilon_{\omega}^{\tau}$. And finally
\[\varepsilon^{\tau_{n-1}}_{\tau_{n}} \circ \cdots \circ \varepsilon_{\tau_{1}}^{\tau_{0}}=\varepsilon^{\tau_{m-1}'}_{\tau_{m}'} \circ \cdots \circ \varepsilon_{\tau_{1}'}^{\tau_{0}'}.\]
\end{proof}

\begin{Lem}
\label{lemseparationsimplexe}
Let $\tau,\sigma \in \bt$. Take $\mathcal{A}$ an apartment containing $H(\sigma,\tau)$. We assume that there exists an affine root $a$ such that $a_{|\tau}>0$ and $a_{|\sigma}<0$. Then if $\omega \in H(\tau,\sigma)$ is a polysimplex containing an interior point $x$ with $a(x)=0$, we get $\rho(\omega,\sigma) < \rho(\tau,\sigma)$ and $\rho(\tau,\omega)<\rho(\tau,\sigma)$.
\end{Lem}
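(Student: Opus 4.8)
The plan is to reduce the statement to a comparison of finite subcomplexes of the apartment $\mathcal{A}$. Since $\rho(\tau',\sigma')$ counts the polysimplices of $H(\tau',\sigma')$ and each hull is a finite subcomplex of $\mathcal{A}$ (an intersection of apartments, hence of subcomplexes), both inequalities follow once one establishes the strict inclusions of subcomplexes $H(\omega,\sigma)\subsetneq H(\tau,\sigma)$ and $H(\tau,\omega)\subsetneq H(\tau,\sigma)$, because a proper subcomplex of a finite complex has strictly fewer cells. The non-strict inclusions are immediate: as $\omega\in H(\tau,\sigma)$, every apartment containing $\tau\cup\sigma$ contains $\omega$, hence contains $\omega\cup\sigma$ and $\tau\cup\omega$, and intersecting over all such apartments gives $H(\omega,\sigma)\subseteq H(\tau,\sigma)$ and $H(\tau,\omega)\subseteq H(\tau,\sigma)$.

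Next I would invoke two standard facts. First, because $\omega$ contains an interior point $x$ with $a(x)=0$, the whole polysimplex $\omega$ lies on the wall $\ker(a)$, i.e. $a_{|\omega}=0$; this is the usual property of the affine-root hyperplane arrangement of $\mathcal{A}$ --- each polysimplex is either contained in a given wall or disjoint from its relative interior --- and it passes to the polysimplicial setting coordinatewise. Second, inside $\mathcal{A}$ the hull $H(\tau',\sigma')$ coincides with the enclosure of $\tau'\cup\sigma'$, namely the intersection of all half-apartments $\{b\geq 0\}$, with $b$ an affine root of $\mathcal{A}$, that contain $\tau'\cup\sigma'$ (see \cite{BT}; this is also the description used throughout \cite[\S 2]{meyer_resolutions_2010}).

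With these in hand the conclusion is short. The affine root $-a$ satisfies $(-a)_{|\omega}=0$ and $(-a)_{|\sigma}>0$, so the half-apartment $\{-a\geq 0\}$ is among those defining the enclosure $H(\omega,\sigma)$, whence $H(\omega,\sigma)\subseteq\{a\leq 0\}$. But $\tau$ is a polysimplex of $H(\tau,\sigma)$ on which $a$ is positive, so $\tau\not\subseteq H(\omega,\sigma)$; thus the inclusion $H(\omega,\sigma)\subseteq H(\tau,\sigma)$ is proper and $\rho(\omega,\sigma)<\rho(\tau,\sigma)$. Symmetrically, $a$ is $\geq 0$ on both $\tau$ and $\omega$, so $H(\tau,\omega)\subseteq\{a\geq 0\}$, while $\sigma\subseteq\{a<0\}$ is a polysimplex of $H(\tau,\sigma)$; hence $\sigma\not\subseteq H(\tau,\omega)$ and $\rho(\tau,\omega)<\rho(\tau,\sigma)$.

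The only genuinely non-formal input is the enclosure description of the hull, together with the remark that an interior point of $\omega$ lying on $\ker(a)$ forces all of $\omega$ onto that wall; I expect these to be the only places requiring care, the remainder being bookkeeping with subcomplexes. I do not anticipate a serious obstacle.
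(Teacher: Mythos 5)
Your proof is correct and follows essentially the same route as the paper's: both reduce to showing the strict inclusions of hulls, both deduce $a_{|\omega}=0$ from the interior point lying on $\ker(a)$, and both conclude that $a$ separates $\tau$ (resp. $\sigma$) from the hull $H(\omega,\sigma)$ (resp. $H(\tau,\omega)$), which is exactly the enclosure description you invoke. The only cosmetic difference is that the paper dispatches the second inequality by the symmetry of the hypotheses in $\tau$ and $\sigma$ rather than writing it out.
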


\begin{proof}
Since the situation is symmetrical in $\tau$ and $\sigma$, all we have to do is show that $\rho(\omega,\sigma) < \rho(\tau,\sigma)$. Since $\omega \in H(\tau,\sigma)$ we have $H(\omega,\sigma) \subseteq H(\tau,\sigma)$ and $\rho(\omega,\sigma) \leq \rho(\tau,\sigma)$. Hence we need to prove that $H(\omega,\sigma) \varsubsetneq H(\tau,\sigma)$.

Since $a(x)=0$ and $x$ is an interior point of $\omega$ we have $a_{|\omega}=0$. Hence $a_{|\omega} \leq 0$, $a_{|\sigma} < 0$ and $a_{|\tau} > 0$ so $a$ separates $\tau$ from $\omega$ and $\sigma$ and therefore $\tau \notin H(\omega,\sigma)$ which ends the proof.
\end{proof}

\begin{Pro}
\label{proIndependanceChemin}
Let $\tau,\sigma \in \bt$. Then $\varepsilon_{\sigma}^{\tau}$ does not depend on the choice of the admissible path.
\end{Pro}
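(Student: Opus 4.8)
The plan is to prove $\mathcal{P}(n)$ by strong induction on $n = \rho(\tau,\sigma)$. The base case $n = 1$ is trivial, since then $H(\tau,\sigma)$ consists of a single polysimplex and the only admissible path is $\tau_0 = \tau = \sigma$. For the induction step, fix $\tau,\sigma$ with $\rho(\tau,\sigma) > 1$, assume $\mathcal{P}(\rho(\tau,\sigma)-1)$, and let $\tau_0,\dots,\tau_n$ and $\tau_0',\dots,\tau_m'$ be two admissible paths from $\tau$ to $\sigma$. We want to reduce to the situation of Lemma~\ref{lemrecurrencechemin}, i.e. to produce a common polysimplex $\omega$ with $\rho(\tau,\omega) < \rho(\tau,\sigma)$ through which both paths can be routed after a controlled modification.

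The key reduction is to first bring both paths into a normal form via Lemmas~\ref{lemsimplificationchemin} and~\ref{lemfactorisationdebut}. Using Lemma~\ref{lemfactorisationdebut}, I prepend $\tau, \overline{\tau}$ to each path (where $\overline{\tau}$ is the unique maximal polysimplex of $H(\tau,\sigma)$ above $\tau$, from Lemma~\ref{lemmaxi}); this does not change the composite map $\varepsilon_\sigma^\tau$, and it lets me assume the second polysimplex of each path is $\overline{\tau}$, hence that I am effectively computing along paths starting at the maximal polysimplex $\overline{\tau}$. Then Lemma~\ref{lemsimplificationchemin} (with $\omega = \sigma$, which legitimately satisfies $\tau_n = \sigma \in H(\tau_i,\sigma)$ since the $\tau_i$ lie in $H(\tau,\sigma)$) replaces each path by one where, whenever a descending step $\tau_i' > \tau_{i+1}'$ occurs in the interior, $\rho(\tau_{i+1}',\sigma)$ strictly drops. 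Consequently, after these reductions, the second polysimplex $\tau_1$ (resp.\ $\tau_1'$) of each path either is reached from $\tau$ by an ascending step to $\overline\tau$ — handled directly — or satisfies $\rho(\tau_1,\sigma) < \rho(\tau,\sigma)$ (resp.\ for $\tau_1'$).

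It remains to find the common $\omega \in H(\tau_1,\sigma) \cap H(\tau_1',\sigma)$ with $\rho(\tau,\omega) < \rho(\tau,\sigma)$ and $\tau_1,\tau_1' \in H(\tau,\omega)$, so that Lemma~\ref{lemrecurrencechemin} applies. Work in an apartment $\mathcal{A}$ containing $H(\tau,\sigma)$. Because $\rho(\tau_1,\sigma) < \rho(\tau,\sigma)$, the polysimplex $\tau_1$ must be separated from $\tau$ inside $H(\tau,\sigma)$ by some affine root; combining the affine roots that witness $\rho(\tau_1,\sigma) < \rho(\tau,\sigma)$ and $\rho(\tau_1',\sigma) < \rho(\tau,\sigma)$, one locates a polysimplex $\omega \in H(\tau,\sigma)$ lying on the relevant walls between $\tau$ and both $\tau_1,\tau_1'$; Lemma~\ref{lemseparationsimplexe} then gives $\rho(\tau,\omega) < \rho(\tau,\sigma)$ and $\rho(\omega,\sigma) < \rho(\tau,\sigma)$, while the containments $\omega \in H(\tau_1,\sigma) \cap H(\tau_1',\sigma)$ and $\tau_1,\tau_1' \in H(\tau,\omega)$ follow from the geometry of half-spaces in $\mathcal{A}$ together with Lemma~\ref{lemCheminOmega}. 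Feeding this $\omega$ into Lemma~\ref{lemrecurrencechemin} yields $\varepsilon_\sigma^\tau = \varepsilon_\sigma^\omega \circ \varepsilon_\omega^\tau$ for both paths, completing the induction.

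I expect the main obstacle to be the last paragraph: carefully choosing $\omega$ so that it simultaneously separates $\tau$ from $\tau_1$ and from $\tau_1'$, lies in both $H(\tau_1,\sigma)$ and $H(\tau_1',\sigma)$, and has all the $\rho$-inequalities — this is where the combinatorics of the affine roots and the polysimplicial hull must be handled with care, and where the extra flexibility of admissible paths (over Wang's taut paths) is genuinely used. The earlier path-normalization steps, by contrast, are essentially bookkeeping on top of Lemmas~\ref{lemadjacent}--\ref{lemfactorisationdebut}.
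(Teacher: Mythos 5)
Your induction set-up and the two normalization steps (prepending $\overline{\tau}$ via Lemma~\ref{lemfactorisationdebut}, then applying Lemma~\ref{lemsimplificationchemin} with $\omega=\sigma$ so that $\rho(\tau_1,\sigma)<\rho(\tau,\sigma)$ and $\rho(\tau_1',\sigma)<\rho(\tau,\sigma)$) match the paper's proof. But the final step, which you yourself flag as the main obstacle, contains a genuine gap: you posit a \emph{single} polysimplex $\omega\in H(\tau_1,\sigma)\cap H(\tau_1',\sigma)$ with $\rho(\tau,\omega)<\rho(\tau,\sigma)$ and $\tau_1,\tau_1'\in H(\tau,\omega)$, to be fed into one application of Lemma~\ref{lemrecurrencechemin}. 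Nothing in your argument produces such an $\omega$: both $H(\tau_1,\sigma)$ and $H(\tau_1',\sigma)$ are proper subcomplexes of $H(\tau,\sigma)$, their intersection is only guaranteed to contain $\sigma$ (for which $\rho(\tau,\sigma)<\rho(\tau,\sigma)$ fails), and "combining the affine roots" does not by itself locate a polysimplex lying simultaneously in both hulls, on a separating wall, and close enough to $\tau$ that $\tau_1,\tau_1'\in H(\tau,\omega)$. In general no single such $\omega$ need exist, and the whole difficulty of the proposition is concentrated precisely here.

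The paper's resolution is to give up on a single bridge and build a chain instead. Since $\tau$ and $\sigma$ may be assumed non-adjacent (the adjacent case is Corollary~\ref{coradjacent}), one picks an affine root $a$ with $a_{|\tau}>0$, $a_{|\sigma}<0$; maximality of $\tau$ forces $\tau_1<\tau$, hence $a_{|\tau_1}\geq 0$, which yields a point $x\in H(\tau_1,\sigma)$ with $a(x)=0$, and likewise $y\in H(\tau_1',\sigma)$. A geodesic from $x$ to $y$ stays in $H(\tau,\sigma)$ and on the wall $a=0$, and traverses a sequence of polysimplices $\omega_0,\dots,\omega_{2k}$ with $\omega_i\leq\omega_{i+1}$ or $\omega_{i+1}\leq\omega_i$; Lemma~\ref{lemseparationsimplexe} applies to each $\omega_i$ because it has an interior point on the wall. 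One then builds, for each $i$, an auxiliary admissible path from $\tau$ to $\sigma$ through $\omega_i$ (via Lemmas~\ref{lemCheminOmega} and~\ref{lemsimplificationchemin}) and applies Lemma~\ref{lemrecurrencechemin} $2k+2$ times: once between each pair of consecutive auxiliary paths (taking $\omega$ to be the smaller of $\omega_i,\omega_{i+1}$), and once at each end to compare with your two given paths (taking $\omega=\omega_0$, resp.\ $\omega=\omega_{2k}$). Each individual application only needs a local common $\omega$, which is exactly what the geodesic provides. Without this chain construction your induction step does not close.
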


\begin{proof}
We prove the result by induction on $\rho(\tau,\sigma)$. The result is true if $\rho(\tau,\sigma)=1$. Then we assume that $\rho(\tau,\sigma)>1$ and that $\mathcal{P}(\rho(\tau,\sigma)-1)$ is true.

\bigskip

Let us then take two admissible paths $\tau_{0},\cdots,\tau_{n}$ and $\tau_{0}',\cdots,\tau_{m}'$. Lemma \ref{lemfactorisationdebut} allows us to assume that $\tau = \overline{\tau}$ where $\overline{\tau}$ is the maximal polysimplex of $H(\tau,\sigma)$ such that $\tau \leq \overline{\tau}$. By applying Lemma \ref{lemsimplificationchemin} (with $\omega=\sigma$) we can assume that $\tau_{0},\cdots,\tau_{n}$ and $\tau_{0}',\cdots,\tau_{m}'$ verify the conditions of the latter. Moreover, by removing the first simplices if they are equal, we can assume that $\tau_{0} \neq \tau_{1}$ and $\tau_{0}' \neq \tau_{1}'$. In particular we have that $\rho(\tau_{1},\sigma)<\rho(\tau,\sigma)$ and $\rho(\tau_{1}',\sigma)<\rho(\tau,\sigma)$. The case where $\tau$ and $\sigma$ are adjacent is settled by Corollary \ref{coradjacent}, so we assume that $\tau$ and $\sigma$ are not adjacent.

\bigskip

Let $\mathcal{A}$ be an apartment containing $H(\tau,\sigma)$. Since $\tau$ and $\sigma$ are two polysimplices non adjacent, there exists an affine root $a$ such that $a_{|\tau}>0$ and $a_{|\sigma}<0$. Since $\tau$ is maximal, $\tau_{1}<\tau$ and so $a_{|\tau_{1}}\geq 0$. We deduce from $a_{|\tau_{1}}\geq 0$ and $a_{|\sigma}< 0$ that there exists $x \in H(\tau_{1},\sigma)$ such that $a(x)=0$. Likewise, there exists $y \in H(\tau_{1}',\sigma)$ such that $a(y)=0$. Let $\varphi:[0,1] \rightarrow \bt$ be a geodesic between $x$ and $y$. Since $x,y \in H(\tau,\sigma)$, $\varphi([0,1]) \subseteq H(\tau,\sigma)$. Each $\varphi(t)$ is an interior point of a polysimplex $\omega(t)$. The map $t \mapsto \omega(t)$ is piecewise constant, so take $0=t_{0}<t_{2}<\cdots<t_{2k-2}<t_{2k}=1$ the instants where there is a jump of the map $\omega(t)$. Choose also $t_{1},\cdots,t_{2k-1}$ such that $t_{0} < t_{1} < t_{2} < \cdots < t_{2k-1} < t_{2k}$. Let $\omega_{i}=\omega(t_{i})$. Then $\omega_{2i}$ and $\omega_{2i+2}$ must be faces of $\omega_{2i+1}$ and so for all $i$ we get either $\omega_{i} \leq \omega_{i+1}$ or $\omega_{i+1} \leq \omega_{i}$. Now $a(x)=a(y)=0$ so for all $t$, $a(\varphi(t))=0$. Hence $\omega_{i}$ contains an interior point $\varphi(t_{i})$ such that $a(\varphi(t_{i}))=0$. Lemma \ref{lemseparationsimplexe} ensures that $\rho(\omega_{i},\sigma) < \rho(\tau,\sigma)$ and $\rho(\tau,\omega_{i})<\rho(\tau,\sigma)$. To summarize, we have just built a sequence of polysimplices $\omega_{0},\cdots,\omega_{2k}$ such that for all $i$, $\omega_{i} \in H(\tau,\sigma)$, $\rho(\omega_{i},\sigma) < \rho(\tau,\sigma)$, $\rho(\tau,\omega_{i})<\rho(\tau,\sigma)$, $\omega_{i} \leq \omega_{i+1}$ or $\omega_{i+1} \leq \omega_{i}$, $\omega_{0} \in H(\tau_{1},\sigma)$ and $\omega_{2k} \in H(\tau_{1}',\sigma)$.

\bigskip

Let $i \in \{0,1,\cdots,2k\}$. Since $\omega_{i} \in H(\tau,\sigma)$ take, thanks to Lemma \ref{lemCheminOmega}, an admissible path $\tau_{0}^{i}=\tau,\cdots,\tau_{l_{i}}^{i}=\omega_{i}$ from $\tau$ to $\omega_{i}$ such that for all $j$, $\omega_{i} \in H(\tau_{j}^{i},\sigma)$. With Lemma \ref{lemsimplificationchemin} we can assume that the $\tau_{j}^{i}$ satisfy the conditions of the latter. We complete the path $\tau_{0}^{i},\cdots,\tau_{l_{i}}^{i}$ in an admissible path $\tau_{0}^{i},\cdots,\tau_{n_{i}}^{i}$ between $\tau$ and $\sigma$ thanks to Lemma \ref{lemCheminsTendus} 2. We also assume that $\tau_{0}^{i} \neq \tau_{1}^{i}$ by removing the first polysimplices if they are equal. Since $\tau_{0}^{i}=\tau$ is maximal, $\tau_{0}^{i} > \tau_{1}^{i}$. If $\tau_{1}^{i} \neq \omega_{i}$ then condition 3 of Lemma \ref{lemsimplificationchemin} tells us that $\rho(\tau_{1}^{i},\sigma)<\rho(\tau,\sigma)$. And if $\tau_{1}^{i} = \omega_{i}$ then by construction of $\omega_{i}$, $\rho(\tau_{1}^{i},\sigma)=\rho(\omega_{i},\sigma)<\rho(\tau,\sigma)$. We have just constructed an admissible path $\tau_{0}^{i},\cdots,\tau_{n_{i}}^{i}$ between $\tau$ and $\sigma$ such that $\rho(\tau_{1}^{i},\sigma) < \rho(\tau,\sigma)$, $\omega_{i} \in H(\tau_{1}^{i},\sigma)$ et $\tau_{1}^{i} < \tau$. Denote by
\[\varepsilon_i:=\varepsilon^{\tau_{n_{i}-1}^{i}}_{\tau_{n_{i}}^{i}} \circ \cdots \circ \varepsilon_{\tau_{1}^{i}}^{\tau_{0}^{i}}\]
the local map associated to this path.

\bigskip

Let $i \in \{0,1,\cdots,2k-1\}$. We have $\omega_{i} \leq \omega_{i+1}$ or $\omega_{i+1} \leq \omega_{i}$. Denote by $\omega$ the smallest of the two polysimplices so that $\omega \leq \omega_{i}$ and $\omega \leq \omega_{i+1}$. Then $\tau_{0}^{i},\cdots,\tau_{n_{i}}^{i}$ and $\tau_{0}^{i+1},\cdots,\tau_{n_{i+1}}^{i+1}$ satisfy the conditions of Lemma \ref{lemrecurrencechemin}. Indeed, $\rho(\tau_{1}^{i},\sigma) < \rho(\tau,\sigma)$ and $\rho(\tau_{1}^{i+1},\sigma) < \rho(\tau,\sigma)$. Since $\omega_{i} \in H(\tau_{1}^{i},\sigma)$ and $\omega \leq \omega_{i}$, we have that $\omega \in H(\tau_{1}^{i},\sigma)$, and likewise $\omega \in H(\tau_{1}^{i+1},\sigma)$. Then $\rho(\tau,\omega_{i})<\rho(\tau,\sigma)$ and $\rho(\tau,\omega_{i+1})<\rho(\tau,\sigma)$, therefore, since $\omega=\omega_{i}$ or $\omega=\omega_{i+1}$, $\rho(\tau,\omega)<\rho(\tau,\sigma)$. Finally $\tau_{1}^{i} < \tau$ so $\tau_{1}^{i} \in H(\tau,\omega)$ and likewise $\tau_{1}^{i+1} \in H(\tau,\omega)$. Hence Lemma \ref{lemrecurrencechemin} tells us that $\varepsilon_i=\varepsilon_{i+1}$. This being true for every $i \in \{0,\cdots,2k-1\}$ we get $\varepsilon_0=\varepsilon_{2k}$.

\bigskip

The admissible path $\tau_{0},\cdots,\tau_{n}$ and $\tau_{0}^{0},\cdots,\tau_{n_{0}}^{0}$ also satisfy the conditions of \ref{lemrecurrencechemin} with $\omega = \omega_{0}$ so $\varepsilon^{\tau_{n-1}}_{\tau_{n}} \circ \cdots \circ \varepsilon_{\tau_{1}}^{\tau_{0}}=\varepsilon_0$. It is the same with $\tau_{0}',\cdots,\tau_{m}'$ and $\tau_{0}^{2k},\cdots,\tau_{n_{2k}}^{2k}$ by taking $\omega=\omega_{2k}$ so $\varepsilon^{\tau_{m-1}'}_{\tau_{m}'} \circ \cdots \circ \varepsilon_{\tau_{1}'}^{\tau_{0}'}=\varepsilon_{2k}$. We finally find that
\[\varepsilon^{\tau_{n-1}}_{\tau_{n}} \circ \cdots \circ \varepsilon_{\tau_{1}}^{\tau_{0}}=\varepsilon^{\tau_{m-1}'}_{\tau_{m}'} \circ \cdots \circ \varepsilon_{\tau_{1}'}^{\tau_{0}'},\]
which completes the proof.
\end{proof}

Now that we have shown that these local maps are well defined (using admissible paths) it is easy to check that they satisfy some good properties. In particular, we have the following proposition.

\begin{Pro}
\label{proEpsilonCompo}
Let $\tau,\sigma,\omega \in \bt$ such that $\omega \in H(\tau,\sigma)$. Then
\[ \varepsilon^{\tau}_{\sigma}=\varepsilon^{\omega}_{\sigma} \circ \varepsilon^{\tau}_{\omega}.\]
\end{Pro}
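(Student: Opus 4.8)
The plan is to reduce the identity $\varepsilon^{\tau}_{\sigma}=\varepsilon^{\omega}_{\sigma} \circ \varepsilon^{\tau}_{\omega}$ to the fact, already established in Proposition \ref{proIndependanceChemin}, that $\varepsilon^{\tau}_{\sigma}$ can be computed along \emph{any} admissible path from $\tau$ to $\sigma$. So the strategy is simply: exhibit one admissible path from $\tau$ to $\sigma$ that visits $\omega$ and along which the composite of local maps visibly factors as $\varepsilon^{\omega}_{\sigma}\circ\varepsilon^{\tau}_{\omega}$.

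First I would invoke Lemma \ref{lemCheminOmega}: since $\omega \in H(\tau,\sigma)$, there is an admissible path $\tau_{0}=\tau,\cdots,\tau_{k}=\omega,\cdots,\tau_{n}=\sigma$ from $\tau$ to $\sigma$ such that the initial segment $\tau_{0},\cdots,\tau_{k}$ is itself an admissible path from $\tau$ to $\omega$. By part 1 of Lemma \ref{lemCheminsTendus}, the terminal segment $\tau_{k},\cdots,\tau_{n}$ is then an admissible path from $\omega$ to $\sigma$. Along this particular path the composite defining $\varepsilon^{\tau}_{\sigma}$ is
\[
\varepsilon^{\tau_{n-1}}_{\tau_{n}} \circ \cdots \circ \varepsilon_{\tau_{1}}^{\tau_{0}}
= \bigl(\varepsilon^{\tau_{n-1}}_{\tau_{n}} \circ \cdots \circ \varepsilon^{\tau_{k}}_{\tau_{k+1}}\bigr) \circ \bigl(\varepsilon^{\tau_{k-1}}_{\tau_{k}} \circ \cdots \circ \varepsilon_{\tau_{1}}^{\tau_{0}}\bigr),
\]
where the first parenthesized factor is, by definition, a valid expression for $\varepsilon^{\omega}_{\sigma}$ computed along $\tau_{k},\cdots,\tau_{n}$, and the second is a valid expression for $\varepsilon^{\tau}_{\omega}$ computed along $\tau_{0},\cdots,\tau_{k}$.

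To conclude I would apply Proposition \ref{proIndependanceChemin} three times: the left-hand side equals $\varepsilon^{\tau}_{\sigma}$ because the chosen path is an admissible path from $\tau$ to $\sigma$; the first factor on the right equals $\varepsilon^{\omega}_{\sigma}$; and the second factor equals $\varepsilon^{\tau}_{\omega}$. Combining these gives $\varepsilon^{\tau}_{\sigma}=\varepsilon^{\omega}_{\sigma}\circ\varepsilon^{\tau}_{\omega}$, as desired. There is no real obstacle here: all the work has been front-loaded into the path-independence result (Proposition \ref{proIndependanceChemin}) and the existence of a suitably factored admissible path (Lemma \ref{lemCheminOmega}); the present proposition is just the pay-off, the one-line consequence of "local maps don't depend on the path and admissible paths concatenate".
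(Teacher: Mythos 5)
Your argument is correct and is essentially the paper's own proof: both rely on Lemma \ref{lemCheminOmega} to produce an admissible path through $\omega$ whose initial and terminal segments are themselves admissible, and then conclude by the path-independence of Proposition \ref{proIndependanceChemin}. The only addition is your explicit appeal to Lemma \ref{lemCheminsTendus}(1) for the terminal segment, which the paper leaves implicit.
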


\begin{proof}
Proposition \ref{proIndependanceChemin} tells us that $\varepsilon^{\tau}_{\sigma}$ is independent of the chosen admissible path. All we have to do is then take, thanks to Lemma \ref{lemCheminOmega}, an admissible path between $\tau$ and $\sigma$ going though $\omega$ such that the two paths from $\tau$ to $\omega$ and from $\omega$ to $\sigma$ are admissible.
\end{proof}

\begin{Rem}
Proposition \ref{proEpsilonCompo} proves Lemmas (2.2.8), (2.2.9) and (2.2.10) of \cite{Wanga}.
\end{Rem}

\section{Equivalence of categories}

\label{seceqcat}

In this section, we finish the proof of the two theorems stated in the introduction. In the previous section, we have shown how to define the local maps $\varepsilon^{\tau}_{\sigma}$ in the case of any reductive group, and that these maps satisfy the same properties as in \cite{Wanga}. Hence, we can follow Section 2.3 and 2.4 in \cite{Wanga} to get the result. For the reader's convenience, we will recall here the proof. We will refer to \cite{Wanga} and \cite{meyer_resolutions_2010} for full details of the proofs.

\bigskip

Let $\Gamma=(V_\sigma)_{\sigma \in \bt}$ be a $e$-coefficient system. Let $\Sigma$ be a finite convex subcomplex of $\bt$. Fix a vertex $x \in \Sigma$ and denote by $\underline{V_x}:=(\sigma \mapsto V_x, \varphi_x^\sigma=\Id_{V_x})$ the constant coefficient system with value in $V_x$. The local maps $\{ \varepsilon_x^\sigma\}_{\sigma \in \Sigma}$ induce a morphism of coefficient system and so of chain complexes
\[
	\oplus_{\sigma \in \Sigma} \varepsilon_x^\sigma : C_{*}(\Sigma,\Gamma) \to C_{*}(\Sigma,\underline{V_x})
\]
and a morphism on homologies
\[
	p_x^\Sigma := H_0(\oplus \varepsilon_x^\sigma) : H_0(\Sigma,\Gamma) \to H_0(\Sigma, \underline{V_x}).
\]
Let us remark, that since $\Sigma$ is finite convex, it is contractible, thus $C_{*}(\Sigma,\underline{V_x})$ is a resolution of $V_x$. In particular, $H_0(\Sigma, \underline{V_x})=V_x$ and $p_x^\Sigma : H_0(\Sigma,\Gamma) \to V_x$.

\medskip

Let $\Sigma' \subseteq \Sigma$ be a finite convex subcomplex of $\Sigma$. We have a morphism of complexes
\[
	\oplus_{\sigma \in \Sigma'} \Id_{V_\sigma} : C_{*}(\Sigma',\Gamma) \to C_{*}(\Sigma,\Gamma)
\]
and a morphism on homologies
\[
	i_{\Sigma'}^\Sigma := H_0(\oplus \Id_{V_\sigma}) : H_0(\Sigma',\Gamma) \to H_0(\Sigma,\Gamma)
\]

\medskip

From the definition of $p_x^\Sigma$ it follows the next lemma.
\begin{Lem}[{\cite[Lem. 2.3.1]{Wanga}}]
	\label{lemipcompo}
	The composition of the homology morphisms
	\[
		H_0(\{y\},\Gamma)=V_y \overset{i_{y}^\Sigma}{\longrightarrow} H_0(\Sigma,\Gamma) \overset{p_{x}^\Sigma}{\longrightarrow} H_0(\Sigma, \underline{V_x})=V_x
	\]
	is $\varepsilon_x^y$. In particular $p_x^\Sigma \circ i_x^\Sigma=\Id_{V_x}$.
\end{Lem}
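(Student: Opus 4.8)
The plan is to unwind the definitions of the two homology morphisms $i_y^\Sigma$ and $p_x^\Sigma$ at the level of chain complexes and track a generator of $V_y$ through the composite. Recall that $V_y = C_0(\{y\},\Gamma) = H_0(\{y\},\Gamma)$ since the one-vertex complex is trivially its own zeroth homology. The morphism $i_y^\Sigma$ is induced by the chain map $\oplus_{\sigma}\Id_{V_\sigma} : C_*(\{y\},\Gamma)\to C_*(\Sigma,\Gamma)$, so on $H_0$ it sends the class of $v\in V_y$ to the class of the $0$-chain supported at $y$ with value $v$. Then $p_x^\Sigma$ is induced on $H_0$ by $\oplus_{\sigma\in\Sigma}\varepsilon_x^\sigma : C_*(\Sigma,\Gamma)\to C_*(\Sigma,\underline{V_x})$, followed by the canonical identification $H_0(\Sigma,\underline{V_x})=V_x$ coming from the contractibility of $\Sigma$. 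So the composite sends $v$ to the image in $V_x=H_0(\Sigma,\underline{V_x})$ of the $0$-chain of $C_*(\Sigma,\underline{V_x})$ supported at $y$ with value $\varepsilon_x^y(v)$.

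The key step is therefore to identify, in $H_0(\Sigma,\underline{V_x})$, the class of a $0$-chain supported at an arbitrary vertex $y\in\Sigma$ with value $w\in V_x$. First I would pin down the identification $H_0(\Sigma,\underline{V_x})\overset{\sim}{\to}V_x$: since $\underline{V_x}$ is the constant system and $\Sigma$ is a finite convex (hence contractible) subcomplex, $C_*(\Sigma,\underline{V_x})$ computes $H_*(\Sigma;R)\otimes_R V_x$, so $H_0=V_x$ with the class of any $0$-chain $(w_z)_{z\in\Sigma_0}$ mapping to $\sum_z w_z$ — equivalently, the class of a chain supported at a single vertex with value $w$ maps to $w$, independently of which vertex. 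This last independence is exactly the statement that $C_*(\Sigma,\underline{V_x})$ is connected in degree $0$: for two adjacent vertices $y,z$ the edge $[y,z]$ gives a $1$-chain whose boundary is (up to sign) the difference of the two vertex-$0$-chains, and a path in the convex complex $\Sigma$ then shows the class of the chain supported at $y$ with value $w$ equals the class supported at $x$ with value $w$. Hence the composite sends $v$ to $\varepsilon_x^y(v)$, which is the first assertion. The second assertion, $p_x^\Sigma\circ i_x^\Sigma=\Id_{V_x}$, is then immediate by taking $y=x$ and noting that $\varepsilon_x^x=\Id_{V_x}$ (the admissible path of length $0$).

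I expect the only genuine point requiring care is the identification of $H_0(\Sigma,\underline{V_x})$ with $V_x$ and the verification that the class of a vertex-supported $0$-chain is ``where you put it, independent of the vertex'' — that is, genuinely spelling out the connectivity argument in the chain complex rather than just invoking $H_0(\Sigma;R)=R$. Everything else is a matter of composing two explicitly described chain maps and reading off the effect on $0$-chains; no properties of the system of idempotents beyond what was used to define $\varepsilon_x^\sigma$ are needed. Since this lemma is quoted from \cite[Lem.~2.3.1]{Wanga}, I would keep the write-up short: recall the two chain maps, state the identification $H_0(\Sigma,\underline{V_x})=V_x$ with its explicit description, and conclude.
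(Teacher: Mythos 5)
Your proposal is correct and is exactly the argument the paper intends: the paper gives no proof beyond the remark that the lemma ``follows from the definition of $p_x^\Sigma$,'' and your unwinding of the two chain maps, together with the observation that in $H_0(\Sigma,\underline{V_x})\cong V_x$ the class of a vertex-supported $0$-chain equals its value independently of the supporting vertex (by connectedness of the convex complex $\Sigma$), is precisely the routine verification being omitted. The special case $y=x$ with $\varepsilon_x^x=\Id_{V_x}$ is also handled correctly.
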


Thus we can define an idempotent $e_x^\Sigma$ by
\[
	e_x^\Sigma: H_0(\Sigma,\Gamma) \overset{p_{x}^\Sigma}{\longrightarrow} V_x \overset{i_{x}^\Sigma}{\longrightarrow} H_0(\Sigma,\Gamma).
\]

The properties of $\varepsilon_\sigma^\tau$ proved in Proposition \ref{proEpsilonCompo} allows us to show that the idempotents $e_x^\Sigma$ satisfies the consistency properties of \cite{meyer_resolutions_2010}. Denote by $\Sigma_0$ the subset of $\Sigma$ of vertices.

\begin{Pro}
	\label{prosystconst}
	We have the following properties:
	\begin{enumerate}
		\item If $x,y,z \in \Sigma_0$, $z,x$ are adjacent and $z \in H(x,y)$, then $e_x^\Sigma \circ e_z^\Sigma \circ e_y^\Sigma = e_x^\Sigma \circ e_y^\Sigma$.
		\item If $x,y \in\Sigma_0$ are adjacent, then $e_x^\Sigma \circ e_y^\Sigma =e_y^\Sigma \circ e_x^\Sigma$.	
	\end{enumerate}
\end{Pro}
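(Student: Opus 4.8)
The plan is to reduce both statements to identities about the local maps $\varepsilon_\sigma^\tau$ that have already been established, chiefly the composition formula of Proposition \ref{proEpsilonCompo}, together with the explicit description of $p_x^\Sigma$ and $i_x^\Sigma$ coming from Lemma \ref{lemipcompo}. Recall that for vertices $y,x \in \Sigma_0$ the composite $p_x^\Sigma \circ i_y^\Sigma : V_y \to V_x$ equals $\varepsilon_x^y$. Consequently, for vertices $x,y$, the idempotent $e_x^\Sigma$ restricted along $i_y^\Sigma$ is simply $i_x^\Sigma \circ \varepsilon_x^y$, and more generally $e_x^\Sigma \circ e_y^\Sigma = i_x^\Sigma \circ \varepsilon_x^y \circ p_y^\Sigma$ once we have checked $p_x^\Sigma \circ i_x^\Sigma = \Id$ (this is the last assertion of Lemma \ref{lemipcompo}). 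So the whole proposition comes down to understanding how the maps $\varepsilon_x^y$ compose when mediated by the $p$'s and $i$'s, and the only genuinely new input needed is a compatibility between $p_y^\Sigma$ on one side and the local maps $\varepsilon$ on the other.

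For part (2), the adjacency hypothesis is what makes things work: if $x$ and $y$ are adjacent vertices then $H(x,y)$ is the closure of the edge $[x,y]$, and by Corollary \ref{coradjacent} the map $\varepsilon_x^y$ is computed by the short admissible path $x, [x,y], y$, i.e. $\varepsilon_x^y = \varphi_x^{[x,y]} \circ p_{[x,y]}^y$ (inclusion after projection). Here I would express both $e_x^\Sigma \circ e_y^\Sigma$ and $e_y^\Sigma \circ e_x^\Sigma$ through the vertex $[x,y]$ — more precisely, insert the coefficient-system value $V_{[x,y]}$ and use that $e_x e_y = e_{[x,y]} = e_y e_x$ at the level of the idempotents of the system $e$ (property (1) of Definition \ref{defcoherent} and the identification of $V_\sigma$ with $e_\sigma(V_x)$). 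The point is that $p_y^\Sigma$ followed by $i_x^\Sigma$ factors, after applying $e_x^\Sigma$, through $e_{[x,y]}$ acting on $V_x$, and this is manifestly symmetric in $x$ and $y$. This mirrors Wang's argument in \cite[Sec. 2.3]{Wanga} and uses nothing beyond what is in the excerpt.

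For part (1), with $z$ adjacent to $x$ and $z \in H(x,y)$, the strategy is to use Proposition \ref{proEpsilonCompo} to factor $\varepsilon_x^y$ through $z$: since $z \in H(x,y)$ we have $\varepsilon_x^y = \varepsilon_x^z \circ \varepsilon_z^y$. Translating into idempotents, $e_x^\Sigma \circ e_y^\Sigma = i_x^\Sigma \circ \varepsilon_x^y \circ p_y^\Sigma = i_x^\Sigma \circ \varepsilon_x^z \circ \varepsilon_z^y \circ p_y^\Sigma$, while $e_x^\Sigma \circ e_z^\Sigma \circ e_y^\Sigma = i_x^\Sigma \circ \varepsilon_x^z \circ p_z^\Sigma \circ i_z^\Sigma \circ \varepsilon_z^y \circ p_y^\Sigma$, and this collapses to the same thing because $p_z^\Sigma \circ i_z^\Sigma = \Id_{V_z}$ by Lemma \ref{lemipcompo}. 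So part (1) is in fact a formal consequence of the composition law for the $\varepsilon$'s plus $p_z \circ i_z = \Id$; the adjacency of $z$ and $x$ is only needed to guarantee (via Corollary \ref{coradjacent}) that $\varepsilon_x^z$ is well-behaved, but since Proposition \ref{proIndependanceChemin} already gives independence of the path in full generality, this is automatic. The one step I would be most careful about — the main (mild) obstacle — is justifying the identity $e_x^\Sigma \circ e_y^\Sigma = i_x^\Sigma \circ \varepsilon_x^y \circ p_y^\Sigma$ rigorously: it requires knowing that $p_x^\Sigma \circ i_y^\Sigma = \varepsilon_x^y$ for \emph{all} pairs of vertices of $\Sigma$ (Lemma \ref{lemipcompo}) and that $e_x^\Sigma$, being $i_x^\Sigma \circ p_x^\Sigma$, satisfies $e_x^\Sigma \circ i_y^\Sigma = i_x^\Sigma \circ \varepsilon_x^y$; everything else is bookkeeping. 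I would write this out once and then invoke it for both parts.
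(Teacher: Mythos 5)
Your treatment of part (1) is correct and is essentially the paper's own argument: write $e_x^\Sigma\circ e_z^\Sigma\circ e_y^\Sigma$ and $e_x^\Sigma\circ e_y^\Sigma$ in terms of $i$'s and $p$'s, use $p_x^\Sigma\circ i_z^\Sigma=\varepsilon_x^z$ (Lemma \ref{lemipcompo}) and $p_z^\Sigma\circ i_z^\Sigma=\Id$, and conclude with $\varepsilon_x^z\circ\varepsilon_z^y=\varepsilon_x^y$ from Proposition \ref{proEpsilonCompo}. Your observation that adjacency of $z$ and $x$ plays no role there is also consistent with the paper.

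Part (2), however, has a genuine gap at exactly the step you wave through as ``manifestly symmetric.'' Evaluating on $i_w^\Sigma(V_w)$ (or composing with $p_y^\Sigma$, as you prefer), both sides do factor through the common map $\varepsilon_{[x,y]}^w\colon V_w\to V_{[x,y]}$; but one side then continues as $i_x^\Sigma\circ\varphi_x^{[x,y]}$ and the other as $i_y^\Sigma\circ\varphi_y^{[x,y]}$. These are two \emph{a priori different} maps $V_{[x,y]}\to H_0(\Sigma,\Gamma)$, one landing in the $V_x$ summand of $C_0$ and the other in the $V_y$ summand, and proving that they agree in $H_0=C_0/\partial C_1$ is the entire non-formal content of (2). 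It requires exhibiting an explicit element of $\bigoplus_{\dim\sigma=1}V_\sigma$ whose boundary is the difference. When $\dim[x,y]=1$ this is immediate from the definition of $\partial$, but in a general polysimplicial building $[x,y]$ need not be an edge — this is precisely the adaptation the paper flags — so one must choose a chain of vertices $z_0=x,z_1,\dots,z_r=y$ in $[x,y]$ with each $[z_i,z_{i+1}]$ of dimension $1$, put $b_{[z_i,z_{i+1}]}:=\varepsilon^{[x,y]}_{[z_i,z_{i+1}]}\circ\varepsilon^w_{[x,y]}(a)$, and check via Proposition \ref{proEpsilonCompo} that the interior contributions cancel and $\partial(b)$ equals the difference. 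Your appeal to $e_xe_y=e_{[x,y]}=e_ye_x$ for the Hecke-algebra idempotents cannot replace this: $\Gamma$ is an abstract $e$-coefficient system, not a priori of the form $\Gamma(V)$, so there is no ambient representation from which $H_0(\Sigma,\Gamma)$ would inherit commuting idempotents; the commutativity of the endomorphisms $e_x^\Sigma=i_x^\Sigma\circ p_x^\Sigma$ is what must be proved, and the homology computation above is the only place it actually happens.
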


\begin{proof}
	We follow the proof of \cite[Lem. 2.3.2]{Wanga}. We just need to adapt a little bit the proof of (2), because, for a general building, $[x,y]$ does not necessarily have dimension 1.

	Since $H_0(\Sigma,\Gamma)=\sum_{w \in \Sigma_0} i_w^\Sigma(V_w)$, it is enough to prove that for all $w\in \Sigma_0$, $e_x^\Sigma \circ e_y^\Sigma =e_y^\Sigma \circ e_x^\Sigma$ (resp. $e_x^\Sigma \circ e_z^\Sigma \circ e_y^\Sigma = e_x^\Sigma \circ e_y^\Sigma$) on $i_w^\Sigma(V_w)$.

	\medskip

	Let us start by (1). By Lemma \ref{lemipcompo}, we have
	\[
		e_x^\Sigma \circ e_z^\Sigma \circ e_y^\Sigma \circ i_w^\Sigma = i_x^\Sigma \circ p_x^\Sigma \circ i_z^\Sigma \circ p_z^\Sigma \circ i_y^\Sigma \circ p_y^\Sigma \circ i_w^\Sigma = i_x^\Sigma \circ \varepsilon^z_x \circ \varepsilon^y_z \circ \varepsilon^w_y
	\]
	and
	\[
		e_x^\Sigma \circ e_y^\Sigma \circ i_w^\Sigma = i_x^\Sigma \circ p_x^\Sigma \circ i_y^\Sigma \circ p_y^\Sigma \circ i_w^\Sigma = i_x^\Sigma \circ \varepsilon^y_x \circ  \varepsilon^w_y
	\]
	We obtain the equality with Proposition \ref{proEpsilonCompo}.

	\medskip

	Now, we prove (2). Let $a \in V_w$. We have
	\[
		e_x^\Sigma \circ e_y^\Sigma \circ i_w^\Sigma(a) -e_y^\Sigma \circ e_x^\Sigma \circ i_w^\Sigma (a) = i_x^\Sigma \circ \varepsilon^y_x \circ  \varepsilon^w_y(a)-i_y^\Sigma \circ \varepsilon^x_y \circ  \varepsilon^w_x(a).
	\]
	Denote by $i_x'^\Sigma : V_x \to V_x \oplus V_y \oplus \bigoplus_{s \in \Sigma_0 \setminus \{x,y\}} V_s$ (resp. $i_y'^\Sigma : V_y \to V_x \oplus V_y \oplus \bigoplus_{s \in \Sigma_0 \setminus \{x,y\}} V_s$) be the natural embedding $u \mapsto (u,0,0,\cdots)$ (resp. $v \mapsto (0,v,0,\cdots)$). We have two commutative diagrams
	\begin{gather*}
		\xymatrix{
   		 V_x \ar[r]^-{i_x'^\Sigma} \ar[dr]^-{i_x^\Sigma} & \bigoplus_{s \in \Sigma_0 } V_s \ar@{->>}[d] \\
    	  & H_0(\Sigma,\Gamma)
  		} 
  		\quad \text{ and } \quad
  		\xymatrix{
   		 V_y \ar[r]^-{i_y'^\Sigma} \ar[dr]^-{i_y^\Sigma} & \bigoplus_{s \in \Sigma_0 } V_s \ar@{->>}[d] \\
    	  & H_0(\Sigma,\Gamma)
  		}.
	\end{gather*}
	Thus, we need to show that $i_x'^\Sigma \circ \varepsilon^y_x \circ  \varepsilon^w_y(a)-i_y'^\Sigma \circ \varepsilon^x_y \circ  \varepsilon^w_x(a) \in \partial(\oplus_{\sigma \in \Sigma, \dim(\sigma)=1}V_\sigma)$. Let $z_0:=x,z_1,\cdots,z_r:=y$ be a sequence of vertices of $[x,y]$ such that for all $0 \leq i \leq r-1$, $\dim([z_i,z_{i+1}])=1$, and $r$ is minimal. Now we define an element $b \in \oplus_{\sigma \in \Sigma, \dim(\sigma)=1}V_\sigma$ by $b_{[z_i,z_{i+1}]}:= \varepsilon^{[x,y]}_{[z_i,z_{i+1}]} \circ \varepsilon^{w}_{[x,y]} (a) \in V_{[z_i,z_{i+1}]}$ and 0 elsewhere. By Lemma \ref{proEpsilonCompo}, 
	\begin{align*}
		\varphi^{[x,z_1]}_x(b_{[x,z_1]}) &=\varepsilon^{[x,z_1]}_x  \circ  \varepsilon^{[x,y]}_{[x,z_{1}]} \circ \varepsilon^{w}_{[x,y]} (a) = \varepsilon^{[x,y]}_{x} \circ \varepsilon^{w}_{[x,y]} (a) \\
		&= \varepsilon^{[x,y]}_{x} \circ \varepsilon^{y}_{[x,y]} \circ \varepsilon^{w}_{y} (a) = \varepsilon^{y}_{x} \circ \varepsilon^{w}_{y} (a).
	\end{align*}
	In the same way,
	\[
		\varphi^{[y,z_{r-1}]}_y(b_{[y,z_{r-1}]}) =\varepsilon^{[y,z_{r-1}]}_y  \circ  \varepsilon^{[x,y]}_{[y,z_{r-1}]} \circ \varepsilon^{x}_{[x,y]} \circ \varepsilon^{w}_{x} (a)   = \varepsilon^{x}_{y} \circ \varepsilon^{w}_{x} (a).
	\]
	Also, for $1 \leq i \leq r-1$,
	\[
	 	\varphi^{[z_i,z_{i+1}]}_{z_i}(b_{[z_i,z_{i+1}]}) = \varepsilon^{[z_i,z_{i+1}]}_{z_i} \circ \varepsilon^{[x,y]}_{[z_i,z_{i+1}]} \circ \varepsilon^{w}_{[x,y]} (a) = \varepsilon^{[x,y]}_{z_i} \circ \varepsilon^{w}_{[x,y]} (a)
	 \]
	 \[
	 	\varphi^{[z_i,z_{i-1}]}_{z_{i}}(b_{[z_i,z_{i-1}]}) = \varepsilon^{[z_i,z_{i-1}]}_{z_{i}} \circ \varepsilon^{[x,y]}_{[z_i,z_{i-1}]} \circ \varepsilon^{w}_{[x,y]} (a) = \varepsilon^{[x,y]}_{z_{i}} \circ \varepsilon^{w}_{[x,y]} (a)
	 \]
	 Therefore 
	 \[
	 	\varphi^{[z_i,z_{i+1}]}_{z_i}(b_{[z_i,z_{i+1}]}) - \varphi^{[z_i,z_{i-1}]}_{z_{i}}(b_{[z_i,z_{i-1}]}) = 0.
	 \]

	Thus, $i_x'^\Sigma \circ \varepsilon^y_x \circ  \varepsilon^w_y(a)-i_y'^\Sigma \circ \varepsilon^x_y \circ  \varepsilon^w_x(a) = \partial(b)$ and this finish the proof.
		
\end{proof}

Now, since this system of idempotents satisfies the consistency properties, we can follow the strategy of Meyer and Solleveld. Thanks to property (2), we can define $e_\sigma^\Sigma := \prod_{x \leq \sigma, x \in \Sigma_0} e_x^\Sigma \in \End(H_0(\Sigma,\Gamma))$ and, as in \cite[Thm. 2.12]{meyer_resolutions_2010}, $u_{\Sigma'}^{\Sigma}:=\sum_{\sigma \in \Sigma'} (-1)^{\dim(\sigma)} e_{\sigma}^{\Sigma}$.

\begin{Pro}
	\label{prousig}
	We have
	\begin{align*}
		u_{\Sigma'}^{\Sigma}(H_0(\Sigma,\Gamma))&=\sum_{x\in \Sigma'_0} \im(e_x^{\Sigma})\\
		\Ker(u_{\Sigma'}^{\Sigma})&=\bigcap_{x\in \Sigma'_0} \Ker(e_x^{\Sigma})\\
		H_0(\Sigma,\Gamma)&=u_{\Sigma'}^{\Sigma}(H_0(\Sigma,\Gamma)) \oplus \Ker(u_{\Sigma'}^{\Sigma})
	\end{align*}

	Moreover, if $\Sigma'=\Sigma$, then $\Ker(u_{\Sigma}^{\Sigma})=\bigcap_{x\in \Sigma_0} \Ker(e_x^{\Sigma})=0$.
\end{Pro}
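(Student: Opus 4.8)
The plan is to follow the combinatorial inclusion–exclusion argument of Meyer–Solleveld \cite[Thm. 2.12]{meyer_resolutions_2010}, which works formally once one has a family of idempotents $(e_x^\Sigma)_{x \in \Sigma_0}$ in $\End(H_0(\Sigma,\Gamma))$ satisfying the two consistency properties of Proposition \ref{prosystconst}. First I would reduce the three displayed identities to a single statement about the operator $u_{\Sigma'}^\Sigma = \sum_{\sigma \in \Sigma'} (-1)^{\dim \sigma} e_\sigma^\Sigma$, namely that $u_{\Sigma'}^\Sigma$ is an idempotent with image $\sum_{x \in \Sigma'_0} \im(e_x^\Sigma)$ and kernel $\bigcap_{x \in \Sigma'_0} \Ker(e_x^\Sigma)$; the direct sum decomposition is then automatic from idempotency. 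The key algebraic input is that the $e_x^\Sigma$ for $x$ in a common polysimplex commute (property (2) of Proposition \ref{prosystconst}), so that for each $\sigma$ the product $e_\sigma^\Sigma = \prod_{x \leq \sigma} e_x^\Sigma$ is a well-defined idempotent, and moreover property (1) gives the "absorption" relations $e_x^\Sigma e_\sigma^\Sigma = e_\sigma^\Sigma$ whenever $x \leq \sigma$, and more generally controls products $e_\sigma^\Sigma e_\tau^\Sigma$ in terms of $e_{[\sigma,\tau]}^\Sigma$ when $\sigma,\tau$ are adjacent.

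The main step is the inclusion–exclusion computation. Writing $P = \sum_{x \in \Sigma'_0} \im(e_x^\Sigma)$, I would show $u_{\Sigma'}^\Sigma$ acts as the identity on $P$ and as zero on $\bigcap_x \Ker(e_x^\Sigma)$, and that $H_0(\Sigma,\Gamma)$ is the sum of these two subspaces. For the first, fix $x \in \Sigma'_0$ and a vector $v \in \im(e_x^\Sigma)$, so $e_x^\Sigma v = v$; group the sum $\sum_\sigma (-1)^{\dim\sigma} e_\sigma^\Sigma v$ according to whether $x \leq \sigma$ or not. On the terms with $x \leq \sigma$ one has $e_\sigma^\Sigma v = e_\sigma^\Sigma e_x^\Sigma v = e_\sigma^\Sigma v$, trivially, but the point is that pairing $\sigma$ (not containing $x$, when $[x,\sigma]$ makes sense in $\Sigma$) with $[x,\sigma]$ produces cancellation via $e_x^\Sigma e_\sigma^\Sigma = e_{[x,\sigma]}^\Sigma$ and the sign change $\dim[x,\sigma] = \dim\sigma + 1$ — this is the standard star-of-a-vertex cancellation, and it is exactly where convexity and finiteness of $\Sigma$ are used to ensure $[x,\sigma] \in \Sigma$. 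The surviving terms telescope to $e_x^\Sigma v = v$. The kernel statement is the mirror image: if $e_x^\Sigma v = 0$ for all $x \in \Sigma'_0$, then $e_\sigma^\Sigma v = 0$ for every $\sigma$ with $\Sigma'_0 \cap \sigma \neq \emptyset$, and the remaining sum (over $\sigma$ with no vertex in $\Sigma'_0$) again cancels in pairs. Idempotency of $u_{\Sigma'}^\Sigma$ then follows, or can be read off from $\im \subseteq P$, $P \subseteq \Ker(1 - u_{\Sigma'}^\Sigma)$ and $\Ker(u_{\Sigma'}^\Sigma) \supseteq \bigcap_x \Ker(e_x^\Sigma) \supseteq$ (a complement), closing the loop.

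For the final assertion, take $\Sigma' = \Sigma$: I must show $\bigcap_{x \in \Sigma_0} \Ker(e_x^\Sigma) = 0$. Recall $e_x^\Sigma = i_x^\Sigma \circ p_x^\Sigma$ with $p_x^\Sigma i_x^\Sigma = \Id$ by Lemma \ref{lemipcompo}, so $\Ker(e_x^\Sigma) = \Ker(p_x^\Sigma)$. Since $H_0(\Sigma,\Gamma) = \sum_{w \in \Sigma_0} i_w^\Sigma(V_w)$, it suffices to show that a class $\xi$ with $p_x^\Sigma \xi = 0$ for all $x$ is zero; writing $\xi = \sum_w i_w^\Sigma(a_w)$ and using $p_x^\Sigma i_w^\Sigma = \varepsilon_x^w$ (Lemma \ref{lemipcompo}), one gets $\sum_w \varepsilon_x^w(a_w) = 0$ in $V_x$ for every vertex $x$, which by the very definition of the chain complex and of $H_0$ forces the $1$-chain witnessing these relations; more directly, $u_\Sigma^\Sigma$ has image all of $H_0(\Sigma,\Gamma)$ (every $i_w^\Sigma(V_w) \subseteq \im(e_w^\Sigma)$), so $H_0(\Sigma,\Gamma) = \im(u_\Sigma^\Sigma) \oplus \Ker(u_\Sigma^\Sigma)$ with $\im(u_\Sigma^\Sigma) = H_0(\Sigma,\Gamma)$, whence $\Ker(u_\Sigma^\Sigma) = \bigcap_x \Ker(e_x^\Sigma) = 0$. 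I expect the genuine obstacle to be purely bookkeeping: checking carefully that all the polysimplices $[x,\sigma]$ appearing in the cancellation argument indeed lie in the finite convex subcomplex $\Sigma$ (so that $e_{[x,\sigma]}^\Sigma$ is defined), and that the pairing $\sigma \leftrightarrow [x,\sigma]$ is an involution on the relevant index set with the correct sign — all of which is where the hypotheses that $\Sigma$ is finite and convex genuinely enter, exactly as in \cite[Thm. 2.12]{meyer_resolutions_2010}.
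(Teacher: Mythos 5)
Your proposal is correct and follows essentially the same route as the paper: Proposition \ref{prosystconst} supplies the consistency relations needed to invoke the inclusion--exclusion argument of Meyer--Solleveld \cite[Thm.\ 2.12]{meyer_resolutions_2010} for the three identities, and the case $\Sigma'=\Sigma$ is settled exactly as you say, from $H_0(\Sigma,\Gamma)=\sum_{x\in\Sigma_0} i_x^{\Sigma}(V_x)=\sum_{x\in\Sigma_0}\im(e_x^{\Sigma})$ together with the direct sum decomposition. The paper simply cites the Meyer--Solleveld proof rather than re-deriving the star-of-a-vertex cancellation, but the content is the same.
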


\begin{proof}
	By Proposition \ref{prosystconst}, the system of idempotent $(e_x^\Sigma)_{x \in \Sigma_0}$ satisfies the consistency properties of \cite{meyer_resolutions_2010}. Hence the proof of \cite[Thm. 2.12]{meyer_resolutions_2010} applies here. Now if $\Sigma'=\Sigma$, then $H_0(\Sigma,\Gamma)=\sum_{x\in \Sigma'_0} i_x^{\Sigma}(V_x) = \sum_{x\in \Sigma'_0} \im(e_x^{\Sigma})$ thus $\Ker(u_{\Sigma}^{\Sigma})=0$.
\end{proof}

\begin{Pro}[{\cite[Prop. 2.4.1 (b)]{Wanga}}]
	\label{proinjisigma}
	The map $i_{\Sigma'}^{\Sigma} : H_0(\Sigma',\Gamma) \to H_0(\Sigma,\Gamma)$ is injective.
\end{Pro}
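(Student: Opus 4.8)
The goal is to show that $i_{\Sigma'}^{\Sigma} : H_0(\Sigma',\Gamma) \to H_0(\Sigma,\Gamma)$ is injective whenever $\Sigma' \subseteq \Sigma$ are finite convex subcomplexes. The natural approach is to produce a one-sided inverse, or more precisely a retraction-type map, built from the idempotents $e_x^\Sigma$ and the projections $p_x^\Sigma$ already constructed. Concretely, I would first reduce to the case where $\Sigma$ is obtained from $\Sigma'$ by adding a single polysimplex (together with its faces, to keep things convex); a general $\Sigma' \subseteq \Sigma$ is reached by a finite chain of such one-step extensions, and injectivity is preserved under composition. So the real content is the one-step case.

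For the one-step case, suppose $\Sigma = \Sigma' \cup \{\tau\}$ where $\tau$ is a polysimplex all of whose proper faces already lie in $\Sigma'$. An element of $\Ker(i_{\Sigma'}^\Sigma)$ is represented by a cycle $c \in C_*(\Sigma',\Gamma)$ which becomes a boundary in $C_*(\Sigma,\Gamma)$; the extra boundaries available in $\Sigma$ come precisely from $V_\tau$ (if $\dim\tau = d$, this affects degree $d-1$ and, via $\partial$, nothing in lower degrees unless $d-1 \geq $ the degree of $c$). The key point is to use the maps $p_x^\Sigma$ and the decomposition of Proposition \ref{prousig}: since $H_0(\Sigma,\Gamma) = \sum_{x \in \Sigma_0} i_x^\Sigma(V_x) = \sum_{x \in \Sigma'_0} i_x^\Sigma(V_x)$ already when $\Sigma_0 = \Sigma'_0$ (the added $\tau$ contributes no new vertices if $\dim\tau \geq 1$), one shows that the surjection $\bigoplus_{x \in \Sigma'_0} V_x \twoheadrightarrow H_0(\Sigma',\Gamma)$ and $\bigoplus_{x \in \Sigma'_0} V_x \twoheadrightarrow H_0(\Sigma,\Gamma)$ have, compatibly, the same kernel. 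To see the kernels agree, one uses that $p_x^\Sigma \circ i_y^\Sigma = \varepsilon_x^y = p_x^{\Sigma'} \circ i_y^{\Sigma'}$ for $x,y \in \Sigma'_0$ (Lemma \ref{lemipcompo}, together with the fact that $\varepsilon_x^y$ is intrinsic, not depending on the ambient complex, by Proposition \ref{proIndependanceChemin}), so the family of projections $(p_x^\Sigma)_{x \in \Sigma'_0}$ detects exactly the same relations as $(p_x^{\Sigma'})_{x \in \Sigma'_0}$; hence $\Ker$ of the two surjections from $\bigoplus_{x \in \Sigma'_0} V_x$ coincide, giving $H_0(\Sigma',\Gamma) \xrightarrow{\sim} H_0(\Sigma,\Gamma)$ in this case, so $i_{\Sigma'}^\Sigma$ is even an isomorphism. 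The remaining subcase, where the added cell $\tau$ is a vertex, is handled directly: a new vertex $x_0$ in $\Sigma$ contributes $i_{x_0}^\Sigma(V_{x_0})$, but by convexity $x_0$ is adjacent to something in $\Sigma'$, and the relation $e_{x_0}^\Sigma$ factors through $p_{x_0}^\Sigma$ which in turn is controlled by the $\varepsilon$-maps from $\Sigma'$; one checks that $i_{\Sigma'}^\Sigma$ remains injective (it need not be surjective here).

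An alternative, and perhaps cleaner, route — which is likely what Wang does — is to work at the chain level directly: show that the map of complexes $C_*(\Sigma',\Gamma) \to C_*(\Sigma,\Gamma)$ induces an injection on $H_0$ by exhibiting, for the one-step extension by a cell $\tau$ of dimension $d$, an explicit chain homotopy or a splitting of $C_*(\Sigma,\Gamma)/C_*(\Sigma',\Gamma)$ (which is concentrated in degrees $d-1$ and $d$, being $V_\tau \xrightarrow{\cdot} \partial$) showing this quotient complex is exact in degree $0$ — indeed it is exact everywhere except possibly degree $d-1 \geq 0$, and it contributes to $H_0$ only when $d = 1$, where one uses the $e$-coefficient system property that $\varphi_x^\tau$ identifies $V_\tau$ with $e_\tau(V_x)$ to see the two boundary maps from $V_\tau$ into $V_x \oplus V_y$ are "independent enough".

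**Main obstacle.** The delicate point is the vertex case and, more generally, making rigorous the claim that the projections $p_x^\Sigma$ for $x$ ranging over $\Sigma'_0$ see exactly the same relations in $H_0(\Sigma,\Gamma)$ as they do in $H_0(\Sigma',\Gamma)$ — i.e. controlling the kernel of $\bigoplus_{x\in\Sigma'_0} V_x \twoheadrightarrow H_0(\Sigma,\Gamma)$. This amounts to understanding which $1$-chains in $\Sigma \setminus \Sigma'$ can produce new boundaries among the vertex-pieces, and here one must genuinely use convexity of $\Sigma'$ and $\Sigma$ (so that $[x,y] \subseteq \Sigma'$ whenever $x,y \in \Sigma'_0$) together with the consistency relations from Proposition \ref{prosystconst} to reduce any such new boundary to one already present in $\Sigma'$. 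I expect this bookkeeping — essentially a careful induction mirroring the proof of \cite[Prop. 2.4.1]{Wanga}, now with general polysimplices where $[x,y]$ need not be an edge — to be the bulk of the work, while the formal structure (reduce to one-step extensions, use $p_x \circ i_y = \varepsilon_x^y$) is routine given Proposition \ref{proEpsilonCompo} and Proposition \ref{prousig}.
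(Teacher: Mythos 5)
There is a gap, and it sits exactly where you locate the ``bulk of the work.'' Your reduction to one-step extensions, the separate treatment of the case where the added cell is a vertex, and the ``bookkeeping'' controlling which new boundaries appear in $C_*(\Sigma,\Gamma)\setminus C_*(\Sigma',\Gamma)$ are all left unproven (``one checks that $i_{\Sigma'}^\Sigma$ remains injective'' is the statement itself), and they are also unnecessary. The step you are missing is to apply the separation property on $H_0(\Sigma',\Gamma)$ rather than trying to compare kernels of the two surjections from $\bigoplus_{x\in\Sigma'_0}V_x$: Proposition \ref{prousig}, applied to $\Sigma'$ in the role of both complexes, gives $\bigcap_{x\in\Sigma'_0}\Ker(e_x^{\Sigma'})=\Ker(u_{\Sigma'}^{\Sigma'})=0$, so the family $(p_x^{\Sigma'})_{x\in\Sigma'_0}$ already separates points of $H_0(\Sigma',\Gamma)$. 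Your attempted variant of this for $H_0(\Sigma,\Gamma)$ with $x$ ranging only over $\Sigma'_0$ is not what Proposition \ref{prousig} provides (it gives $\Ker(u_{\Sigma'}^{\Sigma})=\bigcap_{x\in\Sigma'_0}\Ker(e_x^{\Sigma})$, which vanishes only when $\Sigma'=\Sigma$), which is why you get stuck when $\Sigma$ acquires new vertices.

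The paper's proof is then three lines and needs no induction on cells: for $x\in\Sigma'_0$ the square
\[
\xymatrix{ H_0(\Sigma',\Gamma) \ar[r]^-{i_{\Sigma'}^{\Sigma}} \ar[d]_-{p_x^{\Sigma'}} & H_0(\Sigma,\Gamma) \ar[d]^-{p_x^{\Sigma}} \\ V_x \ar[r]^-{\Id} & V_x }
\]
commutes (check it on each generator $i_y^{\Sigma'}(V_y)$, where both composites are $\varepsilon_x^y$ by Lemma \ref{lemipcompo} and the intrinsic nature of the local maps from Proposition \ref{proIndependanceChemin} --- an ingredient you did correctly identify). Hence $i_{\Sigma'}^{\Sigma}(a)=0$ forces $p_x^{\Sigma'}(a)=0$, so $e_x^{\Sigma'}(a)=i_x^{\Sigma'}\circ p_x^{\Sigma'}(a)=0$ for all $x\in\Sigma'_0$, and $a=0$ by the displayed vanishing of $\bigcap_{x\in\Sigma'_0}\Ker(e_x^{\Sigma'})$. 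You had all the pieces on the table; the missing idea is that injectivity is detected entirely inside $\Sigma'$, so nothing about the geometry of $\Sigma\setminus\Sigma'$ ever needs to be analyzed.
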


\begin{proof}
	We have a commutative diagram 
	\[
	 \xymatrix{
    H_0(\Sigma',\Gamma) \ar[r]^-{i_\Sigma^{\Sigma'}} \ar[d]^-{p_x^{\Sigma'}} & H_0(\Sigma,\Gamma) \ar[d]^-{p_x^{\Sigma}} \\
     V_x \ar[r]^-{\Id} & V_x
  }
\]
Let $a \in H_0(\Sigma',\Gamma)$ such that $i_\Sigma^{\Sigma'}(a)=0$. By the previous diagram, for all $x \in \Sigma'_0$, $p_x^{\Sigma'}(a)=0$. Thus $e_x^{\Sigma'}(a)=i_x^{\Sigma'} \circ p_x^{\Sigma'}(a)=0$, and $a \in \bigcap_{x\in \Sigma'_0} \Ker(e_x^{\Sigma'})$. By Proposition \ref{prousig}, $a=0$.
\end{proof}

\begin{Rem}
	The proof of \cite[Prop. 2.4.1 (b)]{Wanga} also shows that the image of $i_{\Sigma'}^{\Sigma}$ is $u_{\Sigma'}^{\Sigma}(H_0(\Sigma,\Gamma))$.
\end{Rem}

\begin{Lem}
	\label{lemHnzero}
	Let $\Sigma$ be a finite convex subcomplex. For all $n>0$, $H_n(\Sigma,\Gamma)=0$.
\end{Lem}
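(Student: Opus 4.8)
The plan is to reduce the vanishing of $H_n(\Sigma,\Gamma)$ for $n>0$ to the combinatorial splitting result of Proposition \ref{prousig} via an induction on the number of vertices of $\Sigma$. First I would handle the base case: if $\Sigma$ consists of a single vertex $x$, then $C_*(\Sigma,\Gamma)$ is concentrated in degree $0$ (it is just $V_x$), so $H_n(\Sigma,\Gamma)=0$ for all $n>0$ trivially. For the inductive step, suppose $\Sigma$ has more than one vertex and pick a vertex $x\in\Sigma_0$ that is \emph{extremal} in $\Sigma$, i.e. such that $\Sigma':=\Sigma\setminus\{\sigma : x\leq\sigma\}$ is still a finite convex subcomplex (such a vertex exists; one may take $x$ maximal for the Bruhat--Tits distance from a fixed base point inside $\Sigma$, or invoke the analogous choice in \cite{Wanga} / \cite{meyer_resolutions_2010}). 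Then $\Sigma$ is obtained from $\Sigma'$ by adding the ``star'' of $x$, and one gets a short exact sequence of chain complexes
\[
0 \to C_*(\Sigma',\Gamma) \to C_*(\Sigma,\Gamma) \to C_*(\Sigma,\Sigma';\Gamma) \to 0,
\]
where the quotient complex $C_*(\Sigma,\Sigma';\Gamma)=\bigoplus_{x\leq\sigma}V_\sigma$ is the relative chain complex of the pair.

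The key computation is then that the relative complex $C_*(\Sigma,\Sigma';\Gamma)$ is acyclic, including in degree $0$. Geometrically the pair $(\Sigma,\Sigma')$ is, up to the convexity bookkeeping, the cone on the link of $x$ modulo that link, which is contractible relative to its base, so its (reduced) homology vanishes; the point is that for an $e$-coefficient system the local maps $\varepsilon_x^\sigma$ supply an explicit contracting homotopy on this relative complex, exactly as in \cite[Sec. 2.4]{Wanga}. Concretely, for $x\leq\sigma$ set $h(v_\sigma)$ to be, up to sign, $\varphi_x^{[x,\sigma]}$-type insertions $\varepsilon^{\sigma}_{[x,\sigma]}$ combined with $\varepsilon^{[x,\sigma]}_{\tau}$ for faces $\tau$ of $[x,\sigma]$ through $x$; that $\partial h + h\partial = \Id$ on $C_*(\Sigma,\Sigma';\Gamma)$ is a direct verification using the compatibility $\varepsilon^{\tau}_{\sigma}=\varepsilon^{\omega}_{\sigma}\circ\varepsilon^{\tau}_{\omega}$ from Proposition \ref{proEpsilonCompo} and the $e$-coefficient property $V_\sigma=e_\sigma(V_x)$. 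Granting this, the long exact sequence of the pair gives $H_n(\Sigma,\Gamma)\cong H_n(\Sigma',\Gamma)$ for $n\geq 2$ and an exact piece $0\to H_1(\Sigma',\Gamma)\to H_1(\Sigma,\Gamma)\to 0$ in degree $1$ (using $H_1$ of the relative complex is $0$ and the injectivity of $i_{\Sigma'}^\Sigma$ from Proposition \ref{proinjisigma}); by the induction hypothesis $H_n(\Sigma',\Gamma)=0$ for $n>0$, hence $H_n(\Sigma,\Gamma)=0$ for $n>0$.

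An alternative, and perhaps cleaner, route that avoids choosing an extremal vertex is to argue directly from Proposition \ref{prousig}: it shows that the idempotent $u_\Sigma^\Sigma$ is the identity on $H_0(\Sigma,\Gamma)$, which is the statement that the augmentation $C_*(\Sigma,\Gamma)\to H_0(\Sigma,\Gamma)$ admits a splitting compatible with all the $e_x^\Sigma$; one then upgrades this to a chain contraction of the augmented complex $C_*(\Sigma,\Gamma)\to H_0(\Sigma,\Gamma)\to 0$ by the same Meyer--Solleveld alternating-sum formula (this is precisely how \cite[Thm. 2.12, Cor.]{meyer_resolutions_2010} yields acyclicity). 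Either way, the only genuine obstacle is verifying the homotopy identity $\partial h+h\partial=\Id$ in positive degrees, i.e. checking that the local maps behave well with respect to the simplicial boundary on the star of $x$; this is where the independence of $\varepsilon_\sigma^\tau$ from the admissible path (Proposition \ref{proIndependanceChemin}) and the composition law (Proposition \ref{proEpsilonCompo}) are used, and it is a routine but slightly lengthy sign-chase that mirrors \cite{Wanga}.
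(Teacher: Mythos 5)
Your overall strategy (induct on the size of $\Sigma$, peel off the star of a vertex, and use the long exact sequence of the pair) is plausible in outline, but two of its load-bearing steps are not established and one of them is false as stated. First, the existence of a vertex $x$ such that $\Sigma\setminus\{\sigma : x\leq\sigma\}$ is again a finite \emph{convex} subcomplex is asserted with a hand-wave; this is a nontrivial geometric fact about convex subcomplexes of polysimplicial buildings, and it is exactly the kind of input that the paper instead imports from Meyer--Solleveld's decomposition $\Sigma=\Sigma_{+}\cup\Sigma_{-}$ into two proper convex halves meeting in a convex $\Sigma_{+}\cap\Sigma_{-}$, which is proved in \cite[Section 2.5]{meyer_resolutions_2010}. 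Second, and more seriously, your key claim that the relative complex $C_{*}(\Sigma,\Sigma';\Gamma)=\bigoplus_{x\leq\sigma}V_\sigma$ is acyclic \emph{including in degree $0$}, via a contracting homotopy with $\partial h+h\partial=\Id$, cannot hold: in degree $0$ the relative complex is just $V_x$, and the image of $\partial$ from degree $1$ is $\sum_{\sigma\ni x,\,\dim\sigma=1}e_\sigma(V_x)$, so $H_0$ of the relative complex is $V_x/\sum_\sigma e_\sigma(V_x)$, which is nonzero for a general $e$-coefficient system. Hence no such chain contraction exists, and the formula you sketch is in any case degenerate (for $x\leq\sigma$ one has $[x,\sigma]=\sigma$, so the maps $\varepsilon^{\sigma}_{[x,\sigma]}$ you invoke are identities and do not raise degree).

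What your argument actually requires is only the vanishing of the relative homology in degrees $\geq 1$, but that is precisely the hard local statement, and you give no argument for it; it does not follow formally from Proposition \ref{prousig}, which is a statement about $H_0$ only. The paper's proof confronts this difficulty head-on in its base case (a single polysimplex): it builds idempotents $e_x^\sigma=\varepsilon_{\sigma}^{[x,\sigma]}\circ\varepsilon^{\sigma}_{[x,\sigma]}$ on each $V_\sigma$ for \emph{all} vertices $x$ of $\Sigma$ (not only those with $x\leq\sigma$), verifies their consistency using Lemma \ref{lemadjacent}, and decomposes the whole chain complex as $\bigoplus_I e_I^{0}(C_{*}(\Sigma,\Gamma))$, each nonzero summand computing the homology of a contractible face with \emph{constant} coefficients. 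The reduction of the general convex $\Sigma$ to this base case is then done by Mayer--Vietoris together with the injectivity of $i_{\Sigma_0}^{\Sigma_{+}}$ (Proposition \ref{proinjisigma}) to kill $H_1$. To repair your proof you would need to (i) prove the convexity claim for $\Sigma'$, and (ii) replace the false contraction by an actual computation of the relative homology in positive degrees, which in substance amounts to redoing the paper's $e_I^{\sigma,0}$ analysis.
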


\begin{proof}
	As in \cite[Prop. 2.4.1 (a)]{Wanga} we prove the result by induction.

	\begin{Lem}
		Let us assume that $\Sigma$ is not a polysimplex and that for all $\Sigma' \subsetneq \Sigma$ finite convex and for all $n>0$, $H_n(\Sigma',\Gamma)=0$. Then for all $n>0$, $H_n(\Sigma,\Gamma)=0$.
	\end{Lem}

	\begin{proof}
		By \cite[Section 2.5]{meyer_resolutions_2010} we can decompose $\Sigma$ as $\Sigma=\Sigma_{+} \cup \Sigma_{-}$, with $\Sigma_{+}$, $\Sigma_{-}$ and $\Sigma_{0}:-\Sigma_{+}\cap \Sigma_{-}$ are finite convex proper subcomplexes. The cellular chain complexes for these subcomplexes form an exact sequence
		\[
			C_{*}(\Sigma_{0},\Gamma) \rightarrowtail C_{*}(\Sigma_{+},\Gamma) \oplus C_{*}(\Sigma_{-},\Gamma) \twoheadrightarrow C_{*}(\Sigma,\Gamma)
		\]
		which generates a Mayer–Vietoris long exact sequence for their homology groups. Since $\Sigma_{+}$, $\Sigma_{-}$ and $\Sigma_{0}$ satisfies the hypothesis of the Lemma, we get that $H_n(\Sigma,\Gamma)=0$ for $n\geq 2$. By Proposition \ref{proinjisigma} the map $i_{\Sigma_0}^{\Sigma_{+}} : H_0(\Sigma_0,\Gamma) \to H_0(\Sigma_{+},\Gamma)$ is injective, hence $H_1(\Sigma,\Gamma)=0$.
	\end{proof}

	We are left with the case where $\Sigma$ is a single polysimplex. Let $\sigma \subseteq \Sigma$ and $x\in \Sigma_0$. We define an idempotent $e_x^\sigma \in \End(V_\sigma)$ by  $e_x^\sigma:= \varepsilon_{\sigma}^{[x,\sigma]} \circ \varepsilon^{\sigma}_{[x,\sigma]}$. Let $y\in \Sigma_0$ (hence $y$ is adjacent to $x$) and $z\in H(x,y)$. By Lemma \ref{lemadjacent}, $e_x^\sigma e_y^\sigma = \varepsilon_{\sigma}^{[x,y,\sigma]} \circ \varepsilon^{\sigma}_{[x,y,\sigma]} = e_y^\sigma e_x^\sigma$ and $e_x^\sigma e_z^\sigma e_y^\sigma = \varepsilon_{\sigma}^{[x,y,z,\sigma]} \circ \varepsilon^{\sigma}_{[x,y,z,\sigma]}=\varepsilon_{\sigma}^{[x,y,\sigma]} \circ \varepsilon^{\sigma}_{[x,y,\sigma]}=e_x^\sigma e_y^\sigma$. These idempotents satisfies the consistency properties, hence we can follow \cite[Section 2.5]{meyer_resolutions_2010}. For $I \subseteq \Sigma_0$, denote by 
	\[
		e_I^{\sigma,0}:= \prod_{x\in I} e_x^\sigma \prod_{x\notin I} (1-e_x^\sigma) \in \End(V_\sigma)
	\]

Let $\Sigma_I$ be the subcomplex of $\Sigma$ spanned by $I$. As in \cite[Section 2.5]{meyer_resolutions_2010} and \cite[Prop. 2.4.1 (a)]{Wanga} we have
	\begin{enumerate}
		\item $e_I^{\sigma,0}(V_\sigma) = 0 $  if $\sigma_0 \subsetneq I$ 
		\item $\Id_{V_\sigma}=\sum_{I\subseteq \Sigma_0} e_I^{\sigma,0} \in \End(V_\sigma)$
		\item  $ e_I^{\sigma,0}  e_J^{\sigma,0}$, if $I\neq J$
		\item $ e_I^{\sigma,0}=0$ if $\Sigma_I$ is not a single face of $\Sigma$
	\end{enumerate}

	Denote by $e_I^{0} := \oplus_{\sigma}e_I^{\sigma,0} \in \End(C_{*}(\Sigma,\Gamma))$ the endomorphism on the chain complex, it commutes with the differential. Thus we get a decomposition
	\[
		C_{*}(\Sigma,\Gamma) = \bigoplus_{I\subseteq \Sigma_0} e_I^{0}(C_{*}(\Sigma,\Gamma)).
	\]
	If $e_I^{0}(C_{*}(\Sigma,\Gamma))\neq 0$ then $I \neq \emptyset$ and $\Sigma_I$ is a single face of $\Sigma$. The chain complex $ e_I^{0}(C_{*}(\Sigma,\Gamma))$ computes the homology of $\Sigma_I$ with constant coefficients in $e_I^{0}(V_{\Sigma_I})$. Since $\Sigma_I$ is contractible, $ e_I^{0}(C_{*}(\Sigma,\Gamma))$ is a resolution of $e_I^{0}(V_{\Sigma_I})$ and we get that $H_n(\Sigma,\Gamma)=0$ for $n\geq 1$.
\end{proof}

We can now prove the first theorem stated in the introduction.

\begin{The}
Let $e$ be a consistent system of idempotents satisfying condition \eqref{eqcondidem} and $\Gamma$ a $e$-coefficient system on $\bt$. Then the chain complex $C_{*}(\bt,\Gamma)$ is exact except in degree 0.
\end{The}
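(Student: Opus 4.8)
The plan is to deduce the statement for the whole building $\bt$ from the vanishing already established on finite convex subcomplexes (Lemma~\ref{lemHnzero}) by a standard filtered-colimit argument. First I would observe that $\bt$ is the union of its finite convex subcomplexes and that this family is directed under inclusion: given two finite convex subcomplexes $\Sigma_1,\Sigma_2$, their convex hull $\Sigma_3$ is again a finite convex subcomplex containing both. Finiteness of the convex hull of a finite set of polysimplices in a Bruhat–Tits building is classical and is exactly the kind of fact already used throughout \cite{meyer_resolutions_2010}. Moreover, every polysimplex of $\bt$ lies in some member of this family — for instance the convex hull of its closure together with a fixed base vertex — so $\bt$ is the directed union of its finite convex subcomplexes.

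Next I would note that, by the very definition of the cellular chain complexes and of the transition maps $\oplus_{\sigma\in\Sigma'}\Id_{V_\sigma}:C_{*}(\Sigma',\Gamma)\to C_{*}(\Sigma,\Gamma)$ attached to an inclusion $\Sigma'\subseteq\Sigma$, one has an identification of chain complexes
\[
	C_{*}(\bt,\Gamma)=\varinjlim_{\Sigma}C_{*}(\Sigma,\Gamma),
\]
the colimit being taken over the directed set of finite convex subcomplexes $\Sigma$. Indeed, in each homological degree $d$ the group $C_c^{or}(\bt_{d},\Gamma)=\bigoplus_{\dim(\sigma)=d}V_\sigma$ is visibly the colimit of the $C_c^{or}(\Sigma_{d},\Gamma)$, and the differentials are compatible with the inclusions.

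Since filtered colimits of $R$-modules are exact, homology commutes with filtered colimits of chain complexes, so for every $n$ one gets
\[
	H_n(\bt,\Gamma)=\varinjlim_{\Sigma}H_n(\Sigma,\Gamma).
\]
For $n>0$ each term on the right-hand side vanishes by Lemma~\ref{lemHnzero}, hence $H_n(\bt,\Gamma)=0$; this is precisely the assertion that $C_{*}(\bt,\Gamma)$ is exact except in degree $0$.

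The only point needing a little care — and the one I expect to be the main, if modest, obstacle — is the directedness of the system, that is, the finiteness of the convex hull of finitely many polysimplices of $\bt$; everything else is formal. Since this is a standard property of Bruhat–Tits buildings and is already implicitly used in the finite-convex case, no geometric input beyond what has been developed in the previous sections is required.
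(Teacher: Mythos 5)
Your proposal is correct and follows essentially the same route as the paper: both arguments write $C_{*}(\bt,\Gamma)$ as a filtered colimit of the complexes $C_{*}(\Sigma,\Gamma)$ over finite convex subcomplexes (the paper uses an increasing exhausting sequence $(\Sigma_n)$, you use the full directed family, which changes nothing), and both conclude by exactness of filtered colimits of $R$-modules together with Lemma \ref{lemHnzero}. The directedness/finite-convex-hull point you flag is indeed the only geometric input, and it is the same one the paper implicitly uses to produce its exhausting sequence.
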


\begin{proof}
	We follow the proof of \cite[Section 2.5]{meyer_resolutions_2010}. Let $(\Sigma_n)_{n\in \mathbb{Z}}$ be an increasing sequence of  finite convex subcomplexes of $\bt$ such that $\bt=\cup_n \Sigma_n$ and $C_{*}(\bt,\Gamma) = \varinjlim C_{*}(\Sigma_n,\Gamma)$. By Lemma \ref{lemHnzero}, for all $n$ the complex
	\[
		C_{*}(\Sigma_n,\Gamma) \to H_0(\Sigma_n,\Gamma) \to 0
	\]
	is exact. The exactness of inductive limits in the category of $R$-modules tells us that $C_{*}(\bt,\Gamma)$ is a resolution of $\varinjlim H_0(\Sigma_n,\Gamma)$. That is, for all $n\geq 1$, $H_n(\bt,\Gamma)=0$ and $H_0(\bt,\Gamma)=\varinjlim H_0(\Sigma_n,\Gamma)$.
\end{proof}

Let us now move on to the equivalence of categories. Let $x$ be a vertex. If $\Sigma' \subseteq \Sigma$ are two finite convex subcomplexes of $\bt$ containing $x$, then we have a commutative diagram:
\[
	 \xymatrix{
    V_x \ar[r]^-{i_x^{\Sigma'}} \ar[rd]^-{i_x^{\Sigma}} & H_0(\Sigma',\Gamma) \ar[d]^-{i_{\Sigma'}^{\Sigma}} \\
     & H_0(\Sigma,\Gamma)
  }
\]
This induces an injection $i_x : V_x \to H_0(\bt,\Gamma)$.

\begin{Lem}
	\label{lemDiagCommix}

	Let $x$ and $y$ be two adjacent vertices. Then we have a commutative diagram
	\[
	 \xymatrix{
    V_x \ar[r]^-{i_x} \ar[d]^-{\varepsilon_y^x} & H_0(\bt,\Gamma) \ar[d]^-{e_y} \\
    V_y \ar[r]^-{i_y} & H_0(\bt,\Gamma)
  }
\]
\end{Lem}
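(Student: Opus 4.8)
The plan is to unwind the definition of $e_y$ on $H_0(\bt,\Gamma)$ and reduce the statement, via the compatibility of the maps $i_x$, $i_y$ with the transition maps $i_\Sigma^{\Sigma'}$, to a statement inside a single finite convex subcomplex $\Sigma$ containing both $x$ and $y$. Recall that $e_y$ was defined on $H_0(\bt,\Gamma)=\varinjlim_n H_0(\Sigma_n,\Gamma)$ as the limit of the idempotents $e_y^{\Sigma_n}=i_y^{\Sigma_n}\circ p_y^{\Sigma_n}$; concretely, for $\Sigma$ a finite convex subcomplex containing $y$, the square
\[
\xymatrix{
H_0(\Sigma,\Gamma) \ar[r]^-{i_\Sigma^{\bt}} \ar[d]^-{e_y^\Sigma} & H_0(\bt,\Gamma) \ar[d]^-{e_y} \\
H_0(\Sigma,\Gamma) \ar[r]^-{i_\Sigma^{\bt}} & H_0(\bt,\Gamma)
}
\]
commutes. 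So first I would fix a finite convex subcomplex $\Sigma$ of $\bt$ containing both $x$ and $y$ (for instance one containing $H(x,y)$, so that the relevant local maps live inside $\Sigma$), and note that $i_x = i_\Sigma^{\bt}\circ i_x^\Sigma$ and $i_y = i_\Sigma^{\bt}\circ i_y^\Sigma$.

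Then the desired identity $e_y\circ i_x = i_y\circ \varepsilon_y^x$ would follow from the identity
\[
e_y^\Sigma \circ i_x^\Sigma = i_y^\Sigma \circ \varepsilon_y^x : V_x \to H_0(\Sigma,\Gamma)
\]
by postcomposing with $i_\Sigma^{\bt}$. Now I would compute the left-hand side using the definition $e_y^\Sigma = i_y^\Sigma\circ p_y^\Sigma$: we get $e_y^\Sigma\circ i_x^\Sigma = i_y^\Sigma\circ p_y^\Sigma\circ i_x^\Sigma$. By Lemma \ref{lemipcompo} (applied with the vertex $x$ in place of $y$ and the vertex $y$ in place of $x$), the composition $p_y^\Sigma\circ i_x^\Sigma : V_x \to H_0(\Sigma,\Gamma) \to H_0(\Sigma,\underline{V_y})=V_y$ equals $\varepsilon_y^x$. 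Substituting, $e_y^\Sigma\circ i_x^\Sigma = i_y^\Sigma\circ \varepsilon_y^x$, which is exactly what we need.

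The only subtlety, and the step I expect to require the most care, is checking that the construction is independent of the auxiliary choice of $\Sigma$ — that is, that the reduction to a finite convex subcomplex is legitimate and the resulting map into $H_0(\bt,\Gamma)$ does not depend on $\Sigma$. This is handled by the commutative diagrams preceding the lemma (the compatibility of $i_x^{\Sigma'}$ and $i_x^\Sigma$ via $i_{\Sigma'}^\Sigma$, which is how $i_x$ was defined in the first place) together with the analogous compatibility for $e_y^\Sigma$ coming from Proposition \ref{prousig}; since any two finite convex subcomplexes containing $x$, $y$ are both contained in a larger one, passing to a common refinement shows the identity is independent of $\Sigma$, and taking the colimit yields the statement on $H_0(\bt,\Gamma)$. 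Everything else is a direct application of Lemma \ref{lemipcompo} and the definitions.
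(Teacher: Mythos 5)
There is a genuine gap, and it lies in your very first step: you take as a ``definition'' that $e_y$ on $H_0(\bt,\Gamma)$ is the colimit of the homologically constructed idempotents $e_y^{\Sigma_n}=i_y^{\Sigma_n}\circ p_y^{\Sigma_n}$. That is not what $e_y$ means in the statement. Here $H_0(\bt,\Gamma)$ is a smooth $RG$-module, and $e_y$ is the Hecke-algebra idempotent from the consistent system $e$, acting through the $G$-module structure. This reading is forced by how the lemma is used: Theorem \ref{theEsssurj} compares $\Gamma$ with $\sigma\mapsto e_\sigma(H_0(\bt,\Gamma))$, i.e.\ with $\Gamma(V)$ for $V=H_0(\bt,\Gamma)$, and in $\Gamma(V)$ the idempotents act via $\hecke{G}{R}$; the subsequent computation there uses the consistency relations $e_\sigma e_{z_1}e_y=\cdots$ of Definition \ref{defcoherent}, which are identities in the Hecke algebra. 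Nowhere in the paper is the Hecke action of $e_y$ on $H_0(\bt,\Gamma)$ identified with $\varinjlim e_y^{\Sigma_n}$ prior to this lemma; the operators $e_y^\Sigma$ are purely auxiliary (indeed $H_0(\Sigma,\Gamma)$ for finite $\Sigma$ is not a $G$-module, so the Hecke element $e_y$ has no evident action on it).

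Once $e_y$ is read correctly, your argument becomes circular: the commutative square $e_y\circ i_\Sigma^{\bt}=i_\Sigma^{\bt}\circ e_y^\Sigma$ that you invoke, evaluated on $i_x^\Sigma(V_x)$ and combined with Lemma \ref{lemipcompo}, \emph{is} the assertion $e_y\circ i_x=i_y\circ\varepsilon_y^x$, i.e.\ the lemma itself; and since the $i_x^\Sigma(V_x)$ generate $H_0(\Sigma,\Gamma)$, proving that square is equivalent to proving the lemma. The actual content, which your proposal never touches, is the bridge between the Hecke action and the homological local maps. The paper supplies it by an explicit chain-level computation: for $a\in V_x$ one exhibits a $1$-chain $b$ supported on the edges $[z_i,z_{i+1}]$ of a minimal vertex path in $[x,y]$, with $b_{[z_i,z_{i+1}]}=\varepsilon^{[x,y]}_{[z_i,z_{i+1}]}\circ\varepsilon^x_{[x,y]}(a)$, and checks $\partial(b)=e_y(i'_x(a))-i'_y(\varepsilon_y^x(a))$; the crucial link is the identity $\varphi^{[x,z_1]}_x(b_{[x,z_1]})=\varepsilon^{[x,y]}_x\circ\varepsilon^x_{[x,y]}(a)=e_{[x,y]}(a)=e_ye_x(a)=e_y(a)$, which uses that in a $e$-coefficient system the composite $\varepsilon^{[x,y]}_x\circ\varepsilon^x_{[x,y]}$ is precisely multiplication by the Hecke idempotent $e_{[x,y]}$ on $V_x$, together with $e_{[x,y]}=e_ye_x$. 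Without some such step relating $\varepsilon$'s to the Hecke idempotents, the statement cannot be reached.
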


\begin{proof}
	We follow {\cite[Lem. 2.4.2]{Wanga}} and adapt the proof as in Proposition \ref{prosystconst}. Let $i_x' : V_x \to V_x \oplus V_y \oplus \bigoplus_{s \in \bts \setminus \{x,y\}} V_s$ and $i_y' : V_y \to V_x \oplus V_y \oplus \bigoplus_{s \in \bts \setminus \{x,y\}} V_s$ be the natural embeddings. Let $a \in V_x$. We want to show that $e_y(i'_x(a))-i'_y(\varepsilon^x_y(a)) \in \partial(\bigoplus_{\sigma \in \bt, \dim(\sigma)=1} V_\sigma)$. Let $z_0:=x,z_1,\cdots,z_r:=y$ be a sequence of vertices of $[x,y]$ such that for all $0 \leq i \leq r-1$, $\dim([z_i,z_{i+1}])=1$, and $r$ is minimal. Now we define an element $b \in \oplus_{\sigma \in \bt, \dim(\sigma)=1}V_\sigma$ by $b_{[z_i,z_{i+1}]}:= \varepsilon^{[x,y]}_{[z_i,z_{i+1}]} \circ \varepsilon^{x}_{[x,y]} (a) \in V_{[z_i,z_{i+1}]}$ and 0 elsewhere. We are using repeatedly Lemma \ref{proEpsilonCompo}, 
	\begin{align*}
		\varphi^{[x,z_1]}_x(b_{[x,z_1]}) &=\varepsilon^{[x,z_1]}_x  \circ  \varepsilon^{[x,y]}_{[x,z_{1}]} \circ \varepsilon^{x}_{[x,y]} (a) = \varepsilon^{[x,y]}_{x} \circ \varepsilon^{x}_{[x,y]} (a) \\
		&= e_{[x,y]}(a)=e_ye_x(a)=e_y(a).
			\end{align*}
	In the same way,
	\[
		\varphi^{[y,z_{r-1}]}_y(b_{[y,z_{r-1}]}) =\varepsilon^{[y,z_{r-1}]}_y  \circ  \varepsilon^{[x,y]}_{[y,z_{r-1}]} \circ \varepsilon^{x}_{[x,y]} (a)   = \varepsilon^{x}_{y} (a).
	\]
	Also, for $1 \leq i \leq r-1$,
	\[
	 	\varphi^{[z_i,z_{i+1}]}_{z_i}(b_{[z_i,z_{i+1}]}) = \varepsilon^{[z_i,z_{i+1}]}_{z_i} \circ \varepsilon^{[x,y]}_{[z_i,z_{i+1}]} \circ \varepsilon^{x}_{[x,y]} (a) = \varepsilon^{[x,y]}_{z_i} \circ \varepsilon^{x}_{[x,y]} (a)
	 \]
	 \[
	 	\varphi^{[z_i,z_{i-1}]}_{z_{i}}(b_{[z_i,z_{i-1}]}) = \varepsilon^{[z_i,z_{i-1}]}_{z_{i}} \circ \varepsilon^{[x,y]}_{[z_i,z_{i-1}]} \circ \varepsilon^{x}_{[x,y]} (a) = \varepsilon^{[x,y]}_{z_{i}} \circ \varepsilon^{x}_{[x,y]} (a)
	 \]
	 therefore 
	 \[
	 	\varphi^{[z_i,z_{i+1}]}_{z_i}(b_{[z_i,z_{i+1}]}) - \varphi^{[z_i,z_{i-1}]}_{z_{i}}(b_{[z_i,z_{i-1}]}) = 0.
	 \]
	 Thus $e_y(i'_x(a))-i'_y(\varepsilon^x_y(a)) = \partial(b).$
\end{proof}

\begin{The}
\label{theEsssurj}
Let $e$ be a consistent system of idempotents satisfying condition \eqref{eqcondidem} and $\Gamma$ a $e$-coefficient system on $\bt$. Then $\Gamma$ is isomorphic to the coefficient system $(\sigma \mapsto e_\sigma(H_0(\bt,\Gamma)))$.
\end{The}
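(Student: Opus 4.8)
The goal is to produce, for a given $e$-coefficient system $\Gamma = (V_\sigma)$, an isomorphism of coefficient systems $\Gamma \cong (\sigma \mapsto e_\sigma(H_0(\bt,\Gamma)))$. The natural candidate map is, for each polysimplex $\sigma$, the composite $V_\sigma \xrightarrow{\varphi_x^\sigma} V_x \xrightarrow{i_x} H_0(\bt,\Gamma)$ followed by projection onto $e_\sigma(H_0(\bt,\Gamma))$, where $x$ is a vertex of $\sigma$; more precisely, since $\Gamma$ is an $e$-coefficient system we have $V_\sigma = e_\sigma(V_x)$, so we should send $V_\sigma = e_\sigma(V_x) \hookrightarrow V_x \xrightarrow{i_x} H_0(\bt,\Gamma)$ and land in $e_\sigma(H_0(\bt,\Gamma))$ because $i_x$ is $e_\sigma$-equivariant (the action of $e_\sigma$ on $V_x$ and on $H_0(\bt,\Gamma)$ being compatible via $i_x$, which is exactly what Lemma \ref{lemDiagCommix} encodes, iterated over the vertices of $\sigma$). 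First I would check that this prescription is independent of the choice of vertex $x \leq \sigma$: two vertices of $\sigma$ are adjacent, and using Lemma \ref{lemDiagCommix} together with $\varepsilon_y^x$ restricting to an isomorphism $e_\sigma(V_x) \cong e_\sigma(V_y)$ when $x,y \leq \sigma$, the two composites agree. Then I would verify that these maps $\psi_\sigma : V_\sigma \to e_\sigma(H_0(\bt,\Gamma))$ commute with the transition maps $\varphi_\tau^\sigma$ for $\tau \leq \sigma$: this reduces, after fixing a vertex $x \leq \tau \leq \sigma$, to the identity $\varphi_\tau^\sigma$ corresponding under the identification $V_\sigma = e_\sigma(V_x)$, $V_\tau = e_\tau(V_x)$ to the inclusion $e_\sigma(V_x) \hookrightarrow e_\tau(V_x)$ (since $e_\sigma e_\tau = e_\sigma$ when $x \leq \tau \leq \sigma$, as $\tau \in H(x,\sigma)$), which is exactly compatible with the projections defining $e_\sigma, e_\tau$ on $H_0(\bt,\Gamma)$. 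Equivariance under $G$ follows similarly from $e_{g\sigma} = g e_\sigma g^{-1}$ and the $G$-equivariance of $H_0(\bt,\Gamma)$.

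**Injectivity.** For fixed $\sigma$ with vertex $x \leq \sigma$, the map $\psi_\sigma$ is the restriction of $i_x$ to $e_\sigma(V_x) \subseteq V_x$. Since $i_x : V_x \to H_0(\bt,\Gamma)$ is injective (established in the excerpt just before Lemma \ref{lemDiagCommix}), $\psi_\sigma$ is injective.

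**Surjectivity.** This is the main point and the expected obstacle. I must show $\psi_\sigma(V_\sigma) = e_\sigma(H_0(\bt,\Gamma))$, equivalently (taking $x \leq \sigma$) that $e_\sigma$ applied to $H_0(\bt,\Gamma)$ already lies in $i_x(e_\sigma(V_x)) = i_x(V_\sigma)$. Here I would use that $H_0(\bt,\Gamma) = \sum_{w \in \bts} i_w(V_w)$ (which follows from $H_0(\bt,\Gamma) = \varinjlim H_0(\Sigma_n,\Gamma)$ and $H_0(\Sigma,\Gamma) = \sum_{w \in \Sigma_0} i_w^\Sigma(V_w)$ used in Proposition \ref{prousig}). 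So it suffices to show that for every vertex $w$ and every $a \in V_w$, $e_\sigma(i_w(a)) \in i_x(V_\sigma)$. Since $e_\sigma = \prod_{x' \leq \sigma, x' \in \bts} e_{x'}$, I would argue using Lemma \ref{lemDiagCommix}: each $e_{x'}$ with $x' \leq \sigma$, applied after bringing the class into $i_y(V_y)$ for a suitable vertex $y$, pushes it to $i_{x'}(\varepsilon^y_{x'}(\cdot)) \in i_{x'}(V_{x'})$; iterating over the vertices of $\sigma$ (which are pairwise adjacent, so Lemma \ref{lemDiagCommix} applies at each step) brings the class into $i_x(V_x)$ and, because all the intermediate $e_{x'}$ are still applied, into $i_x(e_\sigma(V_x)) = i_x(V_\sigma)$. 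One needs to be a little careful that applying $e_{x'}$ then $e_{x''}$ lands in the image of $e_{\{x',x''\}} = e_{x'}e_{x''}$ acting on $V_x$; this follows from Lemma \ref{lemDiagCommix} plus $\varepsilon^y_{x'}$ being compatible with the various $e$-actions, i.e. from the consistency relations satisfied by $e$ on the building (Definition \ref{defcoherent}) transported through the $i_w$. Putting injectivity and surjectivity together with the verified naturality shows the $\psi_\sigma$ assemble into an isomorphism of coefficient systems, proving the theorem; the hard part is the bookkeeping in the surjectivity argument, ensuring that successively applying the vertex-idempotents $e_{x'}$ for $x' \leq \sigma$ both lands inside some $i_x(V_x)$ and retains the "$e_\sigma$-component" so that the limit is exactly $i_x(V_\sigma)$.
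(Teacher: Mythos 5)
Your overall architecture is exactly the paper's: the candidate morphism is $i_\sigma := i_{x|e_\sigma(V_x)}$ for a vertex $x\leq\sigma$, injectivity comes from the injectivity of $i_x$, and surjectivity is reduced, via $H_0(\bt,\Gamma)=\sum_{w\in\bts}i_w(V_w)$, to showing $e_\sigma(i_w(V_w))\subseteq i_x(V_\sigma)$ for every vertex $w$. However, the surjectivity step as you describe it has a genuine gap. You propose to factor $e_\sigma=\prod_{x'\leq\sigma}e_{x'}$ and "iterate over the vertices of $\sigma$, which are pairwise adjacent, so Lemma \ref{lemDiagCommix} applies at each step." But the class you start from lies in $i_w(V_w)$ for an \emph{arbitrary} vertex $w$ of the building, which in general is adjacent to no vertex of $\sigma$; Lemma \ref{lemDiagCommix} only governs $e_y\circ i_x$ for \emph{adjacent} $x,y$, so none of your iterations can even begin. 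The phrase "after bringing the class into $i_y(V_y)$ for a suitable vertex $y$" is precisely the nontrivial content of the proof, and you do not say how to do it.

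The paper fills this hole as follows: choose a chain of vertices $z_0=w,z_1,\dots,z_m=x$ with $z_i$ and $z_{i+1}$ adjacent and $z_{i+1}\in H(z_i,x)$. The consistency relations of Definition \ref{defcoherent} then give $e_\sigma e_w=e_\sigma e_{z_m}\cdots e_{z_1}e_w$ (one inserts the intermediate idempotents one at a time using property (2), since $z_{i+1}\in H(z_i,x)\subseteq H(z_i,\sigma)$). Now Lemma \ref{lemDiagCommix} is applied at each \emph{adjacent} step of this chain: $e_{z_1}(i_w(V_w))=i_{z_1}(\varepsilon^{w}_{z_1}(V_w))\subseteq i_{z_1}(V_{z_1})$, and so on, until the class sits in $i_x(V_x)$; a final application of $e_\sigma$ gives $e_\sigma i_x(V_x)=i_x(e_\sigma V_x)=i_x(V_\sigma)$. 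So the missing idea is not bookkeeping over the vertices of $\sigma$ but the insertion of idempotents along a gallery from $w$ to $x$ inside $H(w,x)$, justified by the consistency axioms; without it the argument does not get off the ground. The rest of your proposal (well-definedness, compatibility with the $\varphi^\sigma_\tau$, injectivity) matches the paper and is fine.
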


\begin{proof}
	The proof is the same as in \cite[Section 2.4]{Wanga}. We recall it here for the reader's convenience. By Lemma \ref{lemDiagCommix}, the morphism $i_x$ has its image in $e_x(H_0(\bt,\Gamma))$. Let $\sigma$ be a polysimplex containing $x$ and $i_\sigma:=i_{x|e_\sigma(V_x)}$. The morphism $i_\sigma$ sends $\varphi^{\sigma}_x(V_\sigma)$ into $e_\sigma(H_0(\bt,\Gamma))$, and is independent of the choice of the vertex $x$ contained in $\sigma$. Thus the morphisms $\{i_\sigma\}_{\sigma \in \bt}$ induce a morphism of coefficient systems
	\[
		\Gamma \to (\sigma \mapsto e_\sigma(H_0(\bt,\Gamma)))_{\sigma \in \bt}.
	\]

	Since $H_0(\bt,\Gamma)= \sum_{y \in \bts} i_y(V_y)$ it is enough to prove that for all $y\in \bts$, $e_\sigma(i_y(V_y)) \subseteq i_x(V_\sigma)$. Let $y \in \bts$. Now take a sequence of vertices $z_0=y, z_1, \cdots, z_m=x$ such that $z_i$ and $z_{i+1}$ are adjacent, and $z_{i+1} \in H(z_i,x)$. Then
	\[
		e_\sigma(i_y(V_y)) = e_\sigma e_y(i_y(V_y)) = e_\sigma e_{z_1}e_y(i_y(V_y)) = \cdots = e_\sigma e_{z_m} \cdots e_{z_1}e_y(i_y(V_y)).
	\]
	By Lemma \ref{lemDiagCommix}
	\begin{align*}
		e_\sigma e_{z_m} \cdots e_{z_1}e_y(i_y(V_y)) &= e_\sigma e_{z_m} \cdots e_{z_2}i_{z_1}(\varepsilon_{z_1}^{y}(V_y)) \subseteq e_\sigma e_{z_m} \cdots e_{z_2}i_{z_1}(V_{z_1}) \\
		&= e_\sigma e_{z_m} \cdots e_{z_2}i_{z_2}(\varepsilon_{z_2}^{z_1}(V_{z_1})) \subseteq e_\sigma e_{z_m} \cdots i_{z_2}(V_{z_2}) \\
		&\subseteq \cdots \subseteq e_\sigma i_x(V_x) = i_x(V_\sigma).
	\end{align*}
\end{proof}

Let $e$ be a system of idempotents and $V$ a smooth $RG$-module. We define a coefficient system $\Gamma(V)$ by $V_\sigma:=e_\sigma V$ and for $\tau \leq \sigma$, $\varphi_{\sigma}^{\tau}$ is the inclusion $V_\sigma \hookrightarrow V_\tau$.

\begin{The}
\label{theeqcat}
Let $e$ be a consistent system of idempotents satisfying the condition \eqref{eqcondidem}. Then the functor
\[\begin{array}{ccc}
 \rep[R][e]{G} & \to & \coef \\
 V & \mapsto & \Gamma(V) \\
\end{array}\]
admits a quasi-inverse $\Gamma \mapsto H_0(\bt,\Gamma)$, hence induces an equivalence of categories.
\end{The}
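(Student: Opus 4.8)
The plan is to exhibit the two functors as mutually quasi-inverse by constructing natural isomorphisms $H_0(\bt,\Gamma(V))\cong V$ for $V\in\rep[R][e]{G}$ and $\Gamma(H_0(\bt,\Gamma))\cong\Gamma$ for $\Gamma\in\coef$; almost all of the needed ingredients are already available. First I would check that $\Gamma\mapsto H_0(\bt,\Gamma)$ really is a functor $\coef\to\rep[R][e]{G}$: a morphism of $e$-coefficient systems is in particular a morphism of $G$-equivariant coefficient systems, so it induces a $G$-equivariant morphism of cellular chain complexes and hence an $RG$-linear map on degree-$0$ homology, with functoriality evident. Smoothness of $H_0(\bt,\Gamma)$ is obtained as in \cite[Section 2.4]{Wanga}, and membership in the Serre subcategory $\rep[R][e]{G}$ follows from the identity $H_0(\bt,\Gamma)=\sum_{y\in\bts}i_y(V_y)$ together with the inclusions $i_y(V_y)\subseteq e_y(H_0(\bt,\Gamma))$ established in the proof of Theorem \ref{theEsssurj}.

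One of the two required natural isomorphisms is then essentially Theorem \ref{theEsssurj}: the maps $\{i_\sigma\}_{\sigma\in\bt}$ constructed there form an isomorphism of coefficient systems $\Gamma\tosim(\sigma\mapsto e_\sigma(H_0(\bt,\Gamma)))=\Gamma(H_0(\bt,\Gamma))$. The only point left to verify is naturality in $\Gamma$, which is routine, since each $i_x$ (hence each $i_\sigma$) is assembled from the tautological inclusions of chain complexes over finite convex subcomplexes of $\bt$, and these commute with any morphism $\Gamma\to\Gamma'$ and with the induced map on $H_0$. For the other isomorphism I would invoke the resolution result: if $V\in\rep[R][e]{G}$ then $\Gamma(V)_\sigma=e_\sigma V$, and by \cite[Thm. 2.4]{meyer_resolutions_2010} the complex $C_*(\bt,\Gamma(V))$ augmented by the summation map $\bigoplus_{x\in\bts}e_xV\to V$, $(v_x)_x\mapsto\sum_x v_x$, is a resolution of $V$; hence the augmentation induces an $RG$-linear isomorphism $H_0(\bt,\Gamma(V))\tosim V$, which is plainly natural in $V$. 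Combining the two isomorphisms yields the equivalence of categories.

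The substantive inputs — acyclicity of $C_*(\bt,\Gamma)$ in positive degrees, the presentation $H_0(\bt,\Gamma)=\sum_{y}i_y(V_y)$, and the essential surjectivity of Theorem \ref{theEsssurj} — have all been proved above, and the Meyer--Solleveld resolution theorem supplies the last piece, so the proof is really bookkeeping. The one place that requires a little care is the first step, confirming that $H_0(\bt,\Gamma)$ is a smooth representation lying in $\rep[R][e]{G}$; but this too is immediate from the material already assembled, so I expect no real obstacle.
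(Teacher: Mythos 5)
Your proposal is correct, and it uses exactly the same two substantive inputs as the paper --- Theorem \ref{theEsssurj} and the resolution theorem \cite[Thm. 2.4]{meyer_resolutions_2010} --- but packages them differently. The paper verifies the standard criterion ``fully faithful and essentially surjective'': essential surjectivity is Theorem \ref{theEsssurj}, injectivity of $\Gamma$ on morphisms follows from $V=\sum_x e_x(V)$, and surjectivity on morphisms is obtained by passing a morphism of coefficient systems to $H_0$ and using the Meyer--Solleveld resolution to identify $H_0(\bt,\Gamma(V))$ with $V$. You instead build the unit and counit explicitly: the counit $\Gamma(H_0(\bt,\Gamma))\cong\Gamma$ is Theorem \ref{theEsssurj} plus a (routine, as you say) naturality check, and the unit $H_0(\bt,\Gamma(V))\cong V$ is the augmentation from the resolution. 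The computations underlying the two versions are the same --- the paper's surjectivity-on-Hom argument is precisely your unit isomorphism in disguise --- so neither buys extra generality. One small merit of your write-up is that you explicitly verify that $H_0(\bt,\Gamma)$ is smooth and lies in the Serre subcategory $\rep[R][e]{G}$ (via $H_0(\bt,\Gamma)=\sum_y i_y(V_y)$ and $i_y(V_y)\subseteq e_y(H_0(\bt,\Gamma))$), a point the paper leaves implicit when it asserts that $\Gamma\mapsto H_0(\bt,\Gamma)$ is a quasi-inverse landing in the right category.
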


\begin{proof}
This is done in \cite[Cor (2.1.11)]{Wanga}. By Theorem \ref{theEsssurj}, the functor $V  \mapsto  \Gamma(V)$ is essentially surjective. Hence, we need to prove that it is fully faithful.

For $V,W \in \rep[R][e]{G}$, $\Gamma$ induces a morphism
\[\begin{array}{ccc}
\Gamma :  \Hom_{RG}(V,W) & \to & \Hom_{\Coef_e}(\Gamma(V),\Gamma(W)) \\
 f & \mapsto & (f_\sigma=f_{|e_\sigma(V)} : V_\sigma \to W_\sigma) \\
\end{array}.\]

Let $f \in \Hom_{RG}(V,W)$ such that $\Gamma(f)=0$. Then for all vertex $x$, $f_{|e_x(V)}=0$. Since $V \in \rep[R][e]{G}$, $V=\sum_{x \in \bts} e_x(V)$ and $f=0$. Hence the injectivity of $\Gamma$. For the surjectivity, let $(g_\sigma)_{\sigma \in \bt} \in \Hom_{\Coef_e}(\Gamma(V),\Gamma(W))$. We get a morphism of complexes
\[
	g : C_{*}(\bt,\Gamma(V)) \to C_{*}(\bt,\Gamma(W)).
\]

By \cite[Thm. 2.4]{meyer_resolutions_2010} $C_{*}(\bt,\Gamma(V)) \to V \to 0$ (resp. $C_{*}(\bt,\Gamma(W)) \to W \to 0$) is a resolution of $V$ (resp. W). Hence $H_0(g)$ induces a morphism of $RG$-modules
\[
	H_0(g) : V \to W
\]

By definition, for every $x \in \bts$, we have $H_0(g)_{|e_x(V)}=g_x$. Hence for every $\sigma \in \bt$ containing $x$, $H_0(g)_{|e_\sigma(V)}=(H_0(g)_{|e_x(V)})_{|e_\sigma(V)}=g_{x|e_\sigma(V)}=g_\sigma$. And we have the surjectivity.
\end{proof}

\bibliographystyle{hep}
\bibliography{biblio}
\end{document}